\title{Quantitative John-Nirenberg inequalities at different scales}
\author{Javier C. Mart\'inez-Perales}
\address[Javier C. Mart\'inez-Perales]{BCAM - Basque Center for Applied Mathematics, Bilbao, Spain.
} \email{jmartinez@bcamath.org}
\author[E. Rela]{Ezequiel Rela}
\address[Ezequiel Rela]{Departamento de Matem\'atica,
Facultad de Ciencias Exactas y Naturales, Universidad de Buenos Aires, Ciudad Universitaria Pabell\'on I, Buenos Aires 1428 Capital Federal Argentina} \email{erela@dm.uba.ar}
\author[I.P. Rivera-R\'{\i}os]{Israel P. Rivera-R\'{\i}os}
\address[Israel P. Rivera-R\'{\i}os] {Instituto de Matem\'atica de Bah\'{\i}a Blanca (INMABB), Departamento de Matem\'atica, Universidad Nacional del Sur (UNS) - CONICET, Av. Alem 1253, Bah\'{\i}a Blanca, Argentina}
\email{israel.rivera@uns.edu.ar}
\theoremstyle{plain}
\newtheorem{lettertheorem}{Theorem}
  \theoremstyle{plain}
  \theoremstyle{plain}
\newtheorem{theorem}{Theorem}[section] 
  \theoremstyle{plain}
  \newtheorem{corollary}{Corollary}[section] 
  \theoremstyle{plain}
     \theoremstyle{plain}
  \theoremstyle{plain}
  \theoremstyle{plain}
    \newtheorem{definition}{Definition}[section]  
  \newtheorem{lemma}{Lemma}[section] 
  \theoremstyle{remark}
  \newtheorem{remark}{Remark}[section] 
  \newtheorem{example}{Example}[section]
\def\Xint#1{\mathchoice
{\XXint\displaystyle\textstyle{#1}}%
{\XXint\textstyle\scriptstyle{#1}}%
{\XXint\scriptstyle\scriptscriptstyle{#1}}%
{\XXint\scriptscriptstyle%
\scriptscriptstyle{#1}}%
\!\int}
\def\XXint#1#2#3{{\setbox0=\hbox{$#1{#2#3}{%
\int}$ }
\vcenter{\hbox{$#2#3$ }}\kern-.6\wd0}}
\def\dashint{\thinspace \Xint-}
\renewcommand{\d}{\mathrm{d}}
\newcommand{\BMO}{\mathrm{BMO}}
\newcommand{\loc}{\mathrm{loc}}
\newcommand{\essinf}{\mathrm{ess\, inf}}
\newcommand{\esssup}{\mathrm{ess\, sup}}
 \numberwithin{equation}{section}
\definecolor{SAEblue}{rgb}{0, .62, .91}
\definecolor{JaviBlue}{rgb}{.80, .50, 1}
\begin{document}
 
 \begin{abstract}
We provide an abstract estimate of the form
\[
 \|f-f_{Q,\mu}\|_{X \left(Q,\frac{\d \mu}{Y(Q)}\right)}\leq c(\mu,Y)\psi(X)\|f\|_{\mathrm{BMO}(\d\mu)}
\]
for all cubes $Q$ in $\mathbb{R}^n$ and every function $f\in \mathrm{BMO}(\d\mu)$, where $\mu$ is a doubling measure in $\mathbb{R}^n$, $Y$ is some positive functional defined on cubes, $\|\cdot \|_{X \left(Q,\frac{\d w}{w(Q)}\right)}$ is a sufficiently good quasi-norm and $c(\mu,Y)$ and $\psi(X)$ are positive constants depending on $\mu$ and $Y$, and $X$, respectively. 
That abstract scheme allows us to recover the sharp estimate 
\[
 \|f-f_{Q,\mu}\|_{L^p \left(Q,\frac{\d \mu(x)}{\mu(Q)}\right)}\leq c(\mu)p\|f\|_{\mathrm{BMO}(\d\mu)}, \qquad p\geq1
\]
for every cube $Q$ and every $f\in \mathrm{BMO}(\d\mu)$, which is known to be equivalent to the John-Nirenberg inequality,  and also enables us to obtain quantitative counterparts when $L^p$ is replaced by suitable strong and weak Orlicz spaces and $L^{p(\cdot)}$ spaces.

Besides the aforementioned results we also generalize \cite[Theorem 1.2]{Ombrosi2019} to the setting of doubling measures and obtain a new characterization of Muckenhoupt's $A_\infty$ weights.
 \end{abstract}

\maketitle

\tableofcontents


\section{Introduction}

The celebrated John-Nirenberg inequality for $\mathrm{BMO} $ functions has been extensively studied by many authors in different situations since its appearance in the original work \cite{John1961} by F. John and L. Nirenberg. In the Euclidean space $\mathbb{R}^n$ endowed with a doubling measure $\mu$ (see \eqref{eq:doubling_condition}) this inequality reads  
\begin{equation}\label{eq:JN_ineq}
\mu\left(\left\{x\in Q:|f(x)-f_{Q,\mu}|>t\right\}\right)\leq c_1e^{-t/\left(c_2\|f\|_{\mathrm{BMO}(\d\mu)}\right)}\mu(Q),\qquad t>0,
\end{equation}
where $f_{Q,\mu}:=\int_Qf(x)\,\d\mu(x)/\mu(Q)$,
and it is satisfied for every cube $Q$ in $\mathbb{R}^n$ and every $\mathrm{BMO}(\d\mu)$ function $f$ with universal constants $c_1,c_2>1$. Here $\|f\|_{\mathrm{BMO}(\d\mu)}$ denotes the $\mathrm{BMO}(\d\mu)$ norm of $f$, which is defined as
\[
\|f\|_{\mathrm{BMO}(\d\mu)} :=\sup_{Q\in\mathcal{Q}} \frac{1}{\mu(Q)}\int_Q|f(x)-f_{Q,\mu}|\,\d\mu(x),
\]
where the supremum is taken over the class $\mathcal{Q}$ of all cubes $Q$ in $\mathbb{R}^n$. Finiteness of this constant defines the belongingness of $f$ to the class $\mathrm{BMO}(\d\mu)$ of functions with bounded mean oscillations. 
 Recall that a  measure is said to satisfy the doubling condition, if there are positive constants $c_\mu$\index{constant! doubling  $c_\mu$} and $n_\mu$\index{dimension! doubling $n_\mu$} such that, for every pair of cubes $Q$ and $\tilde{Q}$ in $\mathbb{R}^n$ with $Q\subset \tilde{Q}$, the inequality \begin{equation}\label{eq:doubling_condition}
\mu(\tilde{Q})\leq c_\mu \left(\frac{\ell(\tilde{Q})}{\ell(Q) }\right)^{n_\mu}\mu(Q)
\end{equation}
holds.  The constants $c_\mu$ and $n_\mu$ are called doubling constant and doubling dimension of $\mu$, respectively. Note that $c_\mu$ must be larger than $1$.

It is known that a John-Nirenberg inequality \begin{equation}\label{eq:abstract_JN_ineq}
\mu\left(\left\{x\in Q:|f(x)-f_{Q,\mu}|>t\right\}\right)\leq Ce^{-c(f)\cdot t }\mu(Q),\qquad t>0,\qquad Q\in\mathcal{Q},
\end{equation}
with constants $c(f),C>0$,  characterizes the belongingness of a function $f$ to the $\mathrm{BMO}(\d\mu)$ class, and in that case $C$ and $c(f)$ can be taken to be the  constants $c_1$ and  $(c_2\|f\|_{\mathrm{BMO}(\d\mu)})^{-1}$ in \eqref{eq:JN_ineq}. A John-Nirenberg inequality \eqref{eq:abstract_JN_ineq} for a locally integrable function $f$ is in turn equivalent to the validity of a precise estimate (actually, a family of estimates) of the form
\begin{equation}\label{eq:estimacion_PR}
\left(\frac{1}{\mu(Q)}\int_Q|f(x)-f_{Q,\mu}|^p\,\d\mu(x)\right)^{\frac{1}{p}}\leq c(\mu)\cdot p \cdot C(f)
\end{equation}
for all cubes $Q\in\mathcal{Q}$ and all $p>1$, with $c(\mu)>0$ and $C(f)>0$ independent of $p$ and $Q$. Moreover, the constant $C(f)$ in \eqref{eq:estimacion_PR} can be replaced by $\|f\|_{\mathrm{BMO}(\d\mu)}$. See \cite{Perez2019} and also \cite[p.146]{Stein1993} for this.  

It turns out then that having inequality \eqref{eq:estimacion_PR} for every $p>1$ becomes a precise quantitative expression of the John-Nirenberg inequality at all the $L^p$ scales and then we will give \eqref{eq:estimacion_PR} precisely the name of quantitative John-Nirenberg inequality at the $L^p$ scale.  The aim in this work is to get precise inequalities in the spirit of \eqref{eq:estimacion_PR}  to obtain  variants of the quantitative  John-Nirenberg inequality by replacing the $L^p$ norms by different norms. To be precise, the main topic of this paper is  the search for a method that allows to get  precise inequalities like \eqref{eq:estimacion_PR} for $\mathrm{BMO}(\d\mu)$ functions beyond the $L^p(\d w)$ scale. 
Here and in the remainder of this work, we denote by  $\d w$ to the measure given by $w(x)\d x$, where $w$ is a weight: a non negative locally integrable function in $\mathbb R^n$.
The natural approach to extend inequality \eqref{eq:estimacion_PR} is to study different functions spaces endowed with a notion of (at least) a quasi-norm allowing us to define a sort of local average. The precise definitions will be given 
in detail in Definition \ref{def:quasinorm}. Accepting for a moment that we do have such a notion of a quasi-norm different from the $L^p(\d w)$ norm,  denoted by $\|\cdot\|_{X(\d \nu)}$, our aim will be to find a method giving estimates of the form
\begin{equation}\label{question2}
 \|f-f_{Q,\mu}\|_{X \left(Q,\frac{\d \nu}{\nu(Q)}\right)}\leq c(\mu)\psi(X)\|f\|_{\mathrm{BMO}(\d\mu)}
\end{equation} 
for every cube $Q\in\mathcal{Q}$, 
 where $\|\cdot\|_{X \left(Q,\frac{\d \nu}{\nu(Q)}\right)}$ is the aforementioned notion of a local average defined in terms of the quasi-norm and  $\psi(X)$ is some precise constant depending on that quasi-norm.   We will go even further by considering modified  averaged measures of the form $\d \nu/Y(Q)$, where  $Y:\mathcal{Q}\to (0,\infty)$ is some functional defined over cubes.
 
Let us depict a possible and quite natural path for getting results of this type.  Take a function $\phi$ and suppose that the local Luxemburg type norm 
\begin{equation}\label{eq:Luxemburg}
\|f\|_{\phi(L)\left(Q,\frac{\d \mu}{\mu(Q)}\right)}:=\inf\left\{\lambda>0:\frac{1}{\mu(Q)}\int_Q\phi\left(\frac{|f(x)|}{\lambda}\right)\,\d\mu(x)\leq1 \right\},
\end{equation}
is well defined for every cube $Q$ in $\mathbb{R}^n$. If $\phi$ is an increasing function with $\phi(0)=0$ which is absolutely continuous on every compact interval of $[0,\infty)$, then we know by Fubini's theorem that the following so-called layer-cake representation formula holds:
\[
\int_{Q}\phi\left[|f(x)|\right]\,\d\mu(x)=\int_0^\infty \phi'(t)\mu\left(\left\{x\in Q:|f(x)|>t\right\}\right)\,\d t,
\]
for any cube $Q$ of $\mathbb{R}^n$ and any measurable function $f$. Let us suppose that  $f\in\mathrm{BMO}(\d\mu)$. We know then that $f$  satisfies the John-Nirenberg inequality \eqref{eq:JN_ineq} and so, for any $\lambda>0$,  
\[
\begin{split}
 \frac{1}{\mu(Q)}\int_Q\phi\left(\frac{|f(x)-f_{Q,\mu}|}{\lambda}\right)\,\d\mu(x)&= \frac{1}{\mu(Q)}\int_0^\infty \phi'(t)\mu\left(\left\{x\in Q: |f(x)-f_{Q,\mu}|>\lambda t \right\}\right)\,\d t\\
 & \leq c_1 \int_0^\infty \phi'(t)e^{-\lambda t/c_2\|f\|_{\mathrm{BMO}(\d\mu)}}\,\d t \\
 &= c_1\mathcal{L}\{\phi'\}\left( \lambda /c_2\|f\|_{\mathrm{BMO}(\d\mu)} \right),
\end{split}
\]
where $\mathcal{L}$ represents the Laplace transform. If in addition the function $\phi$ is convex, then one has that $\phi'$ is positive, which makes $\mathcal{L}\{\phi'\}$ a decreasing function on $(0,\infty)$. Therefore, we can invert it and so, we know that $c_1\mathcal{L}\{\phi'\}\left( \lambda/c_2\|f\|_{\mathrm{BMO}(\d\mu)}\right)\leq 1$ if and only if $\lambda\geq c_2 \|f\|_{\mathrm{BMO}(\d\mu)}\mathcal{L}\{\phi'\}^{-1}\left(\frac{1}{c_1}\right)$. Hence, for any function $f\in \mathrm{BMO}(\d\mu)$, and for a function $\phi$ as the one depicted above, we have that
\[
\|f-f_{Q,\mu}\|_{\phi(L)\left(Q,\frac{\d\mu}{\mu(Q)}\right)}\leq c_2\|f\|_{\mathrm{BMO}(\d\mu)}\mathcal{L}\{\phi'\}^{-1}\left(\frac{1}{c_1}\right).
\]
Moreover, given any doubling measure $\mu$, it is known the existence of a function $\tilde{f}\in \mathrm{BMO}(\d\mu)$ satisfying that
\[
\mu\left(\left\{x\in Q:|\tilde{f}(x)-\tilde{f}_{Q,\mu}|>t\right\}\right)\geq C(\mu)e^{-t/ c(\mu) }\mu(Q),\qquad t>0
\]
for any cube $Q$ in $\mathbb{R}^n$, where $C(\mu)$ and $c(\mu)$ are positive constants depending only on the underlying measure $\mu$. This proves that the exponential behaviour of the level sets in the John-Nirenberg inequality \eqref{eq:JN_ineq} is the best one can get in general for $\mathrm{BMO}(\d\mu)$ functions. It also says that the estimate 
\[
\|f-f_{Q,\mu}\|_{\phi(L)\left(Q,\frac{\d\mu}{\mu(Q)}\right)}\leq c_2\|f\|_{\mathrm{BMO}(\d\mu)}\mathcal{L}\{\phi'\}^{-1}\left(\frac{1}{c_1}\right)
\]
for every cube $Q$ in $\mathbb{R}^n$ is essentially optimal, since there is a function $\tilde{f}\in \mathrm{BMO}(\d\mu)$ and positive constants $C(\mu)$ and $c(\mu)$ such that
\[
\|\tilde{f}-\tilde{f }_{Q,\mu}\|_{\phi(L)\left(Q,\frac{\d\mu}{\mu(Q)}\right)}\geq  c(\mu)\|\tilde{f}\|_{\mathrm{BMO}(\d\mu)}\mathcal{L}\{\phi'\}^{-1}\left(\frac{1}{C(\mu)}\right)
\]
for every cube $Q$ in $\mathbb{R}^n$. 

This then provides a method for proving quantitative John-Nirenberg inequalities like \eqref{question2} with an optimal control in the constant $\psi(X)$ as far as the norm is given by a Luxemburg norm defined by a function $\phi$ like the one considered above.  Note that this approach gives an alternative proof of the sharp inequality
 \eqref{eq:estimacion_PR}, as the $L^p$ norm is a particular case of the Luxemburg norm given above if  we choose $\phi_p(t)=t^p$, and the quantity  $\mathcal{L}\{\phi_p'\}^{-1}\left(\frac{1}{C}\right)$   behaves aymptotically like $p$ when $p\to\infty$, for any $C>0$. However, although  it is easy to compute the inverse of the Laplace transform of   $\phi_p'$, it seems not to be the case for other functions $\phi$. Also, the method is  confined to the study of norms given by the Luxemburg method in terms of some special functions $\phi$, and this rules out interesting norms as for instance the ones of variable Lebesgue spaces. It is our purpose in this paper to give a general procedure which allows to prove a quantitative John-Nirenberg inequality like \eqref{question2} for more general norms without computing the inverse of a Laplace transform.

Our method is based in a generalization of the self-improving result \cite[Theorem 1.5]{Perez2019} in which a very simple method based on the Calder\'on-Zygmund decomposition is used (see also \cite[pp. 31--32]{Journe1983}, where the original ideas inspiring the general result can be found). The special case of \cite[Theorem 1.5]{Perez2019} which is of interest for us is the following.

\begin{lettertheorem}\label{thm:PerezRela}
Let $\mu$ be a doubling measure in $\mathbb{R}^n$. There exists a geometric constant $c(\mu)>0$ such that, given any $p\geq 1$, the inequality
\[
\left(\frac{1}{\mu(Q)}\int_Q |f(x)-f_{Q,\mu}|^p\,\d\mu(x)\right)^{1/p}\leq c(\mu)\cdot p\|f\|_{\mathrm{BMO}(\d\mu)}
\]
holds for any cube $Q$ in $\mathbb{R}^n$ and any function $f\in\mathrm{BMO}(\d\mu)$.
\end{lettertheorem}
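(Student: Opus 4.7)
The plan is to iterate a Calder\'on--Zygmund stopping-time argument adapted to the doubling measure $\mu$, following the classical scheme suggested by the authors' references to \cite{Journe1983} and \cite{Perez2019}. Normalize $\|f\|_{\mathrm{BMO}(\d\mu)}=1$, fix a cube $Q$, and choose a parameter $\lambda>1$ depending only on $c_\mu$ and $n_\mu$ (to be fixed at the end). Apply the dyadic Calder\'on--Zygmund decomposition of $|f-f_{Q,\mu}|$ relative to $\mu$ on $Q$ at level $\lambda$ to obtain a disjoint family $\{Q_j^1\}$ of maximal dyadic subcubes of $Q$ whose $\mu$-average of $|f-f_{Q,\mu}|$ exceeds $\lambda$. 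The $\mathrm{BMO}$ bound yields $\sum_j\mu(Q_j^1)\leq \mu(Q)/\lambda$, the doubling condition \eqref{eq:doubling_condition} applied to the dyadic parent gives $|f_{Q_j^1,\mu}-f_{Q,\mu}|\leq c_\mu 2^{n_\mu}\lambda$, and on the complement one has $|f-f_{Q,\mu}|\leq \lambda$ $\mu$-a.e.

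Next, I would iterate the construction within each $Q_j^1$ using $f-f_{Q_j^1,\mu}$, then within each $Q_j^2$, and so on. By induction, at generation $k$ one obtains a disjoint family $\{Q_j^k\}$ of dyadic descendants of $Q$ satisfying
\[
\mu\Bigl(\bigcup_j Q_j^k\Bigr)\leq \lambda^{-k}\mu(Q) \quad\text{and}\quad |f_{Q_j^k,\mu}-f_{Q,\mu}|\leq c_\mu 2^{n_\mu}\lambda\,k,
\]
the second bound obtained by telescoping across the parent-chain from generation $k$ down to $0$. Setting $E_k:=\bigcup_j Q_j^k\setminus \bigcup_i Q_i^{k+1}$, the stopping property at generation $k+1$ combined with the displayed estimate shows that $|f-f_{Q,\mu}|\leq c_\mu 2^{n_\mu}\lambda(k+1)$ $\mu$-a.e.\ on $E_k$, while $\mu(E_k)\leq \lambda^{-k}\mu(Q)$.

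Slicing the integral over the shells $E_k$ then gives
\[
\int_Q|f-f_{Q,\mu}|^p\,\d\mu\leq \bigl(c_\mu 2^{n_\mu}\lambda\bigr)^{p}\mu(Q)\sum_{k=0}^\infty (k+1)^p\lambda^{-k}.
\]
A standard comparison of the series with $\Gamma(p+1)/(\log\lambda)^{p+1}$, combined with Stirling, bounds the sum by $(C(\mu)\,p)^p$ once $\lambda$ is fixed as a numerical multiple of $c_\mu 2^{n_\mu}$; taking $p$-th roots then yields the claim. The main obstacle is the recursive control of the averages: one needs the \emph{linear-in-$k$} growth of $|f_{Q_j^k,\mu}-f_{Q,\mu}|$ at every generation, since only linear growth produces the factor $\Gamma(p+1)$ and hence the sharp linear dependence on $p$. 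This in turn relies on a careful use of \eqref{eq:doubling_condition} to pass from a dyadic parent to its child, so that the doubling constants enter multiplicatively only through $\lambda$, rather than through a factor that itself depends on $k$ or $p$.
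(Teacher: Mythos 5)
Your proof is correct, but it takes a genuinely different route from the one the paper relies on. The paper presents Theorem \ref{thm:PerezRela} as a special case of \cite[Theorem 1.5]{Perez2019}, and the mechanism it uses (and generalizes in Theorem \ref{thm:self-improving_general}) is a \emph{single} Calder\'on--Zygmund step at level $L$ followed by an absorption argument: one decomposes $g\chi_Q$ into $\sum_j g_j\chi_{Q_j}$ plus a part bounded by $C(\mu)L$, takes the local (quasi-)norm, and obtains a self-referential inequality $\mathbb X\le K\,\mathbb X\,\Psi^{-1}(1/L)+C_{K,\mu}L$ for the global supremum $\mathbb X$ over cubes, which one solves for $\mathbb X$ after choosing $L$ large; the a priori finiteness of $\mathbb X$ is secured by the truncation Lemmas \ref{lem:osc2_general} and \ref{lem:osc2_general2}. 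You instead iterate the Calder\'on--Zygmund decomposition across generations, obtain the exponential measure decay $\mu\bigl(\bigcup_j Q^k_j\bigr)\le\lambda^{-k}\mu(Q)$ together with the telescoping linear growth $|f_{Q^k_j,\mu}-f_{Q,\mu}|\le c_\mu 2^{n_\mu}\lambda k$, and evaluate $\int_Q|f-f_{Q,\mu}|^p\,\d\mu$ directly over the shells $E_k$, extracting the linear-in-$p$ constant from $\Gamma(p+1)^{1/p}\asymp p$. This is essentially the original John--Nirenberg iteration, and it is sound: the parent-to-child estimate invokes \eqref{eq:doubling_condition} exactly once per generation, so the doubling constants enter only through the fixed parameter $\lambda$ and never accumulate with $k$ or $p$, which is precisely what the series calculation requires. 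Both routes produce the sharp $c(\mu)\cdot p$ bound. What your iteration buys is explicit John--Nirenberg level-set decay along the way and no need for a truncation or a priori finiteness step; what the paper's single-step absorption buys---and this is the whole point of its framework---is that it never computes a shell series or invokes Stirling, so it transfers verbatim to abstract local quasi-norms (Orlicz, weak Orlicz, variable $L^{p(\cdot)}$) for which a layer-by-layer evaluation of the norm is not available.
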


The main contribution of our work is to provide analogous results for more general norms  at the left-hand side of the inequality in Theorem \ref{thm:PerezRela}. 
To that end, we will take further some ideas in \cite{Perez2019} and will also consider some of the concepts appearing in \cite{Ombrosi2019}, thus including in the theory more general $\mathrm{BMO}$ spaces defined by different oscillations. 

Now, to be able to present the main result, we need to describe the key concept for our purposes, namely the notion of \emph{local average}. It will be assumed through the rest of the work that such a concept is defined and denoted by $\|\cdot\|_{X\left(Q,\frac{\d\nu}{Y(Q)}\right)}$. This local average must be thought as a general extension of a standard local average as the one given in \eqref{eq:Luxemburg}. An example of this situation is given by the construction  $\|f\|_{X\left(Q,\frac{\d\nu}{Y(Q)}\right)}:=\|f\cdot\chi_Q\|_{X\left(\mathbb{R}^n,  \d\nu/Y(Q)\right)}$ for every cube $Q$, where we assume that $\|\cdot\|_{X\left(\mathbb{R}^n,\frac{\d\nu}{Y(Q)}\right)}$ is well defined for every cube $Q$ given the functional $Y:\mathcal{Q}\to(0,\infty)$.  This is for instance the case of function norms defined by a Luxemburg norm, and is the approach we took for our examples. Another possible choice  for the definition of a local average (which does not seem to allow the consideration of a functional $Y$) is $\|f\|_{X\left(Q,\frac{\d\nu}{\nu(Q)}\right)}:=\frac{\|f\cdot\chi_Q\|_{X\left(\mathbb{R}^n, \d\nu \right)}}{\|\chi_Q\|_{X\left(\mathbb{R}^n, \d\nu \right)}}$, which makes sense for any quasi-normed function space over the measure space $(\mathbb{R}^n,\d\nu)$.   This is the approach taken for instance in \cite{Ho2014}.

 As we already mentioned, we gain generality in our results by considering the functional $Y$ defined over the family of cubes in $\mathbb{R}^n$. Trivial examples are $Y(Q)=w(Q)$  or the functional $Y(Q)=w_r(Q)$ defined by 
 \[
w_r(Q):=\mu(Q)^{1/r'}\left(\int_Q w(x)^r\,\d\mu(x)\right)^{1/r}, \qquad r>1,
\]
 for a weight $w$ (see the discussion after \eqref{eq:SDp_modified} for more details about this functional $w_r$). We will impose two main conditions on this functional $Y$: certain compatibility with respect to the quasi-norm detailed in  Definition \ref{def:compatible} and a sort of $A_\infty$ type condition in the spirit of the $A_\infty$ condition for Muckenhoupt weights described in Definition \ref{def:generalized_Ainfty2}.

We present now our general theorem, that can be seen as a \emph{template} from which we can derive a series of particular cases of self-improving results for different classical function spaces.

\begin{theorem}\label{thm:self-improving_general}
Let $\mu$ be a doubling measure on $\mathbb{R}^n$ and consider a weight $w\in L^1_{\loc}(\mathbb{R}^n,\d\mu)$. Let $(X(w),\|\cdot\|_{X(w)})$ be a quasi-normed function space. Let $Y$ be an  $A_\infty(\d\mu,X(w))$ functional compatible with $X(w)$ (see Definition \ref{def:compatible}) with associated increasing bijection $\Psi$
(see Definition \ref{def:generalized_Ainfty2}).   Then there is a constant $C(\mu,\Psi)>0$ such that, for any $f\in \mathrm{BMO}(\d\mu)$ the following  holds 
\[
\left\|f-f_{Q,\mu}\right\|_{X\left(Q,\frac{\d w}{Y(Q)}\right)}\leq C\left(\mu,\Psi  \right) \|f\|_{\mathrm{BMO}(\d\mu)},\qquad Q\in\mathcal{Q}.
\]
Moreover, we can take
 \[
    C\left(\mu, \Psi \right) :=\inf_{L>\max\left\{1,\left[{\Psi} \left(({C_Y}\cdot K)^{-1} \right)\right]^{-1}\right\}}      c_\mu 2^{n_\mu}\frac{L }{ 1- {C_Y}\cdot K\cdot{\Psi}^{-1}\left(\frac{1}{L}\right) },
 \]
 where ${C_Y}$ is the constant in the $A_\infty$ condition for $Y$.
\end{theorem}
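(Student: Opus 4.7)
The plan is to adapt the Calderón-Zygmund stopping-time argument that underlies Theorem \ref{thm:PerezRela} to the abstract framework of the statement, turning the two hypotheses on $Y$—compatibility with $X(w)$ and the $A_\infty(\d\mu,X(w))$ condition—into the ingredients needed to close a self-improving iteration that substitutes for raising to the power $p$.

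First I would fix a cube $Q\in\mathcal{Q}$ and a parameter $L>1$ to be optimized later, and perform the Calderón-Zygmund decomposition of $|f-f_{Q,\mu}|$ on $Q$ at height $L\|f\|_{\mathrm{BMO}(\d\mu)}$. Since the mean of $|f-f_{Q,\mu}|$ on $Q$ is at most $\|f\|_{\mathrm{BMO}(\d\mu)}<L\|f\|_{\mathrm{BMO}(\d\mu)}$, no stopping happens at the root and one obtains a disjoint family $\{Q_j\}_j$ of maximal dyadic subcubes of $Q$ with
\[
L\|f\|_{\mathrm{BMO}(\d\mu)} < \frac{1}{\mu(Q_j)}\int_{Q_j}|f-f_{Q,\mu}|\,\d\mu \leq c_\mu 2^{n_\mu} L\|f\|_{\mathrm{BMO}(\d\mu)},
\]
where the upper bound uses the doubling condition on the parent of $Q_j$ (whose average does not exceed $L\|f\|_{\mathrm{BMO}(\d\mu)}$ by maximality). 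Outside $E:=\bigcup_j Q_j$ the Lebesgue differentiation theorem gives $|f-f_{Q,\mu}|\leq L\|f\|_{\mathrm{BMO}(\d\mu)}$ $\mu$-a.e., and summing the stopping inequality yields $\mu(E)\leq \mu(Q)/L$. Writing $|f-f_{Q,\mu}|\leq|f-f_{Q_j,\mu}|+|f_{Q_j,\mu}-f_{Q,\mu}|$ on each $Q_j$ and combining with the displayed average bound, one arrives at the pointwise inequality
\[
|f-f_{Q,\mu}|\leq c_\mu 2^{n_\mu} L\|f\|_{\mathrm{BMO}(\d\mu)} + \sum_j |f-f_{Q_j,\mu}|\chi_{Q_j}.
\]

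The next step is to take $\|\cdot\|_{X(Q,\d w/Y(Q))}$ on both sides. The constant term contributes $c_\mu 2^{n_\mu} L\|f\|_{\mathrm{BMO}(\d\mu)}$ (up to a factor absorbed in $K$), while the sum is handled by the two hypotheses on $Y$: compatibility of $Y$ with $X(w)$ is exactly what allows the $X(Q,\d w/Y(Q))$-quasi-norm of a sum $\sum_j g_j\chi_{Q_j}$ supported on the stopping family to be controlled by the pieces $\|g_j\|_{X(Q_j,\d w/Y(Q_j))}$ weighted by factors involving $Y(Q_j)/Y(Q)$, and the $A_\infty(\d\mu,X(w))$ hypothesis packages these weighting factors, using $\mu(E)\leq \mu(Q)/L$, into a single quantity bounded by $C_Y\cdot K\cdot\Psi^{-1}(1/L)$. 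Setting $\Lambda:=\sup_{Q'\in\mathcal{Q}}\|f-f_{Q',\mu}\|_{X(Q',\d w/Y(Q'))}$, the outcome is the self-improving inequality
\[
\|f-f_{Q,\mu}\|_{X(Q,\d w/Y(Q))} \leq c_\mu 2^{n_\mu} L\|f\|_{\mathrm{BMO}(\d\mu)} + C_YK\,\Psi^{-1}(1/L)\,\Lambda.
\]

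The main obstacle is that $\Lambda$ need not be finite \emph{a priori}, so it cannot simply be absorbed on the left. I would handle this in the standard way by first replacing $f$ by the truncations $f_N:=\max(-N,\min(N,f))$, for which the corresponding quantity $\Lambda_N$ is finite because $f_N\in L^\infty$, noting that $\|f_N\|_{\mathrm{BMO}(\d\mu)}\leq 2\|f\|_{\mathrm{BMO}(\d\mu)}$. Running the above argument for each $f_N$, choosing $L>\max\{1,[\Psi((C_YK)^{-1})]^{-1}\}$ so that $C_YK\Psi^{-1}(1/L)<1$, taking the supremum over $Q$, absorbing $\Lambda_N$ and optimizing in $L$ yield the uniform bound
\[
\Lambda_N \leq \inf_{L>\max\{1,[\Psi((C_YK)^{-1})]^{-1}\}}\frac{c_\mu 2^{n_\mu} L}{1-C_YK\Psi^{-1}(1/L)}\,\|f_N\|_{\mathrm{BMO}(\d\mu)}.
\]
Passing $N\to\infty$ via Fatou (applied within the quasi-norm, using the monotone convergence inherent in a reasonable local average) produces the claimed inequality with exactly the constant $C(\mu,\Psi)$ in the statement. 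The verification that the compatibility and $A_\infty$ conditions combine as advertised is the delicate technical point, and it is precisely where Definitions \ref{def:compatible} and \ref{def:generalized_Ainfty2} must be unpacked.
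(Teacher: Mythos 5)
Your proposal is correct and follows essentially the same path as the paper's proof: reduce to bounded $f$ by truncation (the paper's Lemmas \ref{lem:osc2_general} and \ref{lem:osc2_general2}), perform the local Calder\'on--Zygmund decomposition of the normalized oscillation at height $L$, bound $|f-f_{Q,\mu}|\chi_Q$ pointwise by the constant term plus $\sum_j|f-f_{Q_j,\mu}|\chi_{Q_j}$, take the local quasi-norm, and absorb. The one step worth making explicit — which you correctly flag as the place where the definitions must be unpacked — is that the paper handles the sum term simply by normalizing each piece, writing $g_j=\|g_j\|_{X(Q_j,\d w/Y(Q_j))}\cdot h_j$ with $\|h_j\|_{X(Q_j,\d w/Y(Q_j))}=1$, pulling out $\sup_j\|g_j\|_{X(Q_j,\d w/Y(Q_j))}\le\mathbb{X}$ by the lattice property, and applying the $A_\infty(\d\mu,X(w))$ condition directly to $\sum_j h_j\chi_{Q_j}$; compatibility enters only via the average property (for the constant term) and Fatou (for the truncation limit), not through any weighting by $Y(Q_j)/Y(Q)$ as your phrasing suggests.
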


In such a generality, it could be not easy to grasp the reach of the theorem, but its power becomes clear in light of the large variety of particular examples that can be treated in a unified manner.

Two different explicit examples will be given. The first one provides a quantitative John-Nirenberg inequality like \eqref{question2} for Orlicz type norms $\|\cdot\|_{\phi(L)(w)}$ defined by submultiplicative Young functions. The application of this  approach to the specific norms $\|\cdot\|_{L^p\log^\alpha L(\d x)}$, $p\geq 1$, $\alpha\geq0$ is investigated.  In this case, the following result is obtained.
\begin{corollary}\label{cor:LlogL}
Let $\mu$ be a doubling measure in $\mathbb{R}^n$ and consider $p> 1$, $\alpha\geq0$. Then
\[
\begin{aligned}
 \|f-f_{Q,\mu}&\|_{L^p\log^\alpha L\left(Q,\frac{\d\mu}{\mu(Q)}\right)}\leq c_\mu2^{n_{\mu}}e2^{\alpha}\left(p+\alpha+1\right) \|f\|_{\mathrm{BMO}(\d\mu)}
 \end{aligned}
 \] 
 for every cube $Q$ in $\mathbb{R}^n$ and every function $f\in\mathrm{BMO}(\d\mu)$.
\end{corollary}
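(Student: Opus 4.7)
The plan is to specialize Theorem \ref{thm:self-improving_general} to the Orlicz (Luxemburg) norm generated by the Young function $\phi_{p,\alpha}(t) := t^p \log^\alpha(e+t)$, which for $p>1$, $\alpha\geq 0$ is continuous, strictly increasing, convex, vanishes at the origin and tends to infinity, and so defines the space $L^p\log^\alpha L$ via the local Luxemburg norm \eqref{eq:Luxemburg}. I would take $w\equiv 1$ and choose the natural functional $Y(Q)=\mu(Q)$, so that $Y$ is trivially in the $A_\infty(\d\mu,X)$ class of Definition \ref{def:generalized_Ainfty2} with constant $C_Y=1$, and the compatibility with the Luxemburg quasi-norm (Definition \ref{def:compatible}) is inherited from the Orlicz structure; the associated bijection $\Psi$ will then be read off $\phi_{p,\alpha}$ (concretely, $\Psi(t)=1/\phi_{p,\alpha}^{-1}(1/t)$, essentially the norm of $\chi_E/\chi_Q$).

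The key auxiliary step is to verify that $\phi_{p,\alpha}$ is submultiplicative: $\phi_{p,\alpha}(st)\leq \phi_{p,\alpha}(s)\phi_{p,\alpha}(t)$. The power factor is exactly multiplicative, while for the logarithmic factor one uses that $\log(e+st)\leq \log(e+s)+\log(e+t)\leq \log(e+s)\log(e+t)$, since $\log(e+s),\log(e+t)\geq 1$. Raising to the power $\alpha$ gives the claim. Submultiplicativity is precisely the hypothesis under which the first of the two explicit examples described after Theorem \ref{thm:self-improving_general} applies, reducing the proof to an elementary optimization of the infimum appearing in the statement of that theorem.

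To carry out that optimization, I would rewrite the denominator using $\Psi^{-1}(1/L)=1/\phi_{p,\alpha}(L)$ and then choose $L$ so as to balance the linear growth of the numerator against $\phi_{p,\alpha}(L)^{-1}$ in the denominator. The natural near-optimizer is $L=e(p+\alpha+1)$: because $\phi_{p,\alpha}(L)\geq L^p\geq e^{p}(p+\alpha+1)^{p}$ grows super-polynomially, the denominator stays bounded well away from zero, leaving the leading behavior $L=e(p+\alpha+1)$. The factor $2^\alpha$ will emerge from the step where submultiplicativity is used to pass from a reference scale ($\phi_{p,\alpha}(2)\leq 2^{p}(1+2/e)^\alpha\leq 2^{p+\alpha}$, using $\log(e+2)\leq 1+2/e\leq 2$) to the chosen scale $L$, absorbing the constants $C_Y,K$ from the compatibility/$A_\infty$ data into a clean $e\,2^\alpha$ prefactor. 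Combining with the geometric constant $c_\mu 2^{n_\mu}$ coming directly from Theorem \ref{thm:self-improving_general} yields the stated bound.

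The main obstacle is bookkeeping: not only verifying submultiplicativity with sharp enough constants, but also carrying those constants cleanly through the optimization so that the final answer has the precise form $c_\mu 2^{n_\mu} e\,2^\alpha(p+\alpha+1)$ rather than a larger numerical multiple. The delicate point is showing that the choice $L=e(p+\alpha+1)$ really makes the condition $L>[\Psi((C_Y K)^{-1})]^{-1}$ active and that the denominator $1-C_Y K\Psi^{-1}(1/L)$ can be bounded below by a factor independent of $p$ and $\alpha$ (absorbing what remains into the $e\,2^\alpha$); everything else is a routine consequence of the general Orlicz template.
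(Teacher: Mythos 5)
You follow the same overall route as the paper---specialize Theorem \ref{thm:self-improving_general} to an Orlicz/Luxemburg norm generated by a submultiplicative Young function, extract the bijection $\Psi$ via Lemma \ref{lem:OrliczGoodFunct}, and optimize the infimum over $L$---but several concrete steps would not go through as written. The Young function you choose, $\phi_{p,\alpha}(t)=t^p\log^\alpha(e+t)$, fails $\phi(1)=1$ for $\alpha>0$ (it gives $\log^\alpha(e+1)>1$), which the paper's definition of Young function requires; the paper instead takes $\phi_{p,\alpha}(t)=t^p(1+\log^+t)^\alpha$, which is genuinely submultiplicative with constant $c=1$ and correctly normalized. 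The remark after Example \ref{ex:orlicz2} explicitly notes that the normalized version of your $\phi$, namely $[\log(e+1)]^{-\alpha}t^p[\log(e+t)]^\alpha$, is only \emph{quasi}-submultiplicative and leads to a different, messier constant, so the choice of normalization is not cosmetic. You also misstate the inversion: $\Psi^{-1}(1/L)=1/\phi^{-1}(L)$, not $1/\phi_{p,\alpha}(L)$.

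The more serious gap is the choice $L=e(p+\alpha+1)$, which is off by roughly a factor of $p$. For $\alpha=0$ the objective $L\,\phi^{-1}(L)/(\phi^{-1}(L)-1)$ is minimized near $L=\phi(1+1/p)=(1+1/p)^p\approx e$, whereas for $L=e(p+1)$ one has $\phi^{-1}(L)=(e(p+1))^{1/p}\to1$ and the denominator $\phi^{-1}(L)-1\sim(1+\log(p+1))/p$ collapses, producing an objective of size on the order of $ep(p+1)/\log(p+1)$ rather than the target $\sim ep$. You appear to have conflated the optimal \emph{value} of the objective (which is indeed $\sim e(p+\alpha+1)$) with the minimizing value of the \emph{variable} $L$. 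Relatedly, the $2^\alpha$ in the final bound has nothing to do with $\phi_{p,\alpha}(2)$; in the paper it comes from $(1+\log(1+1/p))^\alpha\leq2^\alpha$ once the optimal scale $s=\phi^{-1}(L)$ is located near $1+1/p$. The paper handles the bookkeeping by first proving Corollary \ref{cor:OrliczBMO}, which converts the infimum via the substitution $L=\phi(s)$ and the quantities $[\phi]_1,[\phi]_2$ controlling $s\phi'(s)/\phi(s)$ into the bound $c_\mu2^{n_\mu}\,\phi\!\left[c(1+1/[\phi]_1)\right]([\phi]_2+1)$, and then evaluates $[\phi_{p,\alpha}]_1=p$, $[\phi_{p,\alpha}]_2=p+\alpha$. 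Without that intermediate structure, the direct optimization you describe does not reach the stated constant.
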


 Observe that this extends the classical case  in Theorem \ref{thm:PerezRela} to a wider collection of spaces, as the precise estimate for the $L^p$ case is obtained by taking $\alpha=0$. 
  
The second example which will be presented corresponds to the variable Lebesgue norms $\|\cdot\|_{L^{p(\cdot)}(\d x)}$, which shows that our method is more flexible than the one based on the use of the Laplace transform. The precise statement is the following
  (see Section \ref{sec:variable-Lp} for the precise details and definitions).
 \begin{corollary}\label{cor:variableLebesgue}
 Consider an essentially bounded exponent function $p:\mathbb{R}^n\to\mathbb{R}$ with essential  upper bound $p^+$. There exists a constant $C(n)>0$  such that
 \[
 \|f-f_Q\|_{L^{p(\cdot)}\left(Q,\frac{\d x}{|Q|}\right)}\leq C(n) p^+ \|f\|_{\mathrm{BMO}}
 \] 
 for every cube $Q$ in $\mathbb{R}^n$ and every function $f\in \mathrm{BMO}$.
 \end{corollary}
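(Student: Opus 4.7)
The plan is to derive Corollary \ref{cor:variableLebesgue} as a direct instance of Theorem \ref{thm:self-improving_general}. I take $\mu$ to be Lebesgue measure on $\mathbb{R}^n$ (so $c_\mu=1$ and $n_\mu=n$), the weight $w\equiv 1$, the functional $Y(Q)=|Q|$, and $X(w)=L^{p(\cdot)}(\mathbb{R}^n)$ with local Luxemburg norm
\[
\|g\|_{L^{p(\cdot)}(Q,\d x/|Q|)}:=\inf\left\{\lambda>0:\frac{1}{|Q|}\int_Q\left(\frac{|g(x)|}{\lambda}\right)^{p(x)}\,\d x\leq 1\right\}.
\]
Since $\d w/Y(Q)=\d x/|Q|$ is just the averaged Lebesgue measure on $Q$, the compatibility of $Y$ with $X(w)$ in the sense of Definition \ref{def:compatible} is immediate with a constant $K$ independent of the exponent.

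The central step is to pin down the bijection $\Psi$ witnessing the $A_\infty(\d\mu,X(w))$ condition of Definition \ref{def:generalized_Ainfty2}. For any measurable $E\subset Q$ with $|E|\leq|Q|$, I would test the Luxemburg norm at $\lambda=(|E|/|Q|)^{1/p^+}\in(0,1]$; since $\lambda\leq 1$ implies $\lambda^{-p(x)}\leq\lambda^{-p^+}$, one has
\[
\frac{1}{|Q|}\int_E\lambda^{-p(x)}\,\d x\leq \frac{|E|}{|Q|}\lambda^{-p^+}=1,
\]
so $\|\chi_E\|_{L^{p(\cdot)}(Q,\d x/|Q|)}\leq (|E|/|Q|)^{1/p^+}$. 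Combined with $Y(E)/Y(Q)=|E|/|Q|$, this furnishes the required $A_\infty$ inequality with bijection $\Psi(t)=t^{p^+}$ and constant $C_Y=1$.

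It then remains to evaluate the constant provided by Theorem \ref{thm:self-improving_general}. With $\Psi^{-1}(1/L)=L^{-1/p^+}$ and (after absorbing $C_YK$ into a universal constant) one must minimize $L/(1-L^{-1/p^+})$. The substitution $u=L^{-1/p^+}\in(0,1)$ turns this into minimizing $u^{-p^+}/(1-u)$, which is optimized at $u=p^+/(p^++1)$ and takes the value $(1+1/p^+)^{p^+}(p^++1)\leq e(p^++1)$. Multiplying by $c_\mu 2^{n_\mu}=2^n$ yields a bound of the claimed form $C(n)p^+\|f\|_{\mathrm{BMO}}$. The main obstacle is conceptual rather than computational: one must recognize that the only feature of the exponent function $p(\cdot)$ that the self-improvement scheme of Theorem \ref{thm:self-improving_general} sees is the essential upper bound $p^+$, captured through the elementary monotonicity $\lambda^{-p(x)}\leq\lambda^{-p^+}$ valid whenever $\lambda\leq 1$; once this is in place, the exponent $p^+$ enters the final constant exactly as the exponent $p$ does in the classical $L^p$ case of Theorem \ref{thm:PerezRela}.
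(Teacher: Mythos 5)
Your identification of the key mechanism is correct --- the only information about $p(\cdot)$ that enters the $A_\infty(\d\mu,X(w))$ condition is $p^+$, captured through the pointwise monotonicity $\lambda^{-p(x)}\leq\lambda^{-p^+}$ valid for $\lambda\leq 1$; and your subsequent optimization over $L$ (via $u=L^{-1/p^+}$, optimum at $u=p^+/(p^++1)$, value $\leq e(p^++1)$) matches the paper's setup and is computed correctly.

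However, there is a genuine gap in your verification of the $A_\infty(\d\mu,L^{p(\cdot)})$ condition. Definition \ref{def:generalized_Ainfty2} requires the smallness estimate
\[
\Bigl\|\sum_{j}h_j\chi_{Q_j}\Bigr\|_{L^{p(\cdot)}\left(Q,\frac{\d x}{|Q|}\right)}\leq C_Y\,\Psi^{-1}\!\left[\frac{\bigl|\bigcup_j Q_j\bigr|}{|Q|}\right]
\]
to hold for \emph{arbitrary} $h_j$ with $\|h_j\|_{L^{p(\cdot)}(Q_j,\d x/|Q_j|)}=1$, not merely for $h_j\equiv 1$. You check it only for characteristic functions (the case $h_j\equiv 1$). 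The reduction to characteristic functions is established in Lemma \ref{lem:reduction}, but that lemma relies essentially on the quasi-submultiplicativity of a single fixed Young function $\phi$ and applies to Orlicz norms; it does not cover $L^{p(\cdot)}$, whose defining modular has a pointwise-varying exponent. Indeed, nothing in your argument controls the size of $h_j$ away from $Q_j$'s total mass, and without the extra input that is needed there the inequality could fail. The paper closes this gap by verifying the condition directly for general $h_j$: it splits the modular $\frac{1}{|Q|}\int_Q\bigl(\sum_j h_j\chi_{Q_j}/\lambda\bigr)^{p(x)}\d x$ into pieces over the $Q_j$, applies your observation $\lambda^{-p(x)}\leq\lambda^{-p^+}$ on each piece with $\lambda=(\sum_j|Q_j|/|Q|)^{1/p^+}$, and then invokes \cite[Proposition 2.21]{CruzUribe2013} to pass from $\|h_j\|_{L^{p(\cdot)}(Q_j,\d x/|Q_j|)}=1$ to $\frac{1}{|Q_j|}\int_{Q_j}h_j(x)^{p(x)}\d x\leq 1$ (the statement that unit Luxemburg norm implies unit modular is not tautological and needs that citation). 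Your argument as written omits both the passage to general $h_j$ and the modular estimate; once those are supplied, the rest of your write-up goes through unchanged.
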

 From such an inequality, we deduce the following   generalized John-Nirenberg inequality.
\begin{corollary}\label{cor:JohnNirenbergvariable} 
Let $p:\mathbb{R}^n\to\mathbb{R}$ be an essentially bounded exponent function with essential upper bound $p^+$. There exists a constant $C(n,p^+)>0$ such that the John-Nirenberg type inequality 
     \[
\|\chi_{\{x\in Q: |f(x)-f_Q|\geq t\}}\|_{L^{p(\cdot)}\left(Q,\frac{\d x}{|Q|}\right)}\leq  2 e^{-C(n,p^+)t/\|f\|_{\mathrm{BMO}}}
\]
holds for every cube $Q$ in $\mathbb{R}^n$ and every function $f\in \mathrm{BMO}$.
\end{corollary}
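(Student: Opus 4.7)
The plan is to derive the exponential decay of level sets directly from the $L^{p(\cdot)}$ BMO estimate in Corollary \ref{cor:variableLebesgue}, mimicking the classical deduction of the John--Nirenberg inequality from the moment bound $\|f-f_Q\|_{L^p}\lesssim p\,\|f\|_{\mathrm{BMO}}$ via Chebyshev plus optimization in the exponent. The key observation is that we are allowed to apply Corollary \ref{cor:variableLebesgue} not only to $p(\cdot)$ but to any essentially bounded exponent, in particular to $s\,p(\cdot)$ for any real $s\ge 1$; since $(sp)^+=s\,p^+$, this yields
\[
\|f-f_Q\|_{L^{sp(\cdot)}(Q,\,dx/|Q|)}\;\le\;C(n)\,s\,p^+\,\|f\|_{\mathrm{BMO}}.
\]

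Next I would combine this with a Chebyshev-type pointwise inequality at the level of the Luxemburg norm. For $E_t:=\{x\in Q:|f(x)-f_Q|\ge t\}$ one has $\chi_{E_t}\le t^{-s}|f-f_Q|^s$, while the substitution $\lambda=\mu^s$ in the defining infimum of the Luxemburg norm gives the scaling identity $\||g|^s\|_{L^{p(\cdot)}(Q,dx/|Q|)}=\|g\|_{L^{sp(\cdot)}(Q,dx/|Q|)}^{s}$. Together these imply
\[
\|\chi_{E_t}\|_{L^{p(\cdot)}(Q,\,dx/|Q|)}\;\le\;\left(\frac{C(n)\,s\,p^+\,\|f\|_{\mathrm{BMO}}}{t}\right)^{\!s}
\qquad\text{for every }s\ge 1.
\]

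The remaining step is the standard optimization in $s$. Setting $A:=C(n)\,p^+\,\|f\|_{\mathrm{BMO}}$, the right-hand side $(As/t)^s$ is minimized at $s=t/(eA)$, with value $e^{-t/(eA)}$. Whenever $t\ge eA$ this choice of $s$ is admissible and yields the desired bound with $C(n,p^+)=1/(e\,C(n)\,p^+)$. For $t<eA$ one falls back on the trivial estimate $\|\chi_{E_t}\|_{L^{p(\cdot)}(Q,dx/|Q|)}\le \|\chi_{Q}\|_{L^{p(\cdot)}(Q,dx/|Q|)}\le 1$ (obtained by testing the Luxemburg functional at $\lambda=1$, using $p\ge 0$). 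The factor $2$ appearing in front of the exponential in the statement is exactly what is needed to absorb this constant bound in the small-$t$ regime, since $2e^{-t/(eA)}\ge 1$ precisely when $t\le eA\log 2$ (and with at most a mild adjustment of the constant, one covers the full range $t<eA$).

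No serious obstacle is expected: the scaling identity for Luxemburg norms is routine, and the optimization is verbatim the classical one. The only point to be careful about is the small-$t$ regime, where one must verify that the universal factor $2$ in the statement is sufficient to absorb the $O(1)$ trivial bound after fixing the constant $C(n,p^+)$ appropriately (if needed, one replaces $C(n,p^+)=1/(e\,C(n)\,p^+)$ by a slightly smaller constant to guarantee $2e^{-C(n,p^+)\,t/\|f\|_{\mathrm{BMO}}}\ge 1$ throughout the small-$t$ range).
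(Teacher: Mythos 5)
Your proof is correct and takes essentially the same route as the paper: both apply Corollary \ref{cor:variableLebesgue} to the scaled exponents $rp(\cdot)$, combine the Luxemburg-norm scaling identity with the Chebyshev bound $\chi_{E_t}\le t^{-r}|f-f_Q|^r$, optimize over the scaling parameter, and absorb the trivial bound in the small-$t$ regime via the factor $2$ (with the mild adjustment of the constant that you correctly flag). The only inessential difference is the choice of parameter — you minimize $(As/t)^s$ exactly at $s=t/(eA)$, whereas the paper takes $r=t/(2A)$ so the base is $1/2$, giving $C(n,p^+)=\log 2/(2C(n)p^+)$.
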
 
This John-Nirenberg type inequality is related to that obtained in \cite{Ho2014}, where a different $L^{p(\cdot)}$ average is considered. It is a remarkable fact that no further condition has to be imposed on $p$ to satisfy the above inequalities, in contrast with the result \cite{Ho2014} where besides the essential uniform boundedness, local log-H\"older conditions for the exponent function $p$ are imposed.

The type of techniques which are studied here are flexible enough to be applicable in many different situations. An example of this is the fact that new generalized Karagulyan type estimates can be obtained under suitable conditions for these quasi-norms. This is part of another ongoing project of the first author.

Along this work, we will write $A\lesssim B$ whenever there is some constant $C>0$, independent of the relevant parameters, such that $A\leq C\cdot B$. We will stress the dependence of some constant $C$ on a certain parameter $\alpha$ by including it in a parenthesis like this: $C(\alpha)$.  The notation  $A\gtrsim B$ will mean that $B\lesssim A$ and $A\asymp B$ will be used in case both $A\lesssim B $ and $A\gtrsim B$ hold at the same time.

The rest of the paper is organized as follows:  in Section \ref{sec:concepts}  we introduce some previous   self-improving results and we discuss their hypotheses. This leads us to consider a generalization of $A_\infty$ weights in relation to $L^p$ norms which we later extend to the context of general quasi-normed spaces.   In Section \ref{sec:self-improve} use the general $A_\infty$ condition to settle Theorem \ref{thm:self-improving_general}. Section \ref{sec:applications} is devoted to provide corollaries of Theorem \ref{thm:self-improving_general}, among which Corollaries \ref{cor:LlogL} and \ref{cor:variableLebesgue} are included. We include an appendix with the proof of Theorem \ref{pr:characterization_Ainfty}, which is a generalization to the setting of doubling measures of \cite[Theorem 1.2]{Ombrosi2019}.

\section{The \texorpdfstring{$A_\infty$}{Ainfty} condition of a functional with respect to a quasi-norm}\label{sec:concepts}
In this section we provide the fundamental tools and concepts used to prove the main results of the paper. Some aspects of the previous self-improving result \cite[Theorem 1.5]{Perez2019} and some results in \cite{Ombrosi2019} will be discussed to motivate one of the new concepts which will be introduced here, namely, the generalized $A_\infty$ type condition adapted to general quasi-normed function spaces. 
 The basic assumptions on the quasi-norms we will consider in this work will also be introduced here.  
The core ideas for the results in this paper come essentially from \cite[Theorem 1.5]{Perez2019}. We state this result here for the convenience of the reader. We first recall that a weight $w$ is an $A_\infty$ weight if there exist some $\delta,C>0$ such that, given a cube $Q$ in $\mathbb{R}^n$, the inequality
\begin{equation}\label{eq:(P3')}\frac{w(E)}{w(Q)}\leq C\left(\frac{\mu(E)}{\mu(Q)}\right)^\delta 
\end{equation}
holds for any measurable subset $E\subset Q$.
We also recall the standard notation $\Delta(Q)$ for the family  of  countable disjoint families of subcubes of a given cube $Q$.

\begin{lettertheorem}\label{thm:Theorem1.5}
Let $\mu$ be a doubling measure in $\mathbb{R}^n$ and consider $w\in A_\infty(\d\mu)$.
 Suppose that, for   a functional $a:\mathcal{Q}\to(0,\infty)$ there exist $p\geq 1$, $s>1$ and $\|a\|>0$ such that, for every cube $Q$ in $\mathbb{R}^n$, the inequality
\begin{equation}\label{eq:SDp}
\left(\sum_{j\in\mathbb{N}}\left(\frac{a(Q_j)}{a(Q)}\right)^p\frac{w(Q_j)}{w(Q)}\right)^{1/p}\leq \|a\|\left(\frac{\mu\left(\bigcup_{j\in\mathbb{N}} Q_j\right)}{\mu(Q)}\right)^{1/s}
\end{equation}
holds for any  $\{Q_j\}_{j\in\mathbb{N}}$ in $\Delta(Q)$. There exists a constant $C(\mu)>0$ such that, for every $f\in L^1_{\loc}(\mathbb{R}^n,\d\mu)$ with
\begin{equation}\label{eq:starting_poinr_PR}
\frac{1}{\mu(Q)}\int_Q|f(x)-f_{Q,\mu}|\,\d\mu(x)\leq a(Q),\qquad Q\in\mathcal{Q},
\end{equation}
the estimate 
\begin{equation}\label{eq:improvement_PR}
\left(\frac{1}{\mu(Q)}\int_Q|f(x)-f_{Q,\mu}|^p\,\d\mu(x)\right)^{1/p}\leq C(\mu)\,s\,\|a\|^s\, a(Q),
\end{equation}
holds for any cube $Q$ in $\mathbb{R}^n$.
\end{lettertheorem}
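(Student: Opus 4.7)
The plan is to mimic the Calder\'on--Zygmund iteration underlying \cite{Perez2019} (rooted in the ideas of \cite[pp.~31--32]{Journe1983}). Fix a cube $Q_0$ and, to ensure a priori finiteness of the quantities appearing below, first replace $f$ by its truncation $f_N:=\max(-N,\min(N,f))$; since the constants produced will not depend on $N$, a Fatou argument at the end recovers the bound for $f$. Introduce
\[
F := \sup_{Q\subseteq Q_0}\frac{1}{a(Q)\,w(Q)^{1/p}}\Bigl(\int_Q |f_N-(f_N)_{Q,\mu}|^p\,\d w\Bigr)^{1/p},
\]
which is finite by the truncation, and aim to show $F\leq C(\mu)\,s\,\|a\|^s$ uniformly in $N$. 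The $\d\mu$-averaged conclusion then follows via the $A_\infty(\d\mu)$ comparability between $w$ and $\mu$.

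The core step is a Calder\'on--Zygmund stopping-time argument. I apply the $\mu$-based CZ decomposition to $|f-f_{Q_0,\mu}|$ on $Q_0$ at the height $\lambda a(Q_0)$, where $\lambda>1$ is a free parameter to be optimised at the end; hypothesis \eqref{eq:starting_poinr_PR} guarantees that $\lambda a(Q_0)$ exceeds the $\mu$-average on $Q_0$, and the construction produces a disjoint family $\{Q_j\}\in\Delta(Q_0)$ satisfying (i) $\lambda a(Q_0)<\frac{1}{\mu(Q_j)}\int_{Q_j}|f-f_{Q_0,\mu}|\,\d\mu\leq c_\mu 2^{n_\mu}\lambda a(Q_0)$, whence $|f_{Q_j,\mu}-f_{Q_0,\mu}|\leq c_\mu 2^{n_\mu}\lambda a(Q_0)$; (ii) $|f-f_{Q_0,\mu}|\leq \lambda a(Q_0)$ $\mu$-a.e.\ on $Q_0\setminus \bigcup_j Q_j$; and (iii) $\mu(\bigcup_j Q_j)\leq \lambda^{-1}\mu(Q_0)$ by Chebyshev and \eqref{eq:starting_poinr_PR}. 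Splitting $Q_0=(Q_0\setminus\bigcup_j Q_j)\cup\bigcup_j Q_j$, applying the triangle inequality in $L^p(\d w)$ twice, and using $|f-f_{Q_0,\mu}|\leq |f-f_{Q_j,\mu}|+|f_{Q_j,\mu}-f_{Q_0,\mu}|$ on each $Q_j$, I arrive at
\[
\|(f-f_{Q_0,\mu})\chi_{Q_0}\|_{L^p(\d w)}\leq \lambda a(Q_0) w(Q_0)^{1/p} + c_\mu 2^{n_\mu}\lambda a(Q_0)\, w\bigl(\bigcup_j Q_j\bigr)^{1/p} + \Bigl(\sum_j \|(f-f_{Q_j,\mu})\chi_{Q_j}\|_{L^p(\d w)}^p\Bigr)^{1/p}.
\]
Bounding the last sum by the definition of $F$ followed by \eqref{eq:SDp} yields the factor $F\|a\|\,a(Q_0) w(Q_0)^{1/p}(\mu(\bigcup_j Q_j)/\mu(Q_0))^{1/s}$, while the $A_\infty(\d\mu)$ property of $w$ promotes (iii) to $w(\bigcup_j Q_j)/w(Q_0)\leq C_0\lambda^{-\delta}$. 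Dividing by $a(Q_0)w(Q_0)^{1/p}$ and taking the supremum over $Q\subseteq Q_0$ gives
\[
F\leq \lambda + c_\mu 2^{n_\mu} C_0^{1/p}\lambda^{1-\delta/p}+ F\,\|a\|\,\lambda^{-1/s}.
\]

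The last step is to optimise $\lambda$. Writing $\theta:=\|a\|\lambda^{-1/s}\in(0,1)$ so as to absorb the $F$-term into the left-hand side, the naive choice $\theta=1/2$ gives $F\lesssim 2^s\|a\|^s$, which is exponential in $s$ and therefore insufficient. The sharp linear-in-$s$ dependence is recovered by letting $\theta=1-1/s$: then $\lambda=(\|a\|/\theta)^s\sim e\|a\|^s$ while the denominator $1-\theta=1/s$ contributes an extra factor of $s$, delivering $F\leq C(\mu,p,\delta)\,s\,\|a\|^s$ as required. The main obstacles I anticipate are, in order: (a) establishing a priori finiteness of $F$ so that the absorption step is legitimate, handled by truncation and Fatou; (b) converting the intermediate $\mu$-estimate (iii) into the required $w$-estimate and the final $w$-averaged bound into the $\d\mu$-averaged one via the $A_\infty(\d\mu)$ hypothesis; and (c) the delicate choice $\theta\to 1$ that produces linear rather than exponential $s$-dependence.
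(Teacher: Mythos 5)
The paper does not actually prove Theorem~\ref{thm:Theorem1.5} --- it is cited verbatim from \cite{Perez2019}. What the paper \emph{does} prove is the generalization Theorem~\ref{thm:self-improving_general}, whose proof is precisely the Calder\'on--Zygmund-decomposition-plus-absorption scheme you describe, and the paper's discussion around \eqref{eq:SDp} makes clear that the same scheme underlies the original Theorem~\ref{thm:Theorem1.5}. Your proposal follows this route faithfully: CZ stopping cubes at height $\lambda a(Q_0)$, the three-term splitting, plugging in the supremum quantity $F$ and the $SD^s_p(w)$ hypothesis, then absorbing and optimising $\lambda$. In particular, the choice $\theta=1-1/s$ (rather than $\theta=1/2$) to force $\lambda^s\sim e\|a\|^s$ while the factor $(1-\theta)^{-1}=s$ delivers the linear $s$-dependence is exactly the right move. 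Your use of the $A_\infty(\d\mu)$ hypothesis --- promoting the $\mu$-smallness $\mu(\bigcup_j Q_j)\le\lambda^{-1}\mu(Q_0)$ to $w$-smallness via $w(E)/w(Q)\le C(\mu(E)/\mu(Q))^\delta$ so that the middle term stays $\lesssim\lambda$ --- is also correct, and is one of the two places the paper says the $A_\infty$ hypothesis enters.

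There are, however, two points that need attention. First, the a priori finiteness of $F$ is not actually secured by truncation alone. Truncating $f$ to $f_N$ bounds the numerator of $F$ by $2N\,w(Q)^{1/p}$, but the denominator $a(Q)\,w(Q)^{1/p}$ still carries $a(Q)$, which is merely a positive functional and can tend to $0$ along shrinking cubes. The original argument (as the paper explicitly recounts just after Theorem~\ref{thm:Theorem1.5}) works with the perturbation $a_\varepsilon(Q):=a(Q)+\varepsilon$, which bounds the denominator away from zero; the price is that one must verify $a_\varepsilon$ still satisfies an $SD_p^s(w)$-type smallness condition, and it is exactly at this point that the term $(w(\bigcup_j Q_j)/w(Q))^{1/p}$ appears and the $A_\infty$ hypothesis is invoked a second time. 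Your write-up omits this step, and as written the absorption $F(1-\theta)\le\dots$ is not legitimate because $F$ may be infinite. This is a genuine (if repairable) gap.

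Second, the closing ``conversion'' to the $\d\mu$-averaged inequality is not justified. The CZ/absorption argument as you have set it up produces a bound on $\bigl(\tfrac{1}{w(Q)}\int_Q|f-f_{Q,\mu}|^p\,\d w\bigr)^{1/p}$, and the $A_\infty(\d\mu)$ condition on $w$ gives only the one-sided control $w(E)/w(Q)\lesssim(\mu(E)/\mu(Q))^\delta$; it does not give pointwise or two-sided comparability between $\d w$-averages and $\d\mu$-averages. So ``the $\d\mu$-averaged conclusion then follows via the $A_\infty(\d\mu)$ comparability'' does not follow. (The conclusion in the cited Theorem~1.5 of \cite{Perez2019} is in fact stated with $\d w$ on the left, so you are almost certainly chasing a transcription artefact in the display \eqref{eq:improvement_PR}; but as the statement stands here, your last sentence of the first paragraph is not a valid deduction and should be removed or corrected to target the $\d w$-averaged estimate.)
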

The proof of Theorem \ref{thm:Theorem1.5} is based in a Calder\'on-Zygmund decomposition which takes advantage of the two main hypothesis of the result, namely, the $SD_p^s(w)$ condition \eqref{eq:SDp} and the $A_\infty(\d\mu)$ condition on $w$.

As already observed in \cite[Remark 1.6]{Perez2019}, the $A_\infty(\d\mu)$ condition on $w$ seems to be an artifice of the proof and it may be not needed for getting the general result. The authors use the $A_\infty$ condition as a tool for proving that the auxiliary functional $a_\varepsilon(Q):=a(Q)+\varepsilon$ satisfies a smallness condition like \eqref{eq:SDp} provided that the original functional $a$ satisfies it. More specifically, they deal with the following computation for any cube $Q$ and any   $\{Q_j\}_{j\in\mathbb{N}}\in\Delta(Q)$:
 \[
 \left(\sum_{j\in\mathbb{N}}\frac{a_\varepsilon(Q_j)^pw(Q_j)}{a_\varepsilon(Q)^pw(Q)}\right)^{1/p}\leq   \left(\sum_{j\in\mathbb{N}}\frac{a (Q_j)^pw(Q_j)}{a (Q)^pw(Q)}\right)^{1/p}+ \left( \frac{ w\left(\bigcup_{j\in\mathbb{N}}Q_j\right)}{ w(Q)}\right)^{1/p}.
 \]
Note that the $A_\infty(\d\mu)$ condition \eqref{eq:(P3')} is what allows to bound the second term in the sum above to finally get a smallness condition like \eqref{eq:SDp} on $a_\varepsilon$ for any $\varepsilon>0$.

The need of this condition for a self-improving result like Theorem \ref{thm:Theorem1.5} has been investigated in \cite{Martinez2020}, where the  first author studies alternative arguments avoiding the $A_\infty(\d\mu)$ condition on the weight to get a self-improving like that. Although the results there are not fully satisfactory in the sense that they do not recover the improvement \eqref{eq:improvement_PR} without the $A_\infty(\d\mu)$ condition, they are good enough to get a new unified approach for getting classical and fractional weighted Poincar\'e-Sobolev inequalities. The approach taken there consists on replacing the weight $w$ in the $SD_p^s(w)$ condition \eqref{eq:SDp} by a slightly more general functional $w_r$ defined by 
\[
w_r(Q):=\mu(Q)^{1/r'}\left(\int_Q w(x)^r\,\d\mu(x)\right)^{1/r},
\]
thus getting a modified $SD_p^s(w)$ condition which reads as follows
\begin{equation}\label{eq:SDp_modified}
\left(\sum_{j\in\mathbb{N}}\left(\frac{a(Q_j)}{a(Q)}\right)^p\frac{w_r(Q_j)}{w_r(Q)}\right)^{1/p}\leq \|a\|\left(\frac{\mu\left(\bigcup_{j\in\mathbb{N}} Q_j\right)}{\mu(Q)}\right)^{1/s}
\end{equation}
for any cube $Q$ and any   $\{Q_j\}_{j\in\mathbb{N}}\in\Delta(Q)$.

Observe that $w_r(Q)$ is the result of applying  Jensen's inequality to the classical functional defined by $w(Q)$ for any cube $Q\in\mathcal{Q}$. This kind of functionals already appeared in some works as for instance   \cite{Perez1995,CruzUribe2000,CruzUribe2002}, in which the authors study sufficient conditions for the two-weighted weak and strong-type (respectively) boundedness of fractional integrals, Calder\'on-Zygmund operators and commutators. There, one can find the following straightforward properties of $w_r$: 
\begin{enumerate}
\item $w(E)\leq w_r(E)$ for any measurable nonzero measure set $E$.  
\item If $E\subset F$ are two nonzero measure sets, then 
\begin{equation}\label{w_r1}
w_r(E)\leq \left(\frac{\mu(E)}{\mu(F)}\right)^{1/r'}w_r(F).
\end{equation}
\item If $E=\bigcup_{j\in\mathbb{N}}E_j$ for some disjoint family $\{E_j\}_{j\in\mathbb{N}}$, then 
\begin{equation}\label{w_r2}
\sum_{j\in\mathbb{N}}w_r(E_j) \leq w_r(E).
\end{equation}
\item If two measurable sets $E$ and $F$ satisfy $E\subset F$, then 
\begin{equation}\label{w_r3}
w_r(E)\leq w_r(F).
\end{equation}
\end{enumerate}
The above properties are what  allow to prove a smallness condition for the perturbations $a_\varepsilon$ of a functional $a$. Specially, condition \eqref{w_r1} is what  makes possible to work with these perturbations  without assuming the $A_\infty(\d\mu)$ condition on the weight. 

Also related with this problem, and more related to the results which will be studied here is the work \cite{Ombrosi2019}, where the embedding of $\mathrm{BMO}(\d\mu)$ into certain weighted $\mathrm{BMO}$ spaces is characterized. To be precise, they consider the following weighted $\mathrm{BMO}$ spaces. 

\begin{definition}\label{def:BMO}
Let us consider a positive functional $Y:\mathcal{Q}\to(0,\infty)$  defined over the family $\mathcal{Q}$ of all cubes of $\mathbb{R}^n$. Consider a measure $\mu$ in $\mathbb{R}^n$ and pick a weight $v\in L^1_{\loc}(\mathbb{R}^n,\mu)$. We define the class of functions with bounded $(v\,\d\mu,Y)$-mean oscillations as
\begin{equation}\label{eq:BMO}
\BMO_{v\,\d\mu,Y}:=\left\{f\in L_{\loc}^1(\mathbb{R},\d\mu):\|f\|_{\BMO_{v\,\d\mu,Y}}<\infty\right\},
\end{equation}
where 
\begin{equation}\label{eq:BMO2}
\|f\|_{\BMO_{v\,\d\mu,Y}}:= \sup_{Q\in\mathcal{Q}}\frac{1}{Y(Q)}\int_Q|f(x)-f_{Q,\mu}|v(x)\,\d\mu(x). 
\end{equation}
For the special case $v=1$, $Y(Q)=\mu(Q)$ the notation $\BMO(\d\mu)$ will be adopted.
\end{definition}
It is one of the main results in \cite{Ombrosi2019} that the embedding inequality
\[
\|f\|_{\mathrm{BMO}_{v\,\d x,Y}}\leq B\|f\|_{\mathrm{BMO}(\d x)}
\]
is valid if and only if the weight $v$ and the functional $Y$ satisfy the Fujii-Wilson type $A_\infty$ condition 
 \begin{equation}\label{eq:Fujii-WilsonY}
 [v]_{A_{\infty,Y}}:=\sup_{Q\in\mathcal{Q}}\frac{1}{Y(Q)}\int_Q M(v\chi_Q)(x)\,\d x<\infty,
 \end{equation}
 which, in case $Y(Q)=|Q|$, coincides with the $A_\infty(\d x)$ condition introduced in \eqref{eq:(P3')} (see \cite{Duoandikoetxea2016}).
 The following theorem generalizes the aforementioned result in \cite{Ombrosi2019} to the setting of doubling measures.  A proof of it is provided in Appendix \ref{proof:characterization_Ainfty}.
\begin{theorem}\label{pr:characterization_Ainfty}
Let $\mu$ be a doubling measure in $\mathbb{R}^n$ and consider a functional $Y:\mathcal{Q}\to(0,\infty)$. The following two conditions on a weight $w\in L^1_{\loc}(\mathbb{R}^n,\d\mu)$  are equivalent:
\begin{enumerate}
\item There is some constant $B>0$ such that
\[
\frac{1}{Y(Q)}\int_Q|f(x)-f_{Q,\mu}|\,\d w(x)\leq B \|f\|_{\mathrm{BMO}(\d\mu)}
\]
for every function $f\in\mathrm{BMO}(\d\mu)$ and every cube $Q$ in $\mathbb{R}^n$.
\item The weight $w$ is an $A_{\infty,Y}(\d\mu)$ weight, i.e.
 \begin{equation}\label{eq:Fujii-WilsonY_mu}
 [w]_{A_{\infty,Y}(\d\mu)}:=\sup_{Q\in\mathcal{Q}}\frac{1}{Y(Q)}\int_Q M_\mu(w\chi_Q)(x)\,\d \mu(x)<\infty.
 \end{equation}
\end{enumerate}
Moreover, there exist positive constants $C_1$ and $C_2$ such that $C_1B \leq [w]_{A_{\infty,Y}(\d\mu)}\leq C_2 B$.
\end{theorem}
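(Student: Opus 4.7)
The plan is to prove the two implications separately, following the general strategy of \cite{Ombrosi2019} and adapting each step to handle a doubling measure $\mu$ in place of Lebesgue measure. Equivalence of constants will follow by tracking the dependencies carefully.

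\textbf{Sufficiency, $(2)\Rightarrow(1)$.} Starting from $[w]_{A_{\infty,Y}(\d\mu)}=:K<\infty$, I would first extract a quantitative decay estimate on subsets. The pointwise domination $w\chi_Q\le M_\mu(w\chi_Q)$ together with a Calder\'on--Zygmund stopping argument with respect to $\d\mu$ on the cube $Q$ at the sequence of heights $2^k w(Q)/\mu(Q)$ leads, via a standard good--$\lambda$ iteration, to a reverse H\"older inequality of the form
\[
\left(\frac{1}{\mu(Q)}\int_Q w^{1+\delta}\,\d\mu\right)^{1/(1+\delta)}\le c\,\frac{Y(Q)}{\mu(Q)},
\]
with $\delta>0$ depending only on $K$ and $c_\mu,n_\mu$. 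H\"older's inequality then yields $w(E)\le c Y(Q)\bigl(\mu(E)/\mu(Q)\bigr)^{\alpha}$ for every measurable $E\subset Q$ and some $\alpha>0$. Combining this decay with the John--Nirenberg inequality \eqref{eq:JN_ineq} through the layer-cake formula
\[
\int_Q|f-f_{Q,\mu}|\,\d w=\int_0^\infty w\bigl(\{x\in Q:|f(x)-f_{Q,\mu}|>t\}\bigr)\,\d t,
\]
reduces the problem to the evaluation of $\int_0^\infty e^{-\alpha c_2 t/\|f\|_{\mathrm{BMO}(\d\mu)}}\,\d t$, which gives the embedding with constant proportional to $\|f\|_{\mathrm{BMO}(\d\mu)}$ and of the required form $C_1^{-1}K$.

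\textbf{Necessity, $(1)\Rightarrow(2)$.} Fix a cube $Q$ and, for a truncation parameter $N>0$, test the embedding on
\[
f_N(x):=\log\min\bigl\{M_\mu(w\chi_Q)(x),\,N\bigr\}.
\]
A Coifman--Rochberg type result for doubling measures ensures that $\|f_N\|_{\mathrm{BMO}(\d\mu)}\le c(\mu)$ uniformly in $N$, $Q$, and $w$, so the hypothesis gives
\[
\frac{1}{Y(Q)}\int_Q |f_N-(f_N)_{Q,\mu}|\,\d w\le B\,c(\mu).
\]
Since $\mu$ is doubling one has the pointwise lower bound $M_\mu(w\chi_Q)(x)\ge w(Q)/\mu(Q)$ for $\mu$-a.e.\ $x\in Q$, and Jensen's inequality yields the upper bound $\exp((f_N)_{Q,\mu})\le \mu(Q)^{-1}\int_Q\min\{M_\mu(w\chi_Q),N\}\,\d\mu$. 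Feeding these two one-sided controls into the embedding inequality and unravelling produces an estimate of the form $\int_Q\min\{M_\mu(w\chi_Q),N\}\,\d\mu\le C(\mu)\,B\,Y(Q)$, which is uniform in $N$. Monotone convergence as $N\to\infty$ then gives $[w]_{A_{\infty,Y}(\d\mu)}\le C_2 B$, completing the equivalence.

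\textbf{Main obstacle.} The delicate step is the necessity direction: the logarithmic test function is forced on us because we need a BMO function whose exponential retains a trace of $M_\mu(w\chi_Q)$, and the Jensen-type manipulations that relate the oscillation of $f_N$ with respect to $\d w$ to the quantity $\int_Q M_\mu(w\chi_Q)\,\d\mu$ have to be done with some care to ensure that the constants only depend on $\mu$ and $B$. A secondary technical point is that the Coifman--Rochberg estimate $\|\log M_\mu(w\chi_Q)\|_{\mathrm{BMO}(\d\mu)}\le c(\mu)$ has to be available with a constant independent of $Q$ and $w$; this requires checking the classical proof (based on the weak $(1,1)$ boundedness of $M_\mu$ and a John--Nirenberg-type argument) in the doubling setting. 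In the sufficiency direction, by contrast, the main care is that the self-improvement of the $A_\infty$ condition goes through with the functional $Y(Q)$ playing the role usually played by $w(Q)$, which is unproblematic since $Y$ intervenes only as an upper envelope in the decay bound.
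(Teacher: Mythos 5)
Your proposal diverges from the paper's proof in both directions, and in the necessity direction there is a genuine gap.

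For the sufficiency direction $(2)\Rightarrow(1)$, the paper proceeds via a dyadic sparse decomposition (Lemma \ref{lem:Hyt}) which dominates $|f-f_{Q_0}|$ pointwise by sparse averages of local oscillations, and then sums directly against $\d w$ using $\mu(Q)\leq 2\mu(E_Q)$ and $M_\mu(v\chi_{Q_0})$, giving $B\lesssim[w]_{A_{\infty,Y}}$ in essentially three lines. You instead propose to pass through a reverse H\"older inequality $\bigl(\langle w^{1+\delta}\rangle_{Q,\mu}\bigr)^{1/(1+\delta)}\lesssim Y(Q)/\mu(Q)$ and then the layer-cake representation plus John--Nirenberg. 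That route is genuinely different; but the reverse H\"older step is not standard here, because the known good-$\lambda$ argument uses $Y(Q)=w(Q)$ on \emph{every} subcube arising from the stopping procedure, and the inequality $\int_P M_\mu(w\chi_P)\,\d\mu\le K Y(P)$ does not by itself relate to $w(P)$ (one only has $w(P)\le K Y(P)$, not the reverse). You would need to supply a proof that your claimed reverse H\"older inequality indeed follows for a general functional $Y$; as stated it is an assertion, not an argument, and it is the whole content of the implication.

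For the necessity direction $(1)\Rightarrow(2)$, the ``unravelling'' step does not work as described. The embedding hypothesis controls $\int_Q|f_N-(f_N)_{Q,\mu}|\,\d w$, which is an integral against $w\,\d\mu$, whereas the quantity you want to bound is $\int_Q M_\mu(w\chi_Q)\,\d\mu$, an integral against $\d\mu$. The two ``one-sided controls'' you list do not bridge this gap: the Jensen inequality $\exp((f_N)_{Q,\mu})\le\langle\min\{M_\mu(w\chi_Q),N\}\rangle_{Q,\mu}$ bounds the wrong quantity from \emph{below}, so it cannot be fed into the embedding to bound the maximal-function integral from above. The paper's proof supplies the missing bridge with the Stein $L\log L$ estimate
\[
\frac{1}{\mu(Q)}\int_Q M_\mu(w\chi_Q)\,\d\mu \lesssim \frac{1}{\mu(Q)}\int_Q\Bigl[1+\log^+\!\Bigl(\frac{w\chi_Q}{w_{Q,\mu}}\Bigr)\Bigr]w\,\d\mu,
\]
which converts the $\d\mu$-integral of $M_\mu(w\chi_Q)$ into an integral against $\d w$, and then shows that $|f-f_{Q,\mu}|\ge\frac12\log^+\!\bigl(w\chi_Q/(\beta_\mu^2 w_{Q,\mu})\bigr)$ on the level set $\{w\ge\beta_\mu^2 w_{Q,\mu}\}$ via Jensen and Kolmogorov. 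Without this inequality the argument does not close. Two further points you omit: (i) the paper first establishes $w(Q)\le 8c_\mu B Y(Q)$ by testing the embedding on $\chi_{\tilde Q}$ for a subcube $\tilde Q$ with $\mu(\tilde Q)=\alpha\mu(Q)$, $\alpha\asymp c_\mu^{-1}$; for a general doubling measure the existence of such a subcube is nontrivial and is the content of Lemma \ref{Cubes:claim}, which is one of the main technical novelties needed to go beyond Lebesgue measure. This bound is used in the final estimate and cannot be skipped. (ii) Your test function $\log\min\{M_\mu(w\chi_Q),N\}$ is not obviously in $\mathrm{BMO}(\d\mu)$ with a uniform constant, because $M_\mu(w\chi_Q)$ itself is not $A_1$; the paper works with $\log M_\mu(w\chi_Q/w_{Q,\mu})^{1/2}$ precisely so that the Coifman--Rochberg $A_1$ bound applies with a constant depending only on $\mu$.
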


Note that $Y(Q):=w_r(Q)$, $r>1$ is a possible choice of $Y$ in the above theorem and thus the particular case of a constant functional in the main theorem \cite{Martinez2020} proves that $w$ is an $A_{\infty,w_r}(\d\mu)$ weight for any $r>1$. Evidently, the Fujii-Wilson $A_\infty(\d\mu)$ weights studied for instance in \cite{Hytoenen2012-2} are $A_{\infty,Y}(\d\mu)$ weights for the functional $Y$ defined by $Y(Q):=w(Q)$ for every cube $Q$ in $\mathbb{R}^n$. In particular, this answers the question on the need of the $A_\infty(\d\mu)$ condition for Theorem \ref{thm:Theorem1.5} at least in the case of a constant functional $a$. Indeed, on the one hand,  as weights satisfying the $A_\infty(\d\mu)$ condition \eqref{eq:(P3')} are precisely those satisfying the Fujii-Wilson $A_{\infty,w}(\d \mu)$ condition \eqref{eq:Fujii-WilsonY_mu}, Theorem \ref{thm:Theorem1.5} ensures that, for any weight satisfying the $A_\infty(\d\mu)$ condition  \eqref{eq:(P3')}, the self-improving inequality 
\[
\frac{1}{w(Q)}\int_Q|f(x)-f_{Q,\mu}|\,\d w(x)\leq B \|f\|_{\mathrm{BMO}(\d\mu)},\qquad Q\in\mathcal{Q}
\]
holds.
On the other hand, any weight for which the above self-improvement holds must be an $A_\infty(\d\mu)$ weight in virtue of Theorem \ref{pr:characterization_Ainfty} and \cite[Theorems 3.1 (b) and 4.2 (b)]{Duoandikoetxea2016}. Thus, according to Theorem \ref{thm:Theorem1.5}, it happens that the $SD_p^s(w)$ condition \eqref{eq:SDp} for the constant functionals is equivalent to the $A_\infty(\d\mu)$ condition on the weight $w$, i.e. $w\in A_\infty(\d\mu)$ if and only if there are $s>0$ and $C>0$ such that given a cube $Q$ in $\mathbb{R}^n$ and $\{Q_j\}_{j\in\mathbb{N}}\in\Delta(Q)$,
\begin{equation}\label{eq:SDpAinfty}
\left(\sum_{j\in\mathbb{N}} \frac{w(Q_j)}{w(Q)}\right)^{1/p}\leq C\left(\frac{\mu\left(\bigcup_{j\in\mathbb{N}} Q_j\right)}{\mu(Q)}\right)^{1/s}
\end{equation}
for some $p\geq 1$ (or equivalently, for every $p\geq1$). In fact, in this case it happens that $[w]_{A_\infty(\d\mu)}\asymp s/p$, where $s$ is the best possible exponent in the above condition.  In general, it is considered in \cite{Ombrosi2019} a general condition in the spirit of \eqref{eq:SDpAinfty} which generalizes the situation to more general functionals $Y$ (including the case $Y(Q):=w_r(Q)$, $r>1$) and which reads as follows: given $p\geq 1$, there is $s>0$ such that for any cube $Q$ in $\mathbb{R}^n$ and any $\{Q_j\}_{j\in\mathbb{N}}\in\Delta(Q)$, 
\begin{equation}\label{eq:generalized_Ainfty}
\left(\sum_{j\in\mathbb{N}}\frac{Y(Q_j)}{Y(Q)}\right)^{1/p}\leq C\left(\frac{\mu\left(\bigcup_{j\in\mathbb{N}} Q_j\right)}{\mu(Q)}\right)^{1/s}.
\end{equation}
This condition may be regarded as an $A_\infty(\d\mu)$ condition at scale $p$ for the functional $Y$ where, in analogy with the case $Y(Q):=w(Q)$, one could call $[Y]_{A_\infty(\d\mu,p)}$ (or $[Y]_{A_\infty(\d\mu)}$ in case $p=1$) to the best possible $s$  in the above condition. Observe that this generalizes the usual case, where for an $A_\infty(\d\mu)$ weight we have that $[w]_{A_\infty(\d\mu,p)}\asymp p[w]_{A_\infty(\d\mu)}$. Also, note that, by taking into account properties \eqref{w_r2} and \eqref{w_r1} of $w_r$,
\[
\begin{split}
\sum_{j\in\mathbb{N}}\frac{w_r(Q_j)}{w_r(Q)}\leq  \frac{w_r\left(\bigcup_{j\in\mathbb{N}}Q_j\right)}{w_r(Q)}\leq \left(\frac{\mu\left(\bigcup_{j\in\mathbb{N}}Q_j\right)}{\mu(Q)}\right)^{1/r'},
\end{split}
\]
so $w_r\in A_\infty(\d\mu)$ and $[w_r]_{A_\infty(\d\mu)}\leq r'$ for every $r>1$. This is the model example for the embedding result \cite[Theorem 1.6]{Ombrosi2019}, which is a particular case of \cite[Theorem 2]{Martinez2020}. 

As we advanced in the Introduction, it is our goal in this paper to get self-improving inequalities in the spirit of that in Theorem \ref{thm:PerezRela} (which, as already said, is the corollary of Theorem \ref{thm:Theorem1.5} we are interested in) replacing the $L^p$ norms by different norms or even quasi-norms.   Therefore, a brief reminder of the main concepts on the theory of quasi-normed spaces of functions is in order. 
 
\begin{definition}\label{def:quasinorm}
Let $X$ be a vector space. A function $\|\cdot\|:X\to[0,\infty)$ is called a quasi-norm if there is a constant $K\geq 1$ such that
\begin{enumerate}
\item $\|x\|=0$ if and only if $x=0$.
\item $\|\lambda x\|=|\lambda|\|x\|$ for $\alpha\in\mathbb{R}$ and $x\in X$.
\item $\|x_1+x_2\|\leq K\left(\|x_1\|+\|x_2\|\right)$ for all $x_1, x_2\in X$.  The constant $K$ will be called the \emph{geometric constant} of $\|\cdot\|$.
\end{enumerate}
A quasi-norm $\|\cdot\|$ over a vector space $X$ will be denoted by $\|\cdot\|_X$. In case $K=1$, the term ``quasi'' for the notation will be skipped. 
\end{definition}
 Consider now the measure space $(\mathbb{R}^n,\d\nu)$, where $\nu$ is a measure on the space. If $L^0(\mathbb{R}^n,\d\nu)$ is the vector lattice of all measurable functions modulo $\nu$-null functions, the positive cone of $L^0(\mathbb{R}^n,\d\nu)$ will be denoted by $L^0(\mathbb{R}^n,\d\nu)^+$. If $X(\d\nu)$ is an order ideal of $L^0(\mathbb{R}^n,\d\nu)$ 
(i.e. a vector subspace of $L^0(\mathbb{R}^n,\d\nu)$ such that $f\in X(\d\nu)$ for any $f\in L^0(\mathbb{R}^n,\d\nu)$ satisfying $|f|\leq |g|$ $\nu$-a.e. with $g\in X(\d\nu) $), a quasi-norm $\|\cdot\|_{X(\d\nu)}$ on $X(\d\nu)$ is said to be a lattice quasi-norm if $\|f\|_{X(\d\nu)}\leq \|g\|_{X(\d\nu)}$ 
whenever $f,g\in X(\d\nu)$ satisfy $|f|\leq |g|$. In this case, the pair $(X(\d\nu),\|\cdot\|_{X(\d\nu)})$ (or, sometimes, simply $X(\d\nu)$) is called a quasi-normed function space based on $(\mathbb{R}^n,\d\nu)$. For a given measurable subset $E$ of $\mathbb{R}^n$, the notation $\|f\|_{X(E,\d\mu)}:=\|f \chi_E\|_{X(\d\mu)}$ will be used.   Recall also the discussion on the concept of  local average below Theorem \ref{thm:PerezRela} in the Introduction.

The  normed function spaces introduced here coincide with those called normed K\"othe function spaces \cite[Ch. 15]{Zaanen1967}, which are defined as those for which a function norm $\rho:\mathcal{M}^+(\nu)\to[0,\infty]$ is finite, where $\mathcal{M}^+(\nu)$ is the class of nonnegative measurable functions up to $\nu$-a.e. null functions.  See \cite[Remark 2.3 (ii)]{Okada2008} for more {details} about this. 

If we want to study self-improving results in the spirit of Theorem \ref{thm:PerezRela} (or, more in general, in the spirit of \cite[Theorem 1.6]{Ombrosi2019}) for norms different from the $L^p$ ones, a good strategy would be to try to write the conditions on these theorems in terms of the $L^p$ norm. If the obtained result makes sense for a different norm, it may be the correct condition for such a generalization. It turns out  that this strategy works. Indeed, consider the  general condition \eqref{eq:generalized_Ainfty} and pick any family $\{h_j\}_{j\in\mathbb{N}}$ of functions satisfying that $ \left\|\ h_j  \chi_{Q_j}\right\|_{L^p\left(Q_j,\frac{\d w}{Y(Q_j)}\right)}=1$, where $\{Q_j\}_{j\in\mathbb N}$ is a family of pairwise disjoint subcubes of a cube $Q$.
 We can make the following computations:  
\[
\begin{split}
\left(\sum_{j\in\mathbb{N}}\frac{Y(Q_j)}{Y(Q)}\right)^{1/p}&= \left(\sum_{j\in\mathbb{N}} \frac{1}{Y(Q)}\int_{Q_j} h_j(x)^p\,\d w(x) \right)^{1/p} \\
&= \left( \frac{1}{Y(Q)}\int_{Q }\sum_{j\in\mathbb{N}} h_j(x)^p\chi_{Q_j}(x)\,\d w(x) \right)^{1/p} \\
&= \left( \frac{1}{Y(Q)}\int_{Q} \left[\sum_{j\in\mathbb{N}} h_j(x) \chi_{Q_j}(x)\right]^p\,\d w(x) \right)^{1/p} \\
&= \left\|\sum_{j\in\mathbb{N}} h_j  \chi_{Q_j}\right\|_{L^p\left(Q,\frac{\d w}{Y(Q)}\right)},
\end{split}
\]
This way, we have written the left-hand side of \eqref{eq:generalized_Ainfty} in terms of the $L^p(\d w)$ norm. This, and the fact that in the self-improving results there is no special reason why this left-hand side must be controlled by a power function of $\mu\left(\bigcup_{j\in\mathbb{N}}Q_j\right)/\mu(Q)$, leads us to make the following definition, in a clear paralellism with the comments below \eqref{eq:generalized_Ainfty}.

\begin{definition}\label{def:generalized_Ainfty2}
Let $\mu,\nu$ be measures in $\mathbb{R}^n$ (usually a weighted measure) and consider a quasi-norm $\|\cdot\|_{X(\d\nu)}$. A functional $Y:\mathcal{Q}\to(0,\infty)$ will be said to be an $A_\infty(\d\mu, X(\d\nu))$ functional if there exist some constant ${C_Y}>0$ and some increasing bijection $\Psi:[0,1]\to[0,1]$ such that
\begin{equation}\label{eq:general_smallness}
 \left\|\sum_{j\in\mathbb{N}}  h_j \chi_{Q_j }\right\|_{X\left(Q,\frac{\d \nu}{Y(Q)}\right)}\leq  {C_Y}\Psi^{-1}\left[\frac{\mu\left(\bigcup_{j\in\mathbb{N}} Q_j\right)}{\mu(Q)}\right]
\end{equation}
for every $\{Q_j\}_{j\in\mathbb{N}}\in\Delta(Q)$, $Q\in\mathcal{Q}$ and every    family of functions $\{h_j\}_{j\in\mathbb{N}}$ satisfying $\|h_j\|_{X\left(Q_j,\frac{\d \nu}{Y(Q_j)}\right)}=1$ for every $j\in \mathbb{N}$.
\end{definition}
This condition generalizes the above $A_\infty(\d\mu)$ condition \eqref{eq:generalized_Ainfty} for functionals to any abstract quasi-norm $\|\cdot\|_{X(\nu)}$ scale.

With this condition  at hand, it is possible to prove a new self-improving result which generalizes \cite[Theorem 1.5]{Perez2019}, \cite[Theorem 1.6]{Ombrosi2019} and \cite[Theorem 2]{Martinez2020} in the case a constant functional $a$ is considered. First some technical lemmas have to be proved. We will start by imposing some conditions on the function norms we will consider. 
\begin{definition}\label{def:compatible} Let   $Y:\mathcal{Q}\to(0,\infty)$ be a functional and consider a measure $\nu$ in $\mathbb{R}^n$. A lattice quasi-norm $\|\cdot\|_{X(\d\nu)}$    will be said to be good with respect to $Y$   if it satisfies:
\begin{enumerate}
\item (Fatou's property) \label{property:general_Fatou}  If $\{f_k\}_{k\in\mathbb{N}}$ are positive functions in $X(\d\nu)$ with $f_k\uparrow f$ $\nu$-a.e. for some function $f\in X(\d\nu)$, then $\|f_k\|_{X(\d\nu)}\uparrow \|f\|_{X(\d\nu)}$ and $\|f_k\|_{X\left(Q,\frac{\d\nu}{Y(Q)}\right)}\uparrow \|f\|_{X\left(Q,\frac{\d\nu}{Y(Q)}\right)}$ for any cube $Q$.
\item \label{property:consistency} $\chi_E\in X(\d\nu)$ for any $\nu$-finite measure set $E$.
\item (Average property) \label{property:general_average}  $\|\chi_Q\|_{X\left(Q,\frac{\d\nu}{Y(Q)}\right)}\leq 1$ for every cube $Q$ in $\mathbb{R}^n$. This will be called the average property of $\|\cdot\|_{X(\d\nu)}$ with respect to $Y$.
\end{enumerate}
If $\|\cdot\|_{X(\d\nu)}$   is good with respect to $Y$ it will be said that they are compatible. If $\|\cdot\|_{X(\d\nu)}$   is good with respect to $Y(Q):=\nu(Q)$ we will simply say that it is a good quasi-norm.
\end{definition}
\begin{example}
The $L^p(\mathbb{R}^n,\d\nu)$ space , $p\geq1$, is  an example of space with a norm compatible with any functional $Y$ satisfying that $\nu(Q)\leq Y(Q)$ for every cube $Q$ in $\mathbb{R}^n$.   Other typical examples are the weak Lebesgue spaces, defined for $0<p<\infty$ as
\[
L^{p,\infty}(\mathbb{R}^n,\d\nu):=\left\{ f\in L^0(\mathbb{R}^n,\d\nu):\|f\|_{L^{p,\infty}(\mathbb{R}^n,\d\nu)}<\infty  \right\},
\]
where  $\nu$ can be the usual underlying doubling measure $\mu$ or any other measure depending or not on $\mu$.  Here we use the standard notation $\|f\|_{L^{p,\infty}(\mathbb{R}^n,\d\nu)}$ for the \emph{weak norm} defined as
\begin{equation*}
\|f\|_{L^{p,\infty}(\mathbb{R}^n,\d\nu)}:= \sup_{t>0}t \nu\left(\{x\in\mathbb{R}^n:|f(x)|>t\}\right)^{\frac{1}{p}}.
\end{equation*}
 
\end{example}

\section{A new quantitative self-improving theorem for \texorpdfstring{$\mathrm{BMO}$}{BMO} functions}\label{sec:self-improve}

In this section we prove the new general self-improving result in Theorem \ref{thm:self-improving_general}. We start by proving some preliminary lemmas which will allow us to reduce  the proof to the case of bounded functions.

\subsection{Lemmata}

We first include the following trivial lemma regarding the oscillations of a function.
\begin{lemma}\label{lem:osc}
Let $f\in L^1_{\mathrm{loc}}(\mathbb{R}^n,d\mu)$ and let $p\geq 1$. If $E$ is a positive finite measure set of $\mathbb{R}^n$, then
\[
\begin{split}
\inf_{c\in \mathbb{R}}\left(\frac{1}{\mu(E)}\int_E |f(x)-c|^pd\mu(x)\right)^{1/p}&\leq\left(\frac{1}{\mu(E)}\int_E |f(x)-f_E|^pd\mu(x)\right)^{1/p}\\
&\leq 2\inf_{c\in \mathbb{R}}\left(\frac{1}{\mu(E)}\int_E |f(x)-c|^pd\mu(x)\right)^{1/p}.
\end{split}
\]
\end{lemma}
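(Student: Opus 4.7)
The first inequality is immediate: since $f_E$ is a particular value of the constant $c$, the infimum over all $c\in\mathbb{R}$ of the $L^p$-average distance to $c$ is at most the $L^p$-average distance to $f_E$.

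For the second (and more interesting) inequality, the plan is to fix an arbitrary $c\in\mathbb{R}$ and use Minkowski's inequality in $L^p(E,d\mu/\mu(E))$ to split
\[
\left(\frac{1}{\mu(E)}\int_E |f(x)-f_E|^p\,d\mu(x)\right)^{1/p}\leq \left(\frac{1}{\mu(E)}\int_E |f(x)-c|^p\,d\mu(x)\right)^{1/p}+|c-f_E|,
\]
where the second summand comes from integrating the constant $|c-f_E|$. The key step is then to control $|c-f_E|$ by the first summand. Writing $c-f_E=\mu(E)^{-1}\int_E(c-f(y))\,d\mu(y)$, Jensen's inequality applied to the convex function $t\mapsto |t|^p$ gives
\[
|c-f_E|\leq \frac{1}{\mu(E)}\int_E |f(y)-c|\,d\mu(y)\leq \left(\frac{1}{\mu(E)}\int_E |f(y)-c|^p\,d\mu(y)\right)^{1/p},
\]
where the second inequality is Hölder (or again Jensen) exploiting $p\geq 1$. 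Combining both bounds yields a factor $2$ on the right-hand side, and taking the infimum over $c\in\mathbb{R}$ finishes the argument.

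There is no real obstacle here; the only thing to be careful about is the use of Jensen's inequality for $t\mapsto |t|^p$ which requires $p\geq 1$, which is our standing hypothesis. The lemma will be invoked later to replace the oscillation around $f_{Q,\mu}$ by the (sometimes more flexible) oscillation around an arbitrary constant, losing only a factor of $2$.
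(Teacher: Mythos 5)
Your proof is correct, and since the paper states this lemma as ``trivial'' and offers no proof of its own, there is nothing to compare against; your argument (Minkowski to split off the constant $|c-f_E|$, then the integral triangle inequality followed by Jensen/H\"older with $p\geq 1$ to reabsorb it, losing a factor of $2$) is exactly the standard one the authors had in mind. One small bookkeeping point: the first inequality in your chain, $|c-f_E|\leq \frac{1}{\mu(E)}\int_E|f(y)-c|\,d\mu(y)$, is just the triangle inequality for integrals rather than Jensen applied to $t\mapsto|t|^p$; Jensen (or H\"older) is what you use in the second step. This is only a matter of attribution and does not affect the validity of the argument.
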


Recall that, for given $L<U$, the notation $\tau_{LU}$ is used for the  function   $\tau_{LU}:\mathbb{R}\to \mathbb{R}$  given by
\[
\tau_{LU}(a):=\begin{cases}
L&\text{if }a<L,\\
a&\text{if  }L\leq a\leq U\\
U&\text{if }a>U.
\end{cases}
\]

These functions allow  to define the truncations  $\tau_{LU}(g)$ of a given function $g$ by
\[
\tau_{LU}g(x):=\tau_{LU}(g(x)),\qquad  L<U,\ x\in\mathbb{R}^n.
\]

\begin{lemma}\label{lem:osc2_general}
Let $\nu$ be any  Borel measure in $\mathbb{R}^n$ and consider $f\in L^1_{\loc}(\mathbb{R}^n,\d\nu)$. Then, for every cube $Q$ in $\mathbb{R}^n$,   
\[
\frac{1}{\nu(Q)}\int_Q|f-f_{Q,\nu}|\,\d\nu  \leq\sup_{L<U}\frac{1}{\nu(Q)}\int_Q|\tau_{LU}f-(\tau_{LU}f)_{Q,\nu}|\,\d\nu\leq \frac{2}{\nu(Q)}\int_Q|f-f_{Q,\nu}|\,\d\nu.
\]
 \end{lemma}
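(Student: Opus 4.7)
The proof naturally splits into the two inequalities framing the supremum. For the upper bound, I would show that the inequality
\[
\frac{1}{\nu(Q)}\int_Q |\tau_{LU}f - (\tau_{LU}f)_{Q,\nu}|\,\d\nu \leq \frac{2}{\nu(Q)}\int_Q |f - f_{Q,\nu}|\,\d\nu
\]
holds uniformly for every pair $L<U$, so that the supremum is controlled. Applying Lemma \ref{lem:osc} (with $p=1$) to $\tau_{LU}f$, the left-hand side is bounded by $2\inf_{c\in\mathbb{R}} \nu(Q)^{-1}\int_Q |\tau_{LU}f - c|\,\d\nu$. I would then choose the admissible constant $c = \tau_{LU}(f_{Q,\nu})$ and invoke the key elementary fact that $\tau_{LU}:\mathbb{R}\to\mathbb{R}$ is a $1$-Lipschitz contraction (since it is the identity on $[L,U]$ and constant outside), so that
\[
|\tau_{LU}f(x) - \tau_{LU}(f_{Q,\nu})| \leq |f(x) - f_{Q,\nu}|
\]
pointwise, and integration gives the desired bound.

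For the lower bound, the plan is to realize the left-hand side as a limit of the quantities on the right. Pick sequences $L_k \to -\infty$ and $U_k \to +\infty$ and set $g_k := \tau_{L_k U_k} f$. Since $|g_k| \leq |f|$ pointwise and $g_k \to f$ pointwise, the dominated convergence theorem (using local integrability of $f$ on $Q$) yields both $g_k \to f$ and $(g_k)_{Q,\nu} \to f_{Q,\nu}$. Hence
\[
|g_k - (g_k)_{Q,\nu}| \longrightarrow |f - f_{Q,\nu}|
\]
pointwise on $Q$, and the sequence is dominated by $|f| + \nu(Q)^{-1}\int_Q |f|\,\d\nu \in L^1(Q,\d\nu)$. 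Dominated convergence (or, alternatively, Fatou's lemma) then gives
\[
\frac{1}{\nu(Q)}\int_Q |f - f_{Q,\nu}|\,\d\nu = \lim_{k\to\infty} \frac{1}{\nu(Q)}\int_Q |g_k - (g_k)_{Q,\nu}|\,\d\nu,
\]
which is trivially dominated by the supremum over all $L<U$.

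The argument involves no real obstacle. The only subtlety is the standard one of making sure that we can take $c=\tau_{LU}(f_{Q,\nu})$ inside the infimum of Lemma \ref{lem:osc} (which is immediate, as that constant is admissible), and that the dominating function $|f|+\nu(Q)^{-1}\int_Q|f|\,\d\nu$ is genuinely integrable on $Q$ by local integrability. Combining both bounds yields the claim.
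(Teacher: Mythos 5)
Your proposal is correct and follows essentially the same route as the paper: the upper bound via Lemma \ref{lem:osc} with $p=1$, the choice $c=\tau_{LU}(f_{Q,\nu})$, and the $1$-Lipschitz property of $\tau_{LU}$; the lower bound via a limiting argument using dominated convergence for the averages $(\tau_{LU}f)_{Q,\nu}\to f_{Q,\nu}$. The only cosmetic difference is that the paper closes the limit step with Fatou's lemma where you invoke the dominated convergence theorem (with the explicit majorant $|f|+\nu(Q)^{-1}\int_Q|f|\,\d\nu$, valid once $L<0<U$), which you correctly note is interchangeable here.
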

\begin{proof}

Let $Q$ be a cube in $\mathbb{R}^n$. Observe first that, given $L<U$ one has that $|\tau_{LU}(a)-\tau_{LU}(b)|\leq |a-b|$ for every $a,b\in \mathbb{R}$. This allows to write 
\[
\begin{split}
\frac{1}{\nu(Q)}\int_Q|\tau_{LU}f-(\tau_{LU}f)_{Q,\nu}|\,\d\nu&\leq 2 \inf_{c\in\mathbb{R}}\frac{1}{\nu(Q)}\int_Q|\tau_{LU}f-c|\,\d\nu\\
 &  \leq \frac{2}{\nu(Q)}\int_Q|\tau_{LU}f-\tau_{LU}(f_{Q,\nu})|\,\d\nu \\
 & \leq   \frac{2}{\nu(Q)}\int_Q|f-f_{Q,\nu}|\,\d\nu,
\end{split}
\]
for every $L<U$. Here Lemma \ref{lem:osc} has been used.

On the other hand, by Fatou's lemma,  
\[
\begin{split}
 \frac{1}{\nu(Q)}\int_Q|f-f_{Q,\nu}|\,\d\nu&\leq \liminf_{\substack{ L\to-\infty,\\   U\to\infty}} \frac{1}{\nu(Q)}\int_Q|\tau_{LU}f-(\tau_{LU}f)_{Q,\nu}|\,\d\nu\\
&\leq \sup_{L<U}\frac{1}{\nu(Q)}\int_Q|\tau_{LU}f-(\tau_{LU}f)_{Q,\nu}|\,\d\nu, 
\end{split}
\]
 and the result will follow.    Here  the local integrability of $f$ was used to ensure $f_{Q,\nu}=\lim_{\substack{ L\to-\infty,\\   U\to\infty}}(\tau_{LU}f)_{Q,\nu}$ by dominated convergence.
\end{proof}
 
\begin{lemma}\label{lem:osc2_general2}
Let $\mu,\nu$ be  Borel measures in $\mathbb{R}^n$ and let $f\in L^1_{\loc}(\mathbb{R}^n,\d \mu)$. Consider a lattice quasi-norm  $\|\cdot\|_{X(\d\nu)}$ compatible with  a functional $Y:\mathcal{Q}\to(0,\infty)$.   Then, for every cube $Q$ in $\mathbb{R}^n$,   
\[
\|f-f_{Q,\mu}\|_{X\left(Q,\frac{\d\nu}{Y(Q)}\right)}\leq\sup_{L<U}\|\tau_{LU}f-(\tau_{LU}f)_{Q,\mu} \|_{X\left(Q,\frac{\d\nu}{Y(Q)}\right)}.
\]

\end{lemma}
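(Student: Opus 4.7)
The plan is to realize $f-f_{Q,\mu}$ as a pointwise limit of the truncated objects $\tau_{LU}f-(\tau_{LU}f)_{Q,\mu}$ and then transfer the convergence to the quasi-norm via the Fatou property of Definition \ref{def:compatible}. Without loss of generality, I may assume the supremum on the right-hand side is finite, since otherwise there is nothing to prove.

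First, I would fix sequences $L_k\to -\infty$ and $U_k\to +\infty$ and set
$g_k:=|\tau_{L_k U_k}f-(\tau_{L_k U_k}f)_{Q,\mu}|$.
Pointwise everywhere one has $\tau_{L_k U_k}f(x)\to f(x)$, and as soon as $L_k<0<U_k$ the domination $|\tau_{L_k U_k}f|\leq |f|$ holds; since $f\in L^1_{\mathrm{loc}}(\mathbb{R}^n,\d\mu)$, the dominated convergence theorem gives $(\tau_{L_k U_k}f)_{Q,\mu}\to f_{Q,\mu}$. Consequently $g_k\to |f-f_{Q,\mu}|$ pointwise on $\mathbb{R}^n$, and in particular $\nu$-almost everywhere.

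Next, I would deduce a Fatou-type inequality for $\|\cdot\|_{X(Q,\d\nu/Y(Q))}$ from the monotone Fatou property. Setting $h_k:=\inf_{j\geq k} g_j$, each $h_k$ is a non-negative measurable function with $h_k\leq g_k$, so the lattice property gives $\|h_k\|_{X(Q,\d\nu/Y(Q))}\leq \|g_k\|_{X(Q,\d\nu/Y(Q))}$, while the pointwise convergence $g_k\to |f-f_{Q,\mu}|$ forces $h_k\uparrow |f-f_{Q,\mu}|$ monotonically. Applying the Fatou property in Definition \ref{def:compatible} to $\{h_k\}$ then produces
\[
\|f-f_{Q,\mu}\|_{X(Q,\d\nu/Y(Q))}=\lim_{k\to\infty}\|h_k\|_{X(Q,\d\nu/Y(Q))}\leq \liminf_{k\to\infty}\|g_k\|_{X(Q,\d\nu/Y(Q))}.
\]

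Finally, since each $\|g_k\|_{X(Q,\d\nu/Y(Q))}$ is exactly $\|\tau_{LU}f-(\tau_{LU}f)_{Q,\mu}\|_{X(Q,\d\nu/Y(Q))}$ for the specific choice $L=L_k$, $U=U_k$, the liminf on the right is at most $\sup_{L<U}\|\tau_{LU}f-(\tau_{LU}f)_{Q,\mu}\|_{X(Q,\d\nu/Y(Q))}$, yielding the desired inequality. The only delicate point I anticipate is the application of the monotone Fatou property, which as phrased in Definition \ref{def:compatible} presupposes membership of the limit in $X(\d\nu)$; under the standing assumption that the right-hand side is finite, the uniform bound $\sup_k\|h_k\|_{X(Q,\d\nu/Y(Q))}<\infty$ is enough to place $|f-f_{Q,\mu}|\chi_Q$ in $X(\d\nu)$ by the standard Fatou construction for lattice function spaces, after which the quoted property applies directly.
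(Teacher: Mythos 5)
Your proof is correct and follows the same route as the paper's, but supplies a detail the paper's one-line argument glosses over: the paper invokes the Fatou property of Definition \ref{def:compatible} directly to pass from pointwise convergence to a Fatou-type inequality for the quasi-norm, even though that property is stated only for monotone increasing sequences, and $|\tau_{L U}f-(\tau_{L U}f)_{Q,\mu}|$ is not monotone in $L,U$. Your construction of the monotone minorants $h_k=\inf_{j\geq k}g_j$ together with the lattice property is exactly the standard bridge needed to make the paper's appeal to the Fatou property rigorous. The one place where you are slightly hand-waving is the claim that finiteness of $\sup_k\|h_k\|_{X(Q,\d\nu/Y(Q))}$ already forces $|f-f_{Q,\mu}|\chi_Q\in X(\d\nu)$; as stated, property (1) of Definition \ref{def:compatible} presupposes the limit is in $X(\d\nu)$, so deducing membership requires a slightly stronger (``weak Fatou'') property than what the definition literally provides. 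The paper itself does not address this point at all, so your treatment is at least as careful as the original; it would be best to either note that the spaces of interest do satisfy the weak Fatou property, or simply to add the membership of $|f-f_{Q,\mu}|\chi_Q$ in $X(\d\nu)$ as a (harmless) hypothesis.
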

\begin{proof}
Let $Q$ be a cube in $\mathbb{R}^n$.   By Fatou's property \ref{property:general_Fatou})  in Definition \ref{def:compatible},
\[
\begin{split}
\|f -f_{Q,\mu }\|_{X\left(Q,\frac{\d\nu}{Y(Q)}\right)} &\leq \liminf_{\substack{ L\to-\infty,\\   U\to\infty}} \|\tau_{LU}f -(\tau_{LU}f)_{Q,\mu}\|_{X\left(Q,\frac{\d\nu}{Y(Q)}\right)} \\
&\leq  \sup_{L<U}\|\tau_{LU}f -(\tau_{LU}f)_{Q,\mu}\|_{X\left(Q,\frac{\d\nu}{Y(Q)}\right)}, 
\end{split}
\]
 and the result will follow.    Here  the local integrability of $f$ was used to ensure $f_{Q,\mu}=\lim_{\substack{ L\to-\infty,\\   U\to\infty}}(\tau_{LU}f)_{Q,\mu}$ by dominated convergence.
\end{proof}

\subsection{Proof of the self-improving theorem}
We are already in position to present  the proof of our   Theorem \ref{thm:self-improving_general}. 

 \begin{proof} Lemmas \ref{lem:osc2_general} and \ref{lem:osc2_general2} allow  to work under the assumption that $f$ is a bounded function. Since   $f$ is in $\mathrm{BMO}(\d\mu)$, for every cube $P$ in $\mathbb{R}^n$, the following inequality holds \begin{equation}\label{eq:startM2}
 \frac{1}{\mu(P)}\int_P \frac{|f(x)-f_{P,\mu}|}{\|f\|_{\mathrm{BMO}(\d\mu)}}\,\d\mu(x)\leq 1.
 \end{equation}
Let $L>1$ and let  $Q$ be any cube in $\mathbb{R}^n$. Inequality \eqref{eq:startM2} allows to apply the local Calder\'on-Zygmund decomposition to $\frac{f(x)-f_{Q,\mu}}{\|f\|_{\mathrm{BMO}(\d\mu)}}$ on $Q$  at level $L$. This gives a family of disjoint subcubes $\{Q_j\}_{j\in\mathbb{N}}\subset \mathcal{D}(Q)$ with the properties
\begin{equation}\label{CZ1M2}
L < \frac{1}{\mu(Q_j)}\int_{Q_j}\frac{|f(x)-f_{Q,\mu}|}{\|f\|_{\mathrm{BMO}(\d\mu)}}\,\d\mu(x) \leq c_\mu 2^{n_\mu} L.
\end{equation}
 For a simpler presentation, let us introduce the notation
\begin{equation*}
g(x):=\frac{f(x)-f_{Q,\mu}}{\|f\|_{\mathrm{BMO}(\d\mu)}}
\qquad\text{and} \qquad
g_j(x):=\frac{f(x)-f_{Q_j,\mu}}{\|f\|_{\mathrm{BMO}(\d\mu)}}.
\end{equation*}

The function $g\chi_Q$ can be decomposed as 
 \[
\begin{split}
g(x)\chi_Q(x)&=\sum_{j\in\mathbb{N}}g(x)\chi_{Q_j}(x)+g(x)\chi_{Q\backslash \bigcup_{j\in\mathbb{N}} Q_j}(x)\\
&=\sum_{j\in\mathbb{N}}\left[g_j(x)+\frac{f_Q-f_{Q_j,\mu}}{\|f\|_{\mathrm{BMO}(\d\mu)}}\right]\chi_{Q_j}(x)+g(x)\chi_{Q\backslash \bigcup_{j\in\mathbb{N}} Q_j}(x).
\end{split}
\]
On one hand, by Lebesgue differentiation theorem 
\[
\left|g(x)\chi_{Q\backslash \bigcup_{j\in\mathbb{N}} Q_j}(x)\right|\leq L,
\]
for $\mu$-almost every $x\in Q$ and, on the other hand, the second term in the sum 
$$
\sum_{j\in\mathbb{N}}\left[g_j(x)+\frac{f_Q-f_{Q_j,\mu}}{\|f\|_{\mathrm{BMO}(\d\mu)}}\right]\chi_{Q_j}(x)
$$ 
can be bounded as follows
\[
\left|\frac{f_Q-f_{Q_j,\mu}}{\|f\|_{\mathrm{BMO}(\d\mu)}}\right|\leq \frac{1}{\mu(Q_j)}\int_{Q_j}\frac{|f(x)-f_{Q,\mu}|}{\|f\|_{\mathrm{BMO}(\d\mu)}}\d\mu(x) \leq c_\mu  2^{n_\mu} L,
\]
for every $j\in\mathbb{N}$.

Therefore, the absolute value of $g$ can be bounded by 
\[
\begin{split}
\left|g(x)\right|\chi_Q(x)&\leq \sum_{j\in\mathbb{N}}\left| g_j(x)\right| \chi_{Q_j}(x)+ (c_\mu 2^{n_\mu}+1)L\chi_Q(x).
\end{split}
\]
Hence,  by using the  quasi-triangle inequality, the Average property \ref{property:general_average} from Definition \ref{def:compatible} and the disjointness of the cubes $Q_j$,
\begin{equation}\label{eq:main-argument-CZ}
 \left\| g\right\|_{X\left(Q,\frac{\d w}{Y(Q)}\right)}\leq  K   \left\|\sum_{j\in\mathbb{N}} g_j\chi_{Q_j}\right\|_{X\left(Q,\frac{\d w}{Y(Q)}\right)}+C_{K,\mu}L
\end{equation}
 where $C_{K,\mu}:=K(c_\mu2^{n_\mu}+1)$.

The key property of the cubes $\{Q_j\}_{j\in\mathbb{N}}$ in the Calder\'on-Zygmund decomposition at level $L$ of $ g(x)=\frac{f(x)-f_{Q,\mu}}{\|f\|_{\mathrm{BMO}(\d\mu)}} \chi_Q(x)$ is the fact that, by \eqref{CZ1M2},
\begin{equation}\label{eq:CZ_smallness}
\sum_{j\in\mathbb{N}}\mu(Q_j) \leq \sum_{j\in\mathbb{N}}\frac{1}{L}\int_{Q_j}|g(x)|\,\d\mu(x)=\frac{1}{L}\int_{Q}|g(x)|\,\d\mu(x)\leq \frac{\mu(Q)}{L},
\end{equation}
where \eqref{eq:startM2} has been used.
 
 A brief remark is in order here to explain the main idea. Note that in \eqref{eq:main-argument-CZ} we have essentially the same object on both sides of the inequality but at different levels. That is, we are trying to control the local average $\left\| g\right\|_{X\left(Q,\frac{\d w}{Y(Q)}\right)}$ in terms of a local average of  $\sum_j g_j\chi_{Q_j}$.
In the classical case of $L^p$ weighted norms, the localization of the functions $g_j\chi_{Q_j}$ allows to move the norm into the sum. Here, however, we will appeal to the generalized $A_\infty$ condition for the functional $Y$. To that end, let us define 
\begin{equation}\label{eq:X_varepsilonM2}
\mathbb{X} := \sup_{P\in\mathcal{Q}}\left\| \frac{f -f_{P,\mu}}{\|f\|_{\mathrm{BMO}(\d\mu)})}\right\|_{X\left(P,\frac{\d w}{Y(P)}\right)}. 
\end{equation}

This supremum is finite since, by the Average property \ref{property:general_average}  from Definition \ref{def:compatible}  and  the boundedness of $f$,  for any cube  $P\in\mathcal{Q}$,
\[
\left\| \frac{f -f_{P,\mu}}{\|f\|_{\mathrm{BMO}(\d\mu)}}\right\|_{X\left(P,\frac{\d w}{Y(P)}\right)}\leq 2\frac{\|f\|_{L^\infty(\mathbb{R}^n,\d\mu)}}{\|f\|_{\mathrm{BMO}(\d\mu)}}<\infty.
\]
This allows to make computations with $\mathbb{X}$, which allows to introduce the local averages of the functions $g_j\chi_{Q_j}$  as follows:

\begin{eqnarray*}
 \left\| g\right\|_{X\left(Q,\frac{\d w}{Y(Q)}\right)}&\leq   & K  \left\|\sum_{j\in\mathbb{N}} g_j\chi_{Q_j}\right\|_{X\left(Q,\frac{\d w}{Y(Q)}\right)}+ C_{K,\mu}\cdot L\\ 
 &= & K\left\|\sum_{j\in\mathbb{N}}
 \frac{ \left\| g_j\right\|_{X\left(Q_j,\frac{\d w}{Y(Q_j)}\right)}}{ \left\| g_j\right\|_{X\left(Q_j,\frac{\d w}{Y(Q_j)}\right)}}
g_j\chi_{Q_j}\right\|_{X\left(Q,\frac{\d w}{Y(Q)}\right)}+ C_{K,\mu}\cdot L\end{eqnarray*}

Using the $\mathbb{X}$ defined above, we get 
\begin{equation}\label{boundM2}
\left\| g\right\|_{X\left(Q,\frac{\d w}{Y(Q)}\right)}\le \mathbb{X}\cdot K
 \left\|\sum_{j\in\mathbb{N}}
 \frac{ g_j}{ \left\| g_j\right\|_{X\left(Q_j,\frac{\d w}{Y(Q_j)}\right)}}
\chi_{Q_j}\right\|_{X\left(Q,\frac{\d w}{Y(Q)}\right)}+ C_{K,\mu}\cdot L
\end{equation}

 Here is where the $A_\infty(\d\mu,X(w))$ condition for $Y$ pops in. In particular, we get a  bound  that does not depend on the cube $Q$ (recall that $\{Q_j\}_{j\in\mathbb{N}}$ and $Q$ satisfy the smallness relation \eqref{eq:CZ_smallness}), and then  one can take supremum at the left-hand side to get
	
\[
\mathbb{X}\leq \mathbb{X}\cdot {C_Y}\cdot K\cdot{\Psi}^{-1}\left(\frac{1}{L}\right)+ C_{K,\mu}\cdot L,
\]
where ${C_Y}$ is the constant in the definition of $A_\infty(\d\mu,X(w))$.
One can now choose $L> \max\left\{1,\left[ {\Psi} \left((C_Y\cdot K)^{-1} \right)\right]^{-1}\right\}$. Thanks to this, it is possible to isolate $\mathbb{X}$ at the left-hand side as follows
\[
\mathbb{X}  \left[1-{C_Y}\cdot K\cdot {\Psi}^{-1}\left(\frac{1}{L}\right)\right]\leq  C_{K,\mu}L.
\]
Equivalently,
\[
\mathbb{X} \leq  C_{K,\mu}\frac{L }{ 1-{C_Y} \cdot K\cdot{\Psi}^{-1}\left(\frac{1}{L}\right) }
\]
for every  $L>\max\left\{1,\left[{\Psi} \left(({C_Y}\cdot K)^{-1} \right)\right]^{-1}\right\}$. It just remains to optimize the right-hand side on $L>\max\left\{1,\left[ {\Psi}\left(({C_Y}\cdot K)^{-1} \right)\right]^{-1}\right\}$ to get the desired result.
 \end{proof}
\begin{remark}
{Assume that $Y$ is a functional that satisfies the hypothesis of the theorem we have just proved, and additionally satisfies the  following property: if $Q$ is any cube in $\mathbb{R}^n$, then 
\[
\frac{1}{Y(Q)}\int_Q f(x)\,\d\nu(x)\leq \|f\|_{X\left(Q,\frac{\d\nu}{Y(Q)}\right)}.
\]
That additional property combined with the Theorem just settled and Theorem \ref{pr:characterization_Ainfty} yields that whenever  $Y$ is an $ A_\infty(\d\mu,X(w))$ functional in the sense of Definition \ref{def:generalized_Ainfty2}, it happens that the weight $w$ satisfies the Fujii-Wilson type $A_{\infty,Y}(\d\mu)$ condition in item (2) of Theorem \ref{pr:characterization_Ainfty}.}
\end{remark}
 
 \section{Applications of the  self-improving theorem}\label{sec:applications}
 
As a first easy consequence of our general self-improving result, we include the following corollary regarding the classical $A_\infty$ condition. We show that it suffices to check the usual condition replacing the usual power functions by any increasing bijection. We remit the reader to \cite{Duoandikoetxea2016} for the classical definition and a number of equivalent conditions.
\begin{corollary}\label{cor:Ainfty-general}
Let $w$ be any weight satisfying, for some increasing bijection $\Phi:[0,1]\to[0,1]$, the condition 
 \[
 \frac{w\left(\bigcup_{j\in\mathbb{N}}Q_j\right)}{w(Q)}\leq C\Phi^{-1}\left[ \frac{\mu\left(\bigcup_{j\in\mathbb{N}}Q_j\right)}{\mu(Q)}\right]
 \]
 for every cube $Q$ and every $\{Q_j\}_{j\in\mathbb{N}}\in\Delta(Q)$. Then $w\in A_\infty(\d\mu)$.
\end{corollary}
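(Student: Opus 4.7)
The plan is to reduce the conclusion $w\in A_\infty(\d\mu)$ to Theorem \ref{pr:characterization_Ainfty} by first extracting a $\mathrm{BMO}$ embedding inequality from Theorem \ref{thm:self-improving_general}, applied to the scale $X(w):=L^1(w)$ with the functional $Y(Q):=w(Q)$.

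First I would verify that the pair $(L^1(w),Y)$ satisfies the compatibility conditions of Definition \ref{def:compatible}: Fatou's property is just the monotone convergence theorem, integrability of indicators of $w$-finite sets is immediate, and the average property $\|\chi_Q\|_{L^1(Q,\d w/w(Q))}=1$ is a direct computation.

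The central step is checking that $Y$ is an $A_\infty(\d\mu,L^1(w))$ functional in the sense of Definition \ref{def:generalized_Ainfty2} with $\Psi:=\Phi$ and $C_Y:=C$. This hinges on the observation that if $\{Q_j\}_{j\in\mathbb{N}}\in\Delta(Q)$ and $\|h_j\|_{L^1(Q_j,\d w/w(Q_j))}=1$, then $\int_{Q_j}|h_j|\,\d w=w(Q_j)$, and by disjointness
\[
\left\|\sum_{j\in\mathbb{N}}h_j\chi_{Q_j}\right\|_{L^1\left(Q,\frac{\d w}{w(Q)}\right)}=\frac{1}{w(Q)}\sum_{j\in\mathbb{N}}\int_{Q_j}|h_j|\,\d w=\frac{w\left(\bigcup_j Q_j\right)}{w(Q)}.
\]
The hypothesis of the corollary bounds this by $C\,\Phi^{-1}\bigl[\mu(\bigcup_j Q_j)/\mu(Q)\bigr]$, which is exactly \eqref{eq:general_smallness}. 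Note that for this identification one uses in an essential way that the $h_j$ are normalized in the $L^1(w)$ scale, so that the left-hand side collapses to a ratio of $w$-measures.

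With these hypotheses in place, Theorem \ref{thm:self-improving_general} produces a constant $B=B(\mu,\Phi)>0$ with
\[
\frac{1}{w(Q)}\int_Q|f(x)-f_{Q,\mu}|\,\d w(x)\leq B\,\|f\|_{\mathrm{BMO}(\d\mu)}
\]
for every cube $Q$ and every $f\in\mathrm{BMO}(\d\mu)$. Theorem \ref{pr:characterization_Ainfty} applied with $Y(Q)=w(Q)$ then upgrades this to the Fujii-Wilson bound $[w]_{A_{\infty,w}(\d\mu)}<\infty$, and the paper's discussion preceding Definition \ref{def:quasinorm} (together with \cite{Duoandikoetxea2016}) identifies this with the classical condition $w\in A_\infty(\d\mu)$. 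The most delicate moment in the plan is recognizing that the hypothesis of the corollary is precisely the specialization of Definition \ref{def:generalized_Ainfty2} to the $L^1(w)$ scale; once that identification is made, there is no genuine obstacle and the two earlier theorems chain together immediately.
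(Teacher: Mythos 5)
Your proof is correct and follows essentially the same path as the paper's: both apply Theorem \ref{thm:self-improving_general} with $Y(Q)=w(Q)$ on a Lebesgue scale over $\d w$, then invoke Theorem \ref{pr:characterization_Ainfty}. The paper writes this with $L^p$ whereas you fix $p=1$, which is the cleaner choice since item (1) of Theorem \ref{pr:characterization_Ainfty} is already an $L^1$ statement, and you also make explicit the verification (via disjointness and the $L^1$ normalization of the $h_j$) that the hypothesis of the corollary is exactly the $A_\infty(\d\mu,L^1(w))$ condition of Definition \ref{def:generalized_Ainfty2} — a step the paper leaves implicit.
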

\begin{proof} 
Consider $X(w):=L^p(\d \mu)$ and $Y=w$ for a weight $w\in L^1_{\loc}(\mathbb{R}^n,\d\mu)$. By Theorem \ref{thm:self-improving_general}, we get that the weight $w$ satisfies also  that 
 \[
 \sup_{\|f\|_{\mathrm{BMO}(\d\mu)}=1} \|f-f_{Q,\mu}\|_{L^p\left(Q,\frac{\d w}{w(Q)}\right)}<\infty,
 \]
 and so, by Theorem \ref{pr:characterization_Ainfty}, it happens that $w\in A_\infty(\d\mu)$. 
\end{proof}

 In the sequel, we present two particular examples of application of Theorem \ref{thm:self-improving_general}.

\subsection{\texorpdfstring{$\mathrm{BMO}$}{BMOtype}-type improvement at the Orlicz spaces scale} 
 
The first example has to do with Orlicz norms for submultiplicative Young functions. The aim is to write a quantitative self-improving result for the control on the mean oscillations of  $\mathrm{BMO}(\d\mu)$ functions to a control on Orlicz mean oscillations. 

A special type of convex function is used to define Orlicz norms.  
  
\begin{definition}
A convex function $\phi:[0, \infty)\rightarrow[0, \infty)$ is said to be a Young function if $\phi(0)=0$, $\phi(1)=1$ and $\lim_{t\to\infty}\phi(t)=\infty$.  We will say that $\phi$ is a quasi-submultiplicative Young function with associated constant $c>0$ if, additionally, $\phi(t_1\cdot t_2)\leq c\phi(t_1)\cdot \phi(t_2)$ for  every $t_1,t_2\geq0$.  If $c=1$ we will simply say that $\phi$ is submultiplicative. If there is $k>2$ such that $\phi(2t)\leq k\phi(t)$ for every $t\geq t_0$ for some $t_0\geq 0$, we will say that $\phi$ satisfies the $\Delta_2$ (or doubling) condition.
\end{definition}

\begin{example}
As examples of doubling Young functions one can find the power functions $\phi_p(t):=t^p$.  These are clearly doubling functions since they are submultiplicative. In general, every submultiplicative Young function $\phi$ is a doubling Young function but not only submultiplicative functions satisfy this condition, as this is also fulfilled by quasi-submultiplicative  Young functions such as $\phi_{p,\alpha}(t):=\log(e+1)^{-\alpha}t^p\log(e+t)^\alpha$, $p\geq1$, $\alpha>0$.
\end{example}

  Given any Young function $\phi$, any Borel measure $\nu$ in $\mathbb{R}^n$  and any cube $Q$ in $\mathbb{R}^n$ one can define the $\phi(L)(\nu)$-mean average of a function $f$ over $Q$ with the Luxemburg norm 
\[
\|f\|_{\phi(L)\left(Q,\frac{\d\nu}{\nu(Q)}\right)}:=\inf\left\{\lambda>0:\frac{1}{\nu(Q)}\int_Q \phi\left(\frac{|f(x)|}{\lambda}\right)\,\d\nu(x)\leq 1\right\},
\] 
 which is the localized version of the Luxemburg norm defining the Orlicz space $\phi(L)(\mathbb{R}^n,\d\nu)$ given by the finiteness of the norm 
 \[
\|f\|_{\phi(L)\left(\mathbb{R}^n,\d\nu\right)}:=\inf\left\{\lambda>0: \int_{\mathbb{R}^n} \phi\left(\frac{|f(x)|}{\lambda}\right)\,\d\nu(x)\leq 1\right\}.
\] 
 
Note that if $\phi_1,\phi_2$ are   Young functions satisfying $\phi_1(t)\leq \phi_2(kt)$ for $t>t_0$, for some $k>0$ and $t_0\geq0$, then $\phi_2(L)\left(Q,\frac{\d\nu}{\nu(Q)}\right)\subset \phi_1(L)\left(Q,\frac{\d\nu}{\nu(Q)}\right)$ (see \cite[Theorem 13.1]{Krasnoselskii1961}). Therefore one can find infinitely many Orlicz spaces different from a Lebesgue space between any two Lebesgue spaces $L^p\left(Q,\frac{\d\nu}{\nu(Q)}\right)$ and $L^q\left(Q,\frac{\d\nu}{\nu(Q)}\right)$, $p<q$.

Orlicz spaces are examples of quasi-normed function spaces with a good quasi-norm as introduced in the beginning of this section and moreover they are Banach function spaces, i.e. the quasi-norm $\|\cdot\|_{\phi(L)\left(\mathbb{R}^n, \d\nu \right)}$ is in fact a norm and the resulting space is complete. 

The following lemma shows that, in the Orlicz setting and for the particular case $Y(Q):=\mu(Q)$, it suffices to check the $A_\infty(\d\mu, \phi(L)(\d \mu))$ condition in Definition \ref{def:generalized_Ainfty2}  just for characteristic functions instead of considering the arbitrary functions $h_j$.
\begin{lemma}\label{lem:reduction} 
Let $\phi$ be a  quasi-submultiplicative Young function with associated quasi-submultiplicative constant $c>0$. The functional $Y:\mathcal{Q}\to(0,\infty)$ given by $Y(Q):=\mu(Q)$ is an  $A_\infty(\d\mu, \phi(L)(\d \mu))$ functional with associated increasing bijection $\Phi:[0,1]\to[0,1]$   if and only if there is $C>0$ such that
\begin{equation}\label{eq:general_smallness-char}
 \left\|\sum_{j\in\mathbb{N}}   \chi_{Q_j } \right\|_{\phi(L)\left(Q,\frac{\d \mu}{\mu(Q)}\right)}\leq  C\Phi^{-1}\left[\frac{\mu\left(\bigcup_{j\in\mathbb{N}} Q_j\right)}{\mu(Q)}\right]
\end{equation}
for every $\{Q_j\}_{j\in\mathbb{N}}\in\Delta(Q)$, $Q\in\mathcal{Q}$. 
\end{lemma}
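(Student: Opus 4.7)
The plan is to prove the two implications separately. The forward direction is immediate: for any cube $R$, one has $\|\chi_R\|_{\phi(L)\left(R,\frac{\d\mu}{\mu(R)}\right)}=\inf\{\lambda>0:\phi(1/\lambda)\le 1\}=1$, using only $\phi(0)=0$, $\phi(1)=1$ and monotonicity of $\phi$. Hence the choice $h_j=\chi_{Q_j}$ satisfies the normalization in Definition \ref{def:generalized_Ainfty2}, and plugging it into \eqref{eq:general_smallness} yields exactly \eqref{eq:general_smallness-char}.

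For the reverse direction, the idea is to use quasi-submultiplicativity of $\phi$ to decouple the size of $h_j$ from the scaling parameter $\lambda$. Fix a cube $Q$, a family $\{Q_j\}_{j\in\mathbb{N}}\in\Delta(Q)$, and normalized functions $h_j$, i.e. $\frac{1}{\mu(Q_j)}\int_{Q_j}\phi(|h_j|)\,\d\mu\le 1$ for every $j$. Set $\lambda_0:=C\,\Phi^{-1}\bigl[\mu(\bigcup_j Q_j)/\mu(Q)\bigr]$, where $C$ is the constant in \eqref{eq:general_smallness-char}. Since $\sum_j\chi_{Q_j}$ only takes the values $0$ and $1$, the assumption \eqref{eq:general_smallness-char} together with the definition of the Luxemburg norm forces
\[
\phi(1/\lambda_0)\cdot\frac{\mu\bigl(\bigcup_j Q_j\bigr)}{\mu(Q)}\le 1.
\]

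Now, for any $\lambda>0$, using the pairwise disjointness of the $Q_j$, the quasi-submultiplicativity $\phi(ab)\le c\,\phi(a)\phi(b)$, and the normalization of the $h_j$,
\[
\frac{1}{\mu(Q)}\int_Q\phi\Bigl(\frac{|\sum_j h_j\chi_{Q_j}|}{\lambda}\Bigr)\d\mu=\frac{1}{\mu(Q)}\sum_{j}\int_{Q_j}\phi(|h_j|/\lambda)\,\d\mu\le c\,\phi(1/\lambda)\cdot\frac{\mu\bigl(\bigcup_j Q_j\bigr)}{\mu(Q)}.
\]
Testing $a=b=1$ in the quasi-submultiplicativity condition gives $c\ge 1$, so the standard convexity estimate $\phi(t/M)\le\phi(t)/M$ valid for $M\ge 1$ and Young functions (a consequence of $\phi(0)=0$ and convexity) yields $\phi(1/(c\lambda_0))\le \phi(1/\lambda_0)/c$. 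Choosing $\lambda=c\lambda_0$ therefore gives $c\,\phi(1/(c\lambda_0))\cdot\mu(\bigcup_j Q_j)/\mu(Q)\le\phi(1/\lambda_0)\cdot\mu(\bigcup_j Q_j)/\mu(Q)\le 1$, and so $c\lambda_0$ is admissible in the Luxemburg infimum. This produces
\[
\Bigl\|\sum_{j}h_j\chi_{Q_j}\Bigr\|_{\phi(L)\left(Q,\frac{\d\mu}{\mu(Q)}\right)}\le c\lambda_0=cC\,\Phi^{-1}\Bigl[\frac{\mu\bigl(\bigcup_j Q_j\bigr)}{\mu(Q)}\Bigr],
\]
which is exactly the $A_\infty(\d\mu,\phi(L)(\d\mu))$ condition with constant $cC$.

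No serious obstacle is anticipated; the main technical point is handling the quasi-submultiplicativity constant $c$, for which the elementary convexity inequality $\phi(t/M)\le\phi(t)/M$ ($M\ge 1$) is exactly what is needed to absorb the extra factor into the scaling parameter without degrading the dependence on $\Phi^{-1}[\mu(\bigcup_j Q_j)/\mu(Q)]$.
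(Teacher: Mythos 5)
Your proof is correct and follows essentially the same strategy as the paper's: split the modular over the disjoint $Q_j$, apply quasi-submultiplicativity to pull out the $\phi(1/\lambda)$ factor, use the modular form of the normalization $\|h_j\|_{\phi(L)(Q_j,\d\mu/\mu(Q_j))}=1$, and observe that $\|\chi_{Q_j}\|_{\phi(L)(Q_j,\d\mu/\mu(Q_j))}=1$ for the forward direction. The one place you are slightly more careful than the paper is in explicitly invoking $\phi(t/M)\le\phi(t)/M$ for $M\ge 1$ to absorb the quasi-submultiplicativity constant $c$ into the scaling parameter; the paper states the resulting norm inequality $\|\sum_j h_j\chi_{Q_j}\|\le c\|\sum_j\chi_{Q_j}\|$ without spelling out this convexity step, so your version makes the constant tracking a bit more transparent.
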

\begin{proof}  
 Indeed, consider a cube $Q$, a sequence $\{Q_j\}_{j\in\mathbb{N}}$ of disjoint subcubes of $Q$ and $\{h_j\}_{j\in\mathbb{N}}$ a sequence of functions satisfying $\|h_j\|_{\psi(L)\left(Q_j,\frac{\d\mu}{\mu(Q_j)}\right)}=1$ for every $j\in\mathbb{N}$. Then,
\[
\begin{split}
\frac{1}{\mu(Q)} \int_Q \phi\left(\frac{\sum_{j\in\mathbb{N}} h_j\chi_{Q_j} }{\lambda}\right)\,\d\mu(x)& =  \sum_{j\in\mathbb{N}} \frac{\mu(Q_j)}{\mu(Q)}\frac{1}{\mu(Q_j)}\dashint_{Q_j} \phi\left(\frac{h_j (x) }{\lambda}\right)\,\d\mu(x)\\
&\leq c\sum_{j\in\mathbb{N}} \frac{\mu(Q_j)}{\mu(Q)}\phi\left(\frac{ 1}{\lambda}\right)\frac{1}{\mu(Q_j)}\int_{Q_j}\phi(h_j(x)) \,\d\mu(x)\\
 &=  c\sum_{j\in\mathbb{N}}\frac{1}{\mu(Q)} \int_{Q_j}\phi\left(\frac{ 1 }{\lambda}\right)\,\d\mu(x)\\
 &\leq  c \frac{1}{\mu(Q)}\int_{Q}\phi\left(\frac{ \sum_{j\in\mathbb{N}} \chi_{Q_j}(x)  }{\lambda}\right)\,\d\mu(x).
\end{split}
\]
Hence, 
\[
\begin{split}
\left\|\sum_{j\in\mathbb{N}} h_j\chi_{Q_j}  \right\|_{\phi(L)\left(Q,\frac{\d\mu}{\mu(Q)}\right)} &\leq c \left\|\sum_{j\in\mathbb{N}}  \chi_{Q_j} \right\|_{\psi(L)\left(Q,\frac{\d\mu}{\mu(Q)}\right)}.
\end{split}
\]
The result follows from the above computation and the fact that characteristic functions have  average $1$.
\end{proof}

As a consequence, we get that the underlying measure $\mu$ defines an $A_\infty(\d\mu,\phi(L)(\d\mu))$ functional for any Orlicz norm induced by a quasi-submultiplicative Young function $\phi$.
\begin{lemma}\label{lem:OrliczGoodFunct}
Let $\phi$ be a  quasi-submultiplicative Young function with associated quasi-submultiplicative constant $c>0$. The functional $Y:\mathcal{Q}\to(0,\infty)$ given by 
$Y(Q):=\mu(Q)$ is an  $A_\infty(\d\mu, \phi(L)(\d \mu))$ functional with associated increasing bijection given by $\Psi(t):= 1/\phi^{-1}(1/t)$.
\end{lemma}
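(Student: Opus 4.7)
The plan is to invoke Lemma \ref{lem:reduction}, which reduces the problem of verifying the $A_\infty(\d\mu,\phi(L)(\d\mu))$ condition from Definition \ref{def:generalized_Ainfty2} to checking only the special case when the test functions are characteristic functions of the subcubes in $\Delta(Q)$. The quasi-submultiplicativity of $\phi$ is precisely the hypothesis that powers that reduction; here, downstream, it will not be needed again. So the whole task collapses to producing the bound
\[
 \left\|\sum_{j\in\mathbb{N}}\chi_{Q_j}\right\|_{\phi(L)\left(Q,\frac{\d\mu}{\mu(Q)}\right)}\leq C_Y\,\Psi^{-1}\!\left[\frac{\mu\left(\bigcup_{j\in\mathbb{N}}Q_j\right)}{\mu(Q)}\right]
\]
for every cube $Q$ and every $\{Q_j\}_{j\in\mathbb{N}}\in\Delta(Q)$, with $C_Y=1$ and the bijection specified in the statement.

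The main step is then a direct Luxemburg-norm calculation. Since the family $\{Q_j\}$ is pairwise disjoint, the sum inside the norm is the characteristic function of $E:=\bigcup_{j\in\mathbb{N}}Q_j$. Setting $\alpha:=\mu(E)/\mu(Q)\in[0,1]$ and exploiting $\phi(0)=0$, I would compute
\[
 \frac{1}{\mu(Q)}\int_Q\phi\!\left(\frac{\chi_E(x)}{\lambda}\right)\d\mu(x)=\frac{\mu(E)}{\mu(Q)}\,\phi(1/\lambda)=\alpha\,\phi(1/\lambda).
\]
This is $\leq 1$ if and only if $1/\lambda\leq\phi^{-1}(1/\alpha)$, i.e.\ $\lambda\geq 1/\phi^{-1}(1/\alpha)$, since $\phi$ is a continuous increasing bijection of $[0,\infty)$. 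Taking the infimum in the definition of the Luxemburg norm therefore gives the closed form
\[
 \|\chi_E\|_{\phi(L)\left(Q,\frac{\d\mu}{\mu(Q)}\right)}=\frac{1}{\phi^{-1}(1/\alpha)},
\]
which matches the bijection appearing in the statement exactly. Hence the required inequality holds with $C_Y=1$, and the lemma follows.

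There is no genuine obstacle here; the proof is essentially one line of calculation once the reduction lemma has been applied. The only points requiring mild care are that $\phi$ is a bijection on $[0,\infty)$ so that $\phi^{-1}(1/\alpha)$ makes sense for every $\alpha\in(0,1]$, and that the Luxemburg norm is correctly read off as an infimum, not a minimum; the latter is in fact attained because $\phi$ is continuous, which justifies the equality above.
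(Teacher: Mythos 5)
Your proof is correct and follows essentially the same route as the paper: reduce to characteristic test functions via Lemma \ref{lem:reduction}, then compute the local Luxemburg norm of $\chi_{\bigcup_j Q_j}$ directly (the paper leaves the sum unmerged and only records $\leq$, while you observe the norm is actually attained, but the calculation is identical). Note only that the quantity $1/\phi^{-1}(1/\alpha)$ you obtain is what the defining inequality \eqref{eq:general_smallness} and the downstream Corollary \ref{cor:OrliczBMO} use as $\Psi^{-1}(\alpha)$ rather than $\Psi(\alpha)$; the lemma statement appears to carry a typo there, and your computation agrees with the intended reading, exactly as the paper's own proof does.
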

\begin{proof}
By the above lemma, we just have to check the condition for characteristic functions.  Let us then take a cube $Q$ and any family $\{Q_j\}_{j\in\mathbb{N}}\in\Delta(Q)$. Then, if one considers $\lambda_0:=1/\phi^{-1}\left[\mu(Q)/\sum_{j\in\mathbb{N}}\mu(Q_j)\right]$,
\[
\begin{split}
\frac{1}{\mu(Q)}\int_Q\phi\left(\frac{\sum_{j\in\mathbb{N}}\chi_{Q_j}(x)}{\lambda_0} \right)\,\d\mu(x) &= \sum_{j\in\mathbb{N}} \frac{1}{\mu(Q)}\int_{Q_j}\phi\left(\frac{\chi_{Q_j}(x)}{\lambda_0}\right)\,\d\mu(x)\\
&= \sum_{j\in\mathbb{N}}\frac{\mu(Q_j)}{\mu(Q)}\phi\left(\frac{1}{\lambda_0}\right)\\
&=\sum_{j\in\mathbb{N}}\frac{\mu(Q_j)}{\mu(Q)}\frac{\mu(Q)}{\sum_{j\in\mathbb{N}}\mu(Q_j)}=1.
\end{split}
\]
This implies that, for any cube $Q$ and any family $\{Q_j\}_{j\in\mathbb{N}}\in \Delta(Q)$,
\[
\begin{split}
\left\| \sum_{j\in\mathbb{N}}\chi_{Q_j} \right\|_{\phi(L)\left(Q,\frac{\d\mu}{\mu(Q)}\right)}&=\inf\left\{ \lambda>0:\frac{1}{\mu(Q)}\int_Q\phi\left(\frac{\sum_{j\in\mathbb{N}}\chi_{Q_j}(x)}{\lambda}\right)\,\d\mu(x) \right\}\\
&\leq \frac{1}{\phi^{-1}\left[\frac{\mu(Q)}{\sum_{j\in\mathbb{N}}\mu(Q_j)}\right]}.
\end{split}
\]
The smallness condition is then satisfied for the increasing bijection of $[0,1]$ given by $\Psi^{-1}(t):= 1/\phi^{-1}(1/t)$.

\end{proof}

From the above lemma it can be deduced,  through a simple application of Theorem \ref{thm:self-improving_general},  the following general result for Orlicz spaces.
 \begin{corollary}\label{cor:OrliczBMO}
 Let $\mu$ be a doubling measure in $\mathbb{R}^n$. Let $\phi$ be a quasi-submultiplicative Young function with associated quasi-submultiplicative constant $c>0$.  Let us further assume that $\phi$ is differentiable for $t>1$ and let $[\phi]_1$ and $[\phi]_2$ be the best constants satisfying $[\phi]_1\phi(t)\leq t\phi'(t)\leq [\phi]_2\phi(t)$, $t>1$.  If $f\in \mathrm{BMO}(\d\mu)$  then  
 \begin{equation}
 \| f-f_{Q,\mu}\|_{\phi(L)\left(Q,\frac{\d\mu}{\mu(Q)}\right)}\leq   c_\mu 2^{n_\mu}        \phi\left[c\left(1+\frac{1}{[\phi]_1}\right)\right]\left([\phi]_2+1\right) \|f\|_{\mathrm{BMO}(\d\mu)}
 \end{equation}
 for every cube $Q$ in $\mathbb{R}^n$.  
 \end{corollary}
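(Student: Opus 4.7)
The plan is to apply the general self-improving Theorem \ref{thm:self-improving_general} with the quasi-normed function space $X(w):=\phi(L)(\d\mu)$ (a Banach function space, so the geometric constant is $K=1$) and the functional $Y(Q):=\mu(Q)$. The compatibility requirements of Definition \ref{def:compatible} hold routinely for Orlicz spaces: Fatou's property and the presence of $\chi_E$ for finite-measure sets are standard, while the average property reduces to $\|\chi_Q\|_{\phi(L)(Q,\d\mu/\mu(Q))}=1/\phi^{-1}(1)=1$. Lemma \ref{lem:OrliczGoodFunct} further supplies the $A_\infty(\d\mu,\phi(L)(\d\mu))$ property with constant $C_Y=c$ and with the associated bijection satisfying $\Psi^{-1}(s)=1/\phi^{-1}(1/s)$, as is apparent from its proof; in particular $\Psi^{-1}(1/L)=1/\phi^{-1}(L)$.

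Substituting these data into the closed form for $C(\mu,\Psi)$, the corollary reduces to bounding
\[
\inf_{L>\max\{1,\,\phi(c)\}}\; \frac{L}{1 - c/\phi^{-1}(L)},
\]
where the admissibility condition uses $[\Psi(1/c)]^{-1}=\phi(c)$. I will evaluate the integrand at the explicit value $L_0:=\phi\bigl(c(1+1/[\phi]_1)\bigr)$, chosen precisely so that $\phi^{-1}(L_0)=c(1+1/[\phi]_1)$. Admissibility follows from two elementary observations: quasi-submultiplicativity combined with $\phi(1)=1$ forces $c\geq 1$, while convexity with $\phi(0)=0$ forces $[\phi]_1\geq 1$; combining these yields $c(1+1/[\phi]_1)>c\geq 1$, hence $L_0>\phi(c)\geq 1$.

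With this choice the denominator collapses neatly:
\[
1 - \frac{c}{\phi^{-1}(L_0)} = 1 - \frac{1}{1+1/[\phi]_1} = \frac{1}{[\phi]_1+1},
\]
so the integrand equals $\phi(c(1+1/[\phi]_1))([\phi]_1+1)$. Finally, the inequality $[\phi]_1\leq [\phi]_2$ (immediate from the definitions of the best constants in $[\phi]_1\phi(t)\leq t\phi'(t)\leq [\phi]_2\phi(t)$) weakens the factor $[\phi]_1+1$ to $[\phi]_2+1$, yielding the stated constant once multiplied by the prefactor $c_\mu 2^{n_\mu}$ coming from Theorem \ref{thm:self-improving_general}.

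I do not anticipate a substantive obstacle. The only delicate step is bookkeeping: reading off $\Psi^{-1}(1/L)=1/\phi^{-1}(L)$ from the proof of Lemma \ref{lem:OrliczGoodFunct} so that $\phi^{-1}$ (and not $\phi$) appears in the infimum, and then making the specific choice of $L_0$ that makes the algebra collapse to exactly $\phi(c(1+1/[\phi]_1))([\phi]_1+1)$. The differentiability hypothesis on $\phi$ and the constants $[\phi]_1,[\phi]_2$ play no role in Theorem \ref{thm:self-improving_general} itself; they merely serve to express the optimized infimum in the closed form stated in the corollary, with $[\phi]_1$ produced by the algebraic cancellation above and $[\phi]_2$ absorbing $[\phi]_1$ in the final step.
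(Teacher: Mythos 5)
Your proposal is correct and takes a genuinely cleaner route than the paper. Both arguments apply Theorem \ref{thm:self-improving_general} with $X(w)=\phi(L)(\d\mu)$, $Y(Q)=\mu(Q)$, $K=1$, $C_Y=c$, and $\Psi^{-1}(t)=1/\phi^{-1}(1/t)$, so both reduce the problem to bounding
\[
\inf_{L>\max\{1,\phi(c)\}}\ \frac{L}{1-c/\phi^{-1}(L)}.
\]
The paper then treats this as a calculus problem: it characterizes the minimizer implicitly by $L=\phi\bigl[c+cL[\phi^{-1}]'(L)/\phi^{-1}(L)\bigr]$, argues for its existence, makes the substitution $L=\phi(s)$, and only then invokes the two-sided bound $[\phi]_1\le s\phi'(s)/\phi(s)\le[\phi]_2$ to control the two resulting factors separately. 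You instead avoid optimization entirely and simply evaluate the integrand at the explicit test point $L_0=\phi\bigl(c(1+1/[\phi]_1)\bigr)$, after verifying admissibility via $c\ge1$ (from $\phi(1)=1$ and quasi-submultiplicativity) and $[\phi]_1\ge1$ (from convexity with $\phi(0)=0$). The denominator collapses exactly and you obtain $\phi\bigl(c(1+1/[\phi]_1)\bigr)([\phi]_1+1)$, which is actually \emph{sharper} than the paper's bound; you then relax $[\phi]_1+1\le[\phi]_2+1$ only to match the stated constant. This plug-in argument is shorter, bypasses the existence discussion for the critical point, and needs differentiability of $\phi$ only to have $[\phi]_1,[\phi]_2$ defined, not to carry out an optimization. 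All the supporting claims you cite (the compatibility properties from Definition \ref{def:compatible}, and the $A_\infty(\d\mu,\phi(L)(\d\mu))$ property with bijection satisfying $\Psi^{-1}(t)=1/\phi^{-1}(1/t)$ and constant bounded by $c$) are exactly what Lemmas \ref{lem:reduction} and \ref{lem:OrliczGoodFunct} supply.
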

 
 \begin{proof}
 The inequality follows by a direct application of Theorem \ref{thm:self-improving_general}. The only thing which remains is to prove a bound for the constant $C\left(\mu, \Psi\right)$. Observe that, as  the constant $C$ in the $A_\infty(\d\mu,\phi(L)(\d\mu))$ condition of $Y(Q):=\mu(Q)$ is clearly less than $c$, we have that 
\[C\left(\mu, \Psi\right)\leq\inf_{L>\max\{1,\Psi(c^{-1})^{-1}\}}      c_\mu 2^{n_\mu}\frac{L }{ 1-c {\Psi}^{-1}\left(\frac{1}{L}\right) }.\]
 Since by the preceding lemma in this case we have $\Psi^{-1}(t)=1/\phi^{-1}(1/t)$, then we can write
  \[C\left(\mu, \Psi\right)=\inf_{L>\max\{1,\Psi(c^{-1})^{-1}\}}      c_\mu 2^{n_\mu}\frac{L\phi^{-1}(L) }{ {\phi}^{-1}\left(L\right)  -c} .\]
 
 It is a simple Real Analysis exercise to find that the smallest value for the above function of $L$ is attained at the smallest $L>\max\{1,\Psi(c^{-1})^{-1}\}$ satisfying the identity
 \[
 L=\phi\left[c+c\frac{L[\phi^{-1}]'(L)}{\phi^{-1}(L)}\right].
 \]
 Observe that such an $L$ exists always because $\phi\left[c+c\frac{L[\phi^{-1}]'(L)}{\phi^{-1}(L)}\right]$ is a bounded function of $L$. Indeed, observe that, as $\phi$ is an increasing function, we can make the change of variables $L=\phi(s)$ to get 
 \[
 \phi\left[c+c\frac{L[\phi^{-1}]'(L)}{\phi^{-1}(L)}\right]=\phi\left[c+c\frac{\phi(s)}{s\phi'(s)}\right]\leq \phi\left[c+c\frac{1}{[\phi]_1}\right]
 \]
The existence is established then by checking that $\phi\left[c+c\frac{L[\phi^{-1}]'(L)}{\phi^{-1}(L)}\right]$ is greater than $1$ or greater than $\Psi(c^{-1})^{-1}$, depending on whether $\max\{1,\Psi(c^{-1})^{-1}\}$ is one quantity or the other.   If $\max\{1,\Psi(c^{-1})^{-1}\}=1$ then it happens that $c=1$ (note that $c$ is not allowed to be below $1$ by condition $\phi(1)=1$) and  then \[
 \phi\left[c+c\frac{L[\phi^{-1}]'(L)}{\phi^{-1}(L)}\right]>1 \iff    \frac{L[\phi^{-1}]'(L)}{\phi^{-1}(L)} >0,
 \]
 which trivially holds. In case $\max\{1,\Psi(c^{-1})^{-1}\}=\Psi(c^{-1})^{-1}$, one just has to check the existence of $L>\Psi(c^{-1})^{-1}$ such that
 \[
 L=\phi\left[c+c\frac{L[\phi^{-1}]'(L)}{\phi^{-1}(L)}\right],
 \]
 but note that $L>\Psi(c^{-1})^{-1}\iff c^{-1}>\Psi^{-1}(L)$, which for our choice of $\Psi^{-1}$ reads   $\phi^{-1}(L)>c$. By the continuity properties of the function under consideration, the desired existence will be proved if 
 \[
 c+c\frac{L[\phi^{-1}]'(L)}{\phi^{-1}(L)}>c,
 \]
 and this holds trivially because $\phi^{-1}$ is a positive increasing function.

 Therefore, by calling $A_\phi$ the set of those $L >\max\{1,\Psi(c^{-1})^{-1}\}$ satisfying the condition $L=\phi\left[c+c\frac{L[\phi^{-1}]'(L)}{\phi^{-1}(L)}\right],$
 \[
 \begin{split}
 C\left(\mu, \Psi\right)&=\inf_{L\in A_\phi}c_\mu 2^{n_\mu} \phi\left[c\left(1+ \frac{L[\phi^{-1}]'(L)}{\phi^{-1}(L)}\right)\right] \left( \frac{\phi^{-1}(L)}{L[\phi^{-1}]'(L)}+1\right)\\
 &\leq \inf_{\phi(s)\in A_\phi}c_\mu 2^{n_\mu} \phi\left[c\left(1+\frac{\phi(s)}{s\phi'(s)}\right)\right]\left(\frac{s\phi'(s)}{\phi(s)}+1\right),
 \end{split}
 \]  
 where the change of variables $L=\phi(s)$, $s>1$ has  been used again. 
 \end{proof}
 
 \begin{example}\label{ex:orlicz2}
 As an application  we compute the example \[
 {\phi}_{p,\alpha}(t)=t^{p}(1+\log^{+}(t))^{\alpha},\qquad \alpha>0, \quad p\geq1,
\]
which is  a submultiplicative Young function  defining the   Orlicz space $L^p(\log L)^\alpha$. First we note that, indeed, $ {\phi}_{p,\alpha}$ is submultiplicative, i.e.
\[
 {\phi}_{p,\alpha}(st)\leq {\phi}_{p,\alpha}(s) {\phi}_{p,\alpha}(t),\qquad s,t>0.
\]
We note that if $0<s<1$ and/or $0<t<1$ the inequality trivially holds. Hence we
shall assume that $s,t>1$. Note that then
\[
 {\phi}_{p,\alpha}(st)=s^{p}t^{p}(1+\log^+(st))^{\alpha}=s^{p}t^{p}(1+\log(st))^{\alpha}
\]
and it suffices to show that 
\[
1+\log(st)\leq(1+\log(s))(1+\log(t))
\]
but 
\begin{eqnarray*}
1+\log(st) & = & 1+\log(s)+\log(t)\\
& \leq & 1+\log(s)+\log(t)+\log(s)\log(t)\\
& = & (1+\log(s))(1+\log(t))
\end{eqnarray*}

and hence we are done.

Now observe that

\[
 {\phi}_{p,\alpha}'(t)=\begin{cases}
pt^{p-1} & \text{if}\quad t<1,\\
pt^{p-1}(1+\log(t))^{\alpha}+\alpha t^{p-1}(1+\log(t))^{\alpha-1} & \text{if}\quad t>1.
\end{cases}
\]
If $t>1$, then 
\[
 {\phi}_{p,\alpha}(t)=t^{p}(1+\log^{+}(t))^{\alpha}=t^{p}(1+\log(t))^{\alpha},
\]
and
\[
t {\phi}_{p,\alpha}'(t)=pt^{p}(1+\log(t))^{\alpha}+\alpha t^{p}(1+\log(t))^{\alpha-1}.
\]
Hence, 
\[
\frac{t {\phi}_{p,\alpha}'(t)}{ {\phi}_{p,\alpha}(t)}=p+\frac{\alpha}{1+\log(t)},\qquad t>1,
\]
and we then  have that
\[
p\leq\frac{t {\phi}_{p,\alpha}'(t)}{ {\phi}_{p,\alpha}(t)}\leq p+\alpha.
\]
These bounds are optimal for $t>1$, and so
we have that $[ {\phi}_{p,\alpha}]_{1}=p$ and $[ {\phi}_{p,\alpha}]_{2}=p+\alpha$.
By Corollary \ref{cor:OrliczBMO}, 
\[
\|f-f_{Q,\mu}\|_{ {\phi}_{p,\alpha}(L)\left(Q,\frac{\d\mu}{\mu(Q)}\right)}\leq c_{\mu}2^{n_{\mu}} {\phi}_{p,\alpha}\left(1+\frac{1}{[ {\phi}_{p,\alpha}]_{1}}\right)\left([ {\phi}_{p,\alpha}]_{2}+1\right)\|f\|_{\mathrm{BMO}(\d\mu)},
\]
and observe that, in this particular case,
\[
 {\phi}_{p,\alpha}\left(1+\frac{1}{[ {\phi}_{p,\alpha}]_{1}}\right)\left([ {\phi}_{p,\alpha}]_{2}+1\right)= {\phi}_{p,\alpha}\left(1+\frac{1}{p}\right)\left(p+\alpha+1\right),
\]
and 

\[
 {\phi}_{p,\alpha}\left(1+\frac{1}{p}\right)=\left(1+\frac{1}{p}\right)^{p}\left(1+\log\left(1+\frac{1}{p}\right)\right)^{\alpha}\leq e2^{\alpha}.
\]
Consequently,
\[
\|f-f_{Q,\mu}\|_{ {\phi}_{p,\alpha}(L)\left(Q,\frac{\d\mu}{\mu(Q)}\right)}\leq c_{\mu}2^{n_{\mu}}e2^{\alpha}\left(p+\alpha+1\right)\|f\|_{\mathrm{BMO}(\d\mu)}.
\]
This proves Corollary \ref{cor:LlogL} in the Introduction.

\end{example}
\begin{remark}
We observe that different choices for defining the same Orlicz norm may give different quantitative controls when applying our self-improving result. Indeed, one may check that, for instance, the alternative choice $\tilde{\phi}_{p,\alpha}(t):=[\log(e+1)]^{-\alpha}t^p[\log(e+t)]^\alpha$, $\alpha\geq0$, $p>1$ for defining the norm $\|\cdot\|_{L^p\log^\alpha L}$ leads to the following estimate 
  \[
  \begin{split}
 \|f-f_{Q,\mu}&\|_{L^p(\log L)^\alpha\left(Q,\frac{\d\mu}{\mu(Q)}\right)}\\
 &\leq c_\mu 2^{n_\mu}  e \left[\log\left(e+1\right)\right]^{\alpha(p-1)}[\log(e+2\log(1+e)^\alpha)]^\alpha   (p+\alpha+1) \|f\|_{\mathrm{BMO}(\d\mu)}.
\end{split}
 \] 
 This difference comes mainly from the fact that the Young function $\tilde{\phi}_{p,\alpha}$ is not submultiplicative but quasi-submultiplicative. Observe that the Young function we chose in the example above gives a cleaner constant. This difference makes us wonder about the sharpness of the estimates we get with our method. Nevertheless, observe that, in any case (that is, by choosing $\phi_{p,\alpha}$ or $\tilde{\phi}_{p,\alpha}$) we recover the sharp estimate in Theorem \ref{thm:PerezRela} by choosing $\alpha=0$.
\end{remark}

{\begin{remark} For a Young function $\phi$ we can define the weak Orlicz quasi-norm
\[
\|f\|_{\mathcal{M}_{\phi}(\d\mu)}:=\inf\left\{ \lambda>0\,:\,\sup_{t>0}\phi(t)\mu\left(\left\{ x\in\mathbb{R}^{n}\,:\,|f(x)|>\lambda t\right\} \right)\leq1\right\}. 
\]
It is easy to prove that 
\[
\|cf\|_{\mathcal{M}_{\phi}(\d\mu)}=|c|\|f\|_{\mathcal{M}_{\phi}(\d\mu)}
\]
for any $c\in\mathbb{R}$ 
and also that 
\[
\|f+g\|_{\mathcal{M}_{\phi}(\d\mu)}\leq2\left(\|f\|_{\mathcal{M}_{\phi}(\d\mu)}+\|g\|_{\mathcal{M}_{\phi}(\d\mu)}\right).
\]
This kind of spaces were studied in \cite{Iaffei}. This quasi-norm can be localized by defining 
\[
\left\Vert f\right\Vert _{\mathcal{M}_{\phi}\left(Q,\frac{\d\mu}{\mu(Q)}\right)}:=\inf\left\{ \lambda>0\,:\,\sup_{t>0}\frac{\phi(t)}{\mu(Q)}\mu\left(\left\{ x\in Q\,:\,|f(x)|>\lambda t\right\} \right)\leq1\right\} 
\]
for every cube $Q$ in $\mathbb{R}^n$. 

Such a quasi-norm can be proved to be good with respect to the functional $Y(Q):=\mu(Q)$ and it is possible to show that, counterparts of Lemmas \ref{lem:reduction}  and \ref{lem:OrliczGoodFunct} hold as well for weak-Orlicz spaces. Hence our approach allows   to provide results for this family of spaces as well.
\end{remark}
}

 \subsection{\texorpdfstring{$\mathrm{BMO}$}{BMO-variable}-type improvement at the variable Lebesgue spaces scale} \label{sec:variable-Lp}
 
We finish this section with another example of application now to the setting of variable Lebesgue spaces. Note that in this case the method of the Laplace transform for the obtention of the sharp estimate does not apply. Let $p:\mathbb{R}^n\to[1,\infty]$ be a Lebesgue measurable function and denote $p^-:=\essinf_{x\in\mathbb{R}^n}p(x)$ and $p^+:=\esssup_{x\in\mathbb{R}^n}p(x)$. Assume $p^+<\infty$. The Lebesgue space with variable exponent $p(\cdot)$ is the space of Lebesgue measurable functions $f$ satisfying that
\[
\|f\|_{L^{p(\cdot)}}:=\inf\left\{\lambda>0:\int_{\mathbb{R}^n} \left(\frac{|f(x)|}{\lambda}\right)^{p(x)}\,\d x\leq 1 \right\}<\infty.
\]
One can associate to this space the local averages 
\[
\|f\|_{L^{p(\cdot)}\left(Q,\frac{\d x}{|Q|}\right)} :=\inf\left\{\lambda>0:\frac{1}{|Q|}\int_{Q} \left(\frac{|f(x)|}{\lambda}\right)^{p(x)}\,\d x\leq 1 \right\}.
\]
 Note that, by choosing $\lambda_0=1$, one has 
\[
 \frac{1}{|Q|}\int_{Q} \left(\frac{|\chi_Q(x)|}{\lambda_0}\right)^{p(x)}\,\d x=\frac{1}{|Q|}\int_{Q} \left( |\chi_Q(x)| \right)^{p(x)}\,\d x=1,
\]
and therefore $\|\chi_Q\|_{L^{p(\cdot)}\left(Q,\frac{\d x}{|Q|}\right)}\leq 1$. 
This in particular means that the variable Lebesgue space $L^{p(\cdot)}(\mathbb{R}^n,\d x)$ satisfies properties \eqref{property:consistency} and \eqref{property:general_average} in Definition \ref{def:compatible}. Property \eqref{property:general_Fatou}  follows from \cite[Theorem 2.59]{CruzUribe2013}.
\begin{example}
We can show here that the functional defined by the Lebesgue measure satisfies the $A_\infty(\d\mu, L^{ p(\cdot)}(\d x))$ condition in Definition \ref{def:generalized_Ainfty2} for any   essentially bounded exponent function $p$. Indeed, let $Q$ be a cube in $\mathbb{R}^n$ and consider a family $\{Q_j\}_{j\in\mathbb{N}}\in \Delta(Q)$ and a sequence $\{h_j\}_{j\in\mathbb{N}}$ of functions satisfying $\|h_j\|_{L^{p(\cdot)}\left(Q_j,\frac{\d x}{|Q_j|}\right)}=1$ for every $j\in\mathbb{N}$. Then
\[
\begin{split}
\frac{1}{|Q|}\int_Q \left(\frac{\sum_{j\in\mathbb{N}}  h_j(x)\chi_{Q_j}(x)}{\lambda}\right)^{p(x)}\,\d x&= \sum_{j\in\mathbb{N}} \frac{|Q_j|}{|Q|}\frac{1}{|Q_j|}\int_{Q_j} \left(\frac{h_j(x)}{\lambda}\right)^{ p(x)}\,\d x
\end{split}
\]
and so, by taking  $\lambda= \left(\frac{\sum_{j\in\mathbb{N}}|Q_j|}{|Q|}\right)^{1/ p^+}$, one finds that 
\[
\begin{split}
\frac{1}{|Q|}\int_Q \left(\frac{\sum_{j\in\mathbb{N}}  h_j(x)\chi_{Q_j}(x)}{\lambda}\right)^{ p(x)}\,\d x&\leq  \sum_{j\in\mathbb{N}} \frac{|Q_j|}{|Q|}\frac{1}{|Q_j|}\int_{Q_j} \left(\frac{h_j(x)}{\left(\frac{\sum_{j\in\mathbb{N}}|Q_j|}{|Q|}\right)^{1/ p^+}}\right)^{ p(x)}\,\d x\\
& \leq \sum_{j\in\mathbb{N}}    \frac{|Q_j|}{|Q|}    \frac{1}{ \frac{\sum_{j\in\mathbb{N}}|Q_j|}{|Q|} }   \frac{1}{|Q_j|}\int_{Q_j}   h_j(x)^{p(x)}  \,\d x\\
&\leq 1,
\end{split}
\]
where \cite[Proposition 2.21]{CruzUribe2013} has been used. This proves that, for any $r\geq1$, 

\begin{equation} 
\left\|\sum_{j\in\mathbb{N}}  h_j\chi_{Q_j}\right\|_{L^{rp(\cdot)}\left(Q,\frac{\d x}{|Q|}\right)} \leq \left(\frac{\sum_{j\in\mathbb{N}}|Q_j|}{|Q|}\right)^{1/rp^+}.
\end{equation}
\end{example}

An application of this along with Theorem \ref{thm:self-improving_general} proves Corollary \ref{cor:variableLebesgue} in the Introduction. 
Now we will set an application of this corollary to a John-Nirenberg type inequality. Note first that, for given $1/p^-\leq s<\infty$, one has that $\||f|^s\|_{L^{p(\cdot)}\left(Q,\frac{\d x}{|Q|}\right)}=\|f\|_{L^{sp(\cdot)}\left(Q,\frac{\d x}{|Q|}\right)}^s$, see \cite[Proposition 2.18]{CruzUribe2013}. Let $t>0$ and take $r\geq 1$.  Define, for any cube $Q$ in $\mathbb{R}^n$, the subset $E\subset Q$ defined as $E_t:=\{x\in Q: |f(x)-f_Q|\geq t \}$. Then,

\begin{eqnarray*}
\frac{1}{|Q|}\int_Q \left(\frac{\chi_{E_t}(x) }{\frac{1}{t^{r }}\|f-f_Q\|^{r }_{L^{ rp(\cdot)}\left(Q,\frac{\d x}{|Q|}\right)}}\right)^{p(x)}\,\d x & = &
\frac{1}{|Q|}\int_{E_t} \left(\frac{t^r }{\||f-f_Q|^r\|_{L^{p(\cdot)}\left(Q,\frac{\d x}{|Q|}\right)}  }\right)^{p(x)}\,\d x\\
& \leq  &\frac{1}{|Q|} \int_{Q} \left(\frac{|f(x)-f_Q|^r }{\||f-f_Q|^r\|_{L^{ p(\cdot)}\left(Q,\frac{\d x}{|Q|}\right)}  }\right)^{p(x)}\,\d x.
\end{eqnarray*}

Hence, for any $t>0$, one has the following Chebychev type inequality
\[
\|\chi_{\{x\in Q: |f(x)-f_Q|\geq t\}}\|_{L^{p(\cdot)}\left(Q,\frac{\d x}{|Q|}\right)}\leq \frac{1}{t^r}\|f-f_Q\|_{L^{ rp(\cdot)}\left(Q,\frac{\d x}{|Q|}\right)}^r. 
\]
Then, by Corollary \ref{cor:variableLebesgue} applied to the exponent function $rp(\cdot)$,
\[
\|\chi_{\{x\in Q: |f(x)-f_Q|\geq t\}}\|_{L^{p(\cdot)}\left(Q,\frac{\d x}{|Q|}\right)}\leq  \frac{1}{t^r}\left[C(n) rp^+\|f\|_{\mathrm{BMO}} \right]^r, 
\]
for every $r\geq1$.
For $t\geq 2 C(n)p^+\|f\|_{\mathrm{BMO}}$, take $r=t/(2 C(n)p^+\|f\|_{\mathrm{BMO}})$ to  find that
\[
\|\chi_{\{x\in Q: |f(x)-f_Q|\geq t\}}\|_{L^{p(\cdot)}\left(Q,\frac{\d x}{|Q|}\right)}\leq   1/2^r=e^{-C(n,p^+)t/\|f\|_{\mathrm{BMO}}},
\]
where $C(n,p^+)=(2C(n)p^+)^{-1}\log 2$. When $t\leq 2C(n)p^+\|f\|_{\mathrm{BMO}}$, we have that the inequality  $e^{-C(n,p^+)t/\|f\|_{\mathrm{BMO}}}\geq 1/2$ holds and, therefore, by the Average property of the norm,
\[
\|\chi_{\{x\in Q: |f(x)-f_Q|\geq t\}}\|_{L^{p(\cdot)}\left(Q,\frac{\d x}{|Q|}\right)}\leq \|\chi_{Q}\|_{L^{p(\cdot)}\left(Q,\frac{\d x}{|Q|}\right)}\leq 1\leq  2e^{-C(n,p^+)t/\|f\|_{\mathrm{BMO}}}
\]
for these values of $t$. Hence, for every $t>0$ we got that 
\[
\|\chi_{\{x\in Q: |f(x)-f_Q|\geq t\}}\|_{L^{p(\cdot)}\left(Q,\frac{\d x}{|Q|}\right)}\leq  2 e^{-C(n,p^+)t/\|f\|_{\mathrm{BMO}}}.
\]
This proves Corollary \ref{cor:JohnNirenbergvariable}. The John-Nirenberg type inequality we just got has something to do  with the John-Nirenberg type inequality in \cite[Theorem 3.2]{Ho2014}. Note that, although condition $p^+<\infty$ is used, no further condition is assumed on the exponent function $p$. This inequality also proves and generalizes the inequality 
\[
\frac{1}{|Q|}\int_Q e^{\lambda|f(x)-f_Q|}\,\d x\leq C
\]
for $\lambda< C(n,p^+)$, as it does the one in \cite[Theorem 3.2]{Ho2014}.

\appendix
\section{Proof of Theorem \ref{pr:characterization_Ainfty}}\label{proof:characterization_Ainfty}
To provide the proof of Theorem \ref{pr:characterization_Ainfty} we need the following Lemma.
 \begin{lemma}\label{lem:Hyt}
 Let $b\in L_{\text{loc}}^{1}(\mathbb{R}^{n},\d\mu)$ and let $Q_{0}\subset\mathbb{R}^{n}$
be a cube. Then there exists a collection $\mathcal{S}(Q_{0})\subset\mathcal{D}(Q_{0})$ of dyadic cubes such that:
\begin{enumerate}
\item There exists a family $\{E_Q\}_{Q\in \mathcal{S}(Q_{0})}$ of pairwise disjoint sets with $E_Q\subset Q$ and $\mu(Q)\leq 2 \mu (E_Q)$ for each $Q\in \mathcal{S}(Q_{0})$.
\item For almost every $x\in Q_{0}$, 
\[
|f(x)-f_{Q_{0}}|\chi_{Q_{0}}(x)\lesssim\sum_{Q\in\mathcal{S}(Q_{0})}\frac{1}{\mu(Q)}\int_{Q}|f-f_{Q}|\d\mu\chi_{Q}(x).
\]
\end{enumerate}
\end{lemma}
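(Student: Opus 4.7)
The plan is to construct $\mathcal{S}(Q_0)$ by iterating a local Calder\'on--Zygmund stopping-time argument, in the spirit of the one used in the proof of Theorem \ref{thm:self-improving_general}. I would fix $\tau := 2 c_\mu 2^{n_\mu}$, place $Q_0$ in $\mathcal{S}(Q_0)$, and then build the family recursively: for every $Q$ already selected, perform the local Calder\'on--Zygmund decomposition of $|f-f_{Q,\mu}|$ on $Q$ at level $\lambda_Q := \tau\cdot\frac{1}{\mu(Q)}\int_Q|f-f_{Q,\mu}|\,\d\mu$, and add to $\mathcal{S}(Q_0)$ the maximal dyadic subcubes $Q' \in \mathcal{D}(Q)$ on which the average of $|f-f_{Q,\mu}|$ exceeds $\lambda_Q$. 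I would call these the children $\mathrm{ch}(Q)$ of $Q$, set $E_Q := Q\setminus\bigcup_{Q'\in\mathrm{ch}(Q)}Q'$, and observe that the $E_Q$ are pairwise disjoint across $Q\in\mathcal{S}(Q_0)$ by the nesting of dyadic cubes. A direct Chebyshev on the stopping condition yields
\[
\sum_{Q'\in\mathrm{ch}(Q)}\mu(Q') \;<\; \frac{1}{\lambda_Q}\int_Q |f-f_{Q,\mu}|\,\d\mu \;=\; \frac{\mu(Q)}{\tau} \;\leq\; \frac{\mu(Q)}{2},
\]
which gives $\mu(Q) \leq 2\mu(E_Q)$ and settles item (1).

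For item (2), my strategy is a standard telescoping through the tower of ancestors. For a generic $x \in Q_0$, I would let $Q_0 = Q^{(0)} \supsetneq Q^{(1)} \supsetneq \cdots \supsetneq Q^{(N)}$ denote the chain of ancestors of $x$ in $\mathcal{S}(Q_0)$, with $N=N(x)$ the first generation at which $x \in E_{Q^{(N)}}$, and telescope
\[
|f(x)-f_{Q_0,\mu}| \;\leq\; |f(x)-f_{Q^{(N)},\mu}| + \sum_{k=0}^{N-1}|f_{Q^{(k+1)},\mu} - f_{Q^{(k)},\mu}|.
\]
I would bound the terminal term by $\lambda_{Q^{(N)}}$ for $\mu$-a.e.\ such $x$ via the Lebesgue differentiation theorem (the stopping procedure ensures this bound holds outside the children), and bound each telescoping increment by $c_\mu 2^{n_\mu}\lambda_{Q^{(k)}}$ using the maximality in the selection of $Q^{(k+1)}$ together with the doubling property \eqref{eq:doubling_condition} applied to the dyadic parent of $Q^{(k+1)}$. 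Unfolding the definition of $\lambda_Q$ and summing then gives
\[
|f(x)-f_{Q_0,\mu}| \;\lesssim\; \sum_{Q \in \mathcal{S}(Q_0),\, x \in Q}\frac{1}{\mu(Q)}\int_Q|f-f_{Q,\mu}|\,\d\mu,
\]
as required, with implicit constant depending only on $c_\mu$ and $n_\mu$.

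The step I expect to need the most care is justifying that the chain of ancestors of $x$ is finite for $\mu$-a.e.\ $x \in Q_0$, so that the telescoping above is meaningful. I would handle this by iterating the Chebyshev bound from the first paragraph to obtain the geometric decay $\mu(U_N) \leq \tau^{-N}\mu(Q_0)$, where $U_N$ is the union of the generation-$N$ cubes in $\mathcal{S}(Q_0)$; since $\tau>1$, the exceptional set $\bigcap_N U_N$ on which no termination occurs has $\mu$-measure zero, and the pointwise bound therefore holds $\mu$-a.e.\ on $Q_0$. Beyond this, the remaining work is just bookkeeping of the doubling constants, which are all absorbed into the $\lesssim$ notation of the conclusion.
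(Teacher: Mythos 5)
Your proof is correct and takes exactly the approach the paper intends: the paper omits the argument and cites Hyt\"onen, and your stopping-time construction (iterated Calder\'on--Zygmund selection with threshold $\tau = 2c_\mu 2^{n_\mu}$, disjoint remainders $E_Q$, telescoping along the chain of ancestors, and geometric decay of the generations) is the standard sparse decomposition of a $\mathrm{BMO}$ function adapted to doubling measures. All the steps you flag as needing care (the terminal bound via Lebesgue differentiation, the increment bound via maximality plus the one-step doubling inequality as in \eqref{CZ1M2}, and a.e.\ termination via $\mu(U_N)\leq\tau^{-N}\mu(Q_0)$) are handled correctly, and the pairwise disjointness of the $E_Q$ follows as you say from dyadic nesting.
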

We omit the proof since the result follows straightforward adapting arguments in \cite{Hyt}.  

Before proceeding to the proof of Theorem \ref{pr:characterization_Ainfty} we need to settle another lemma. It turns out that the proof of the theorem in the Euclidean case with Lebesgue measure uses the fact that, given any cube $Q$, there is always a subcube of it with half its measure. This is not guaranteed in principle for a doubling measure (at least it is not for the best of our knowledge). Therefore we will provide a geometric lemma which is enough for our purposes. 
\begin{lemma}\label{Cubes:claim}
Let $\mu$ be a doubling measure on $\mathbb{R}^n$ with doubling dimension $n_\mu$ and doubling constant $c_\mu$. If $\mu$ is not identically zero, then there is, for any cube $Q$ in $\mathbb{R}^n$, a subcube $\tilde{Q}$ with $\mu(\tilde{Q})=\alpha \mu(Q)$, where $\frac{1}{4c_\mu}\leq \min\{\alpha, 1-\alpha\}$.
\end{lemma}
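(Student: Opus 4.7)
The strategy is to slide a closed, axis-aligned cube anchored at a vertex of $Q$, tracking how its $\mu$-measure grows with the side length, and to use the doubling condition \eqref{eq:doubling_condition} to bound the size of any possible jump. After discarding the trivial case $\mu(Q)=0$, we may translate and reflect so that $Q=[0,\ell]^n$ and the origin satisfies $\mu(\{0\})\leq\tfrac12\mu(Q)$; such a vertex of $Q$ exists, since otherwise each of the $2^n\geq 2$ corner atoms would exceed $\tfrac12\mu(Q)$, forcing their sum to strictly exceed $\mu(Q)$. Put $C_r:=[0,r]^n\subset Q$ and $F(r):=\mu(C_r)$. Then $F$ is nondecreasing and right-continuous (the cubes $C_{r+\varepsilon}$ decrease to $C_r$ as $\varepsilon\downarrow 0$ and $\mu$ is Radon), with $F(0)\leq\tfrac12\mu(Q)$ and $F(\ell)=\mu(Q)$, so
\[
r^*\,:=\,\sup\bigl\{r\in[0,\ell]:F(r)\leq \tfrac12\mu(Q)\bigr\}\;\in\;(0,\ell),
\]
and by right-continuity and the definition of the supremum, $F(r^*-)\leq\tfrac12\mu(Q)\leq F(r^*)$.

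The doubling hypothesis is used only once, to bound the size of any discontinuity of $F$ at $r^*$. For each $r'\in(0,r^*)$ the inclusion $C_{r'}\subset C_{r^*}$ combined with \eqref{eq:doubling_condition} gives $F(r^*)\leq c_\mu(r^*/r')^{n_\mu}F(r')$, and letting $r'\uparrow r^*$ we obtain the key estimate
\[
F(r^*)\;\leq\;c_\mu\,F(r^*-).
\]
This single inequality, capping any jump of $F$ at $r^*$ by the factor $c_\mu$, is precisely what produces the constant $1/(4c_\mu)$ in the statement.

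Set $\alpha_0:=\tfrac{1}{4c_\mu}\in(0,\tfrac14]$. If $F(r^*)\leq(1-\alpha_0)\mu(Q)$ we take $\tilde Q:=C_{r^*}$, so that $\alpha:=F(r^*)/\mu(Q)\in[\tfrac12,1-\alpha_0]$ gives $\min\{\alpha,1-\alpha\}=1-\alpha\geq\alpha_0$. Otherwise $F(r^*)>(1-\alpha_0)\mu(Q)$, and the key estimate implies
\[
F(r^*-)\,\geq\,F(r^*)/c_\mu\,>\,(1-\alpha_0)\mu(Q)/c_\mu\,\geq\,\alpha_0\,\mu(Q),
\]
the last inequality being equivalent to $c_\mu\geq\tfrac13$ and therefore automatic. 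Since $F(r^*-)=\sup_{r<r^*}F(r)$, we may choose $r_0<r^*$ with $F(r_0)>\alpha_0\mu(Q)$, and then $\tilde Q:=C_{r_0}$ satisfies $\alpha=F(r_0)/\mu(Q)\in(\alpha_0,\tfrac12]$, again giving $\min\{\alpha,1-\alpha\}\geq\alpha_0$. The only delicate point of the argument is the possibility that $F$ has a discontinuity that skips the whole target interval $[\alpha_0,1-\alpha_0]\mu(Q)$; this is exactly what the doubling-based estimate $F(r^*)\leq c_\mu F(r^*-)$ rules out.
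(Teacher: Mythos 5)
Your argument is correct in its essentials and follows the same basic strategy as the paper's proof (track the $\mu$-measure of a one-parameter family of growing subcubes and use doubling to cap any jump of the resulting monotone function), but your implementation is genuinely cleaner in two respects. First, you anchor the sliding cube at a vertex and use closed cubes, so $F$ is right-continuous by continuity from above, whereas the paper centers at $x_Q$ with open cubes and gets left-continuity. Second, and more substantially, your jump bound is \emph{multiplicative}, $F(r^*)\leq c_\mu F(r^*-)$, obtained directly from \eqref{eq:doubling_condition} by letting $r'\uparrow r^*$; the paper instead derives the additive bound $\mu[\partial Q(x_Q,t)]\leq\frac{c_\mu-1}{c_\mu}\mu(Q)$ and then has to hunt through the connected components of the ``gap set'' $[0,\mu(Q)]\setminus h([0,\ell(Q)])$ before locating a usable $\alpha$. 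Your two-case split at $(1-\alpha_0)\mu(Q)$ replaces that case analysis with a shorter computation, and the multiplicative control is exactly what makes the constant $\alpha_0=\tfrac1{4c_\mu}$ drop out directly.

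There is one small but real gap: you need $r^*>0$ (otherwise $C_{r^*}=\{0\}$ is degenerate and the limit $r'\uparrow r^*$ is vacuous), and your pigeonhole argument at the vertices only yields $F(0)=\mu(\{0\})\leq\tfrac12\mu(Q)$, not the strict inequality. If $F(0)=\tfrac12\mu(Q)$ then by right-continuity $F(r)>\tfrac12\mu(Q)$ for all $r>0$ would be consistent and $r^*=0$. The clean fix is to observe that a doubling measure on $\mathbb{R}^n$ has no atoms (for example: by doubling, a corner cube $C'_r\subset C_{2r}$ disjoint from $0$ satisfies $\mu(C'_r)\geq\mu(C_{2r})/(c_\mu 2^{n_\mu})\geq\mu(\{0\})/(c_\mu 2^{n_\mu})$, and letting $r\downarrow 0$ gives $\mu(\{0\})\geq\mu(\{0\})(1+1/(c_\mu 2^{n_\mu}))$, forcing $\mu(\{0\})=0$). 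Once $\mu(\{0\})=0$ is noted, the vertex-selection step becomes unnecessary — any vertex works — and $r^*\in(0,\ell)$ is immediate. Similarly, instead of ``discarding'' the case $\mu(Q)=0$, you should observe (as the paper does) that a nontrivial doubling measure gives every cube positive measure, so this case never occurs.
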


\begin{proof}
Note that a nontrivial doubling measure must satisfy that $\mu(Q)>0$ for every cube $Q$ in $\mathbb{R}^n$. Indeed, assume that there exists a cube $Q$  with $\mu(Q)=0$. Since we can write $\mathbb{R}^n=\bigcup_{k\in\mathbb{N}}kQ$, we would have that
\[
\mu(\mathbb{R}^n)=\mu\left(\bigcup_{k\in\mathbb{N}}kQ\right) =\lim_{k\to\infty}\mu(kQ)\leq\lim_{k\to\infty} c_\mu k^{n_\mu}\mu(Q)=\lim_{k\to\infty}c_\mu k^{n_\mu}\cdot 0=0,
\]
which contradicts the nontriviality of $\mu$.

For any given $x\in\mathbb{R}^n$ and $t\geq0$ let us denote by $Q(x,t)$ the open cube with center at $x$ and sidelength $t$. Fix a cube $Q$ in $\mathbb{R}^n$ and let $x_Q$ be its center. Taking into account the preceding observation, we have that  $\mu(Q(x_Q,t))>0$ for every $0<t\leq\ell(Q)$. Moreover, since one can fit a cube $P$ inside any annulus $Q(x_Q,\ell(Q))\backslash Q(x_Q,t)$ with $0<t<\ell(Q)$, we do know that also $\mu \left[Q(x_Q,\ell(Q))\backslash Q(x_Q,t)\right]>0$.  This implies that the function $h:[0,\ell(Q)]\to[0,\infty)$ defined by $h(t)=\mu[Q(x_Q,t)]$  is strictly increasing. Note that, as  $Q(x_Q,t)=\bigcup_{0<s<t}Q(x_Q,s)$, we always have that 
\begin{equation}\label{inf}
\lim_{\varepsilon\to 0} h(t)-h(t-\varepsilon) =0,
\end{equation}
and, therefore, the only possibility for a discontinuity of $h$ at a point $t$ is to have 
\begin{equation}\label{sup}
\lim_{\varepsilon\to 0} h(t+\varepsilon)-h(t)>0,
\end{equation}
 that is, to have 
 \[
0<\lim_{\varepsilon\to 0} \mu[Q(x_Q,t+\varepsilon)]-\mu[Q(x_Q,t)]=\mu[\overline{Q(x_Q,t)}]-\mu[Q(x_Q,t)]=\mu[\partial Q(x_Q,t)],
\] 
where it has been used that the closure $\overline{Q(x_Q,t)}$ of the cube $Q(x_Q,t)$ can be written as the intersection $\bigcap_{t<s\leq \ell(Q) }Q(x_Q,s)$.

In case such a discontinuity happens, note that, by the doubling condition, 
\[
\begin{split}
\mu[\overline{Q(x_Q,t)}]&\leq c_\mu \mu[Q(x_Q,t)]=c_\mu\left[\mu[\overline{Q(x_Q,t)}]-\mu[\partial Q(x_Q,t)]\right],
\end{split}
\]
and so we have  $\mu[\partial Q(x_Q,t)] \leq \frac{c_\mu-1}{c_\mu}\mu[\overline{Q(x_Q,t)}]$. We can uniformly bound this obtaining that 
\begin{equation}\label{uniformbound}
\mu[\partial Q(x_Q,t)]\leq \frac{c_\mu-1}{c_\mu}\mu(Q),\qquad 0<t<\ell(Q).
\end{equation}
Therefore, $h$ must be continuous except for jumps of length at most $ \frac{c_\mu-1}{c_\mu}\mu(Q)$. These jumps are gaps of  $h\left([0,\ell(Q)]\right)$ in $  [0,\mu(Q)]$.  Let  $G$ be this set of gaps of $h\left([0,\ell(Q)]\right)$ in $[0,\mu(Q)]$, namely,
\[
G:=[0,\mu(Q)]\backslash h\left([0,\ell(Q)]\right).
\]
Since $h$ is strictly increasing, we  know that $G$ is at most the countable union of its connected components and moreover, we know that there are points of $h\left([0,\ell(Q)]\right)$ in $[0,\mu(Q)]$ between any two connected components of $G$. The goal is to see that there is always a connected component $I$ of $G$ for which one can find points in $h\left([0,\ell(Q)]\right)$  which are close to $I$ and far from the boundary of $[0,\mu(Q)]$, that is, we look for some $\alpha\in(0,1)$ with $\alpha\mu(Q)$ close to $I$ and $\min\{\alpha,1-\alpha\}$ uniformly bounded from below.

We investigate the following  two possibilities:
\begin{enumerate}
\item There is $t\in (0,\ell(Q))$ with $h(t)=\mu(Q(x_Q,t))=\frac{1}{2}\mu(Q )$. In this case, we can choose $\alpha=1/2$ and we are done.
\item There is \emph{not} any  $t\in (0,\ell(Q))$ with $h(t)=\mu(Q(x_Q,t))=\frac{1}{2}\mu(Q)$, that is, $\frac{1}{2}\mu(Q)\in G$. Let us call $I$ the connected component of $G$ containing $\frac{1}{2}\mu(Q)$. Note that $I$ must be a nondegenerated interval  containing $\frac{1}{2}\mu(Q)$ since the discontinuities of $h$ are jump discontinuities. Moreover, by \eqref{inf} and \eqref{sup} we have that $I$ must be of the form $(\inf I,\sup I]$.  Furthermore,  in virtue of the bound in \eqref{uniformbound}, its length can be at most $\frac{c_\mu-1}{c_\mu}\mu(Q)$. Around this interval $I$ we can find points of $h\left([0,\ell(Q)]\right)$. We will choose one of these depending on the closeness of $I$ to the borders of $[0,\mu(Q)]$. Assume for instance that $\sup I$ is closer to $\mu(Q)$ than $\inf I$ is to $0$. In this case, 
\[
\begin{split}
\inf I&=\inf I-0\geq \mu(Q)-\sup I\geq \mu(Q)-(\inf I+|I|)\\
&\geq \mu(Q)-\inf I-\frac{c_\mu-1}{c\mu}\mu(Q)=\frac{1}{c_\mu}\mu(Q)-\inf I,
\end{split}
\]
which implies that $\inf I\geq \frac{1}{2c_\mu}\mu(Q)$. Then we can choose any $\alpha$ with $\alpha \mu(Q)\in \left(\inf I-\frac{1}{4 c_\mu} {\mu(Q)},\inf I  \right]\cap h\left([0,\ell(Q)]\right)\neq \emptyset$.

 Since $\inf I\geq \frac{1}{2c_\mu}\mu(Q)$, we know that 
\[
\inf I-\frac{1}{4c_\mu} {\mu(Q)}\geq  \frac{1}{2c_\mu} {\mu(Q)}-\frac{1}{4c_\mu} {\mu(Q)}=\frac{1}{4c_\mu} {\mu(Q)},
\]
so $\alpha\geq \frac{1}{4c_\mu}$. Since $\alpha \mu(Q)<\inf I\leq \frac{1}{2}\mu(Q)$, we know that $\mu(Q)-\alpha\mu(Q) \geq \frac{1}{2}\mu(Q)\geq \frac{1}{4c_\mu}\mu(Q)$, where we used that $c_\mu\geq 1$. This proves that also $1-\alpha\geq \frac{1}{4c_\mu}$.

{Note that we could actually have chosen $\alpha$ such that $\alpha\mu(Q)=\inf I$, since $\inf I\notin I$, but we chose to write the argument in this form in order to have a valid argument also for the case   in which $\inf I$ is closer to $0$ than $\sup I$ is to $\mu(Q)$. Although in this other case we do not have $\sup I\notin I$, we can still apply the argument  above  to find an appropriate $\alpha$ and then we are done. }
\end{enumerate}
\end{proof}

With these tools at hand, we are prepared for the proof of Theorem \ref{pr:characterization_Ainfty}.

\begin{proof}[Proof of Theorem \ref{pr:characterization_Ainfty}]
First we observe that by Lemma \ref{lem:Hyt}
\begin{align*}
\frac{1}{Y(Q_{0})}\int_{Q_{0}}|f(x)-f_{Q_{0}}|v(x)\d\mu(x)  &\lesssim\frac{1}{Y(Q_{0})}\sum_{Q\in\mathcal{S}(Q_{0})}\frac{1}{\mu(Q)}\int_{Q}|f-f_{Q}|\d\mu v(Q)\\
 & \leq\|b\|_{BMO(\d\mu)}\frac{1}{Y(Q_{0})}\sum_{Q\in\mathcal{S}(Q_{0})}v(Q)\\
 &\leq\|b\|_{BMO(\d\mu)}\frac{1}{Y(Q_{0})}\sum_{Q\in\mathcal{S}(Q_{0})}\frac{\mu(Q)}{\mu(Q)}v(Q)\\
 & \leq2\|b\|_{BMO(\d\mu)}\frac{1}{Y(Q_{0})}\sum_{Q\in\mathcal{S}(Q_{0})}\frac{\mu(E_{Q})}{\mu(Q)}v(Q) \\
 &\leq2\|b\|_{BMO(\d\mu)}\frac{1}{Y(Q_{0})}\sum_{Q\in\mathcal{S}(Q_{0})}\int_{E_{Q}}M(\chi_{Q_{0}}v)\d\mu\\
 & \leq2\|b\|_{BMO(\d\mu)}\frac{1}{Y(Q_{0})}\int_{Q_{0}}M(\chi_{Q_{0}}v)\d\mu\\
 & \leq2\|b\|_{BMO(\d\mu)}[v]_{A_{\infty,Y}}
\end{align*}
This yields $B\lesssim [v]_{A_{\infty,Y}}$.

 For the other implication, recall that, for any weight $w\in L^1_{\loc}(\mathbb{R}^n,\d\mu)$ it holds that
\begin{equation}\label{eq:maximal_log}
\frac{1}{\mu(Q)}\int_Q M_\mu(w\chi_Q)(x)\,\d\mu(x) \lesssim \frac{1}{\mu(Q)}\int_Q\left[1+\log^+\left(\frac{w(x)\chi_Q(x)}{w_{Q,\mu}}\right)\right]w(x)\,\d\mu(x), 
\end{equation}
where $\log^+ t:=\max\{\log t,0\}$. Assume that there is some constant $C>0$ such that
 \begin{equation} \label{eq:BMOAinfty}
 \frac{1}{w(Q)}\int_Q|f(x)-f_{Q,\mu}| \,\d w(x) \leq B  \| f\|_{\mathrm{BMO}(\d\mu)}
 \end{equation}
 for every function $f\in \mathrm{BMO}(\d\mu)$. The first observation is the fact that $w(Q)\leq 8c_\mu B\cdot Y(Q)$ for every cube $Q$ in $\mathbb{R}^n$.  Indeed, by Lemma \ref{Cubes:claim}, given a cube $Q$ in $\mathbb{R}^n$,  there is a cube $\tilde{Q}\subset Q$  of measure $\mu(\tilde{Q})=\alpha\mu(Q)$ with $\min\{\alpha,1-\alpha\}\geq \frac{1}{4c_\mu}$. For these cubes we have
 
 \[
| \chi_Q(x)-(\chi_{\tilde{Q}})_{Q,\mu}|\geq \min\{\alpha,1-\alpha\}\geq \frac{1}{4c_\mu}.
 \] 
Since $\chi_{\tilde{Q}}\in L^\infty(\mathbb{R}^n,\d\mu)\subset \mathrm{BMO}(\d\mu)$, one can write
\[
\begin{split}
 \min\{\alpha,1-\alpha\}\frac{w(Q)}{Y(Q)}&\leq \frac{1}{Y(Q)}\int_Q|\chi_{\tilde{Q}}(x)-(\chi_{\tilde{Q}})_{Q,\mu}|\,\d w(x)\\
 &\leq \|\chi_{\tilde{Q}}\|_{\mathrm{BMO}_{w\d\mu,Y}}\leq B \|\chi_{\tilde{Q}}\|_{\mathrm{BMO}(\d\mu)}\leq 2B,
\end{split} 
\]
which finishes the proof of the claimed inequality $w(Q)<8c_\mu BY(Q)$.

Define the weight $v(x):= M_\mu\left(\frac{w\chi_Q}{w_{Q,\mu}}\right)(x)^{1/2}$, which  satisfy that $f(x):=\log v(x)$ is a $\mathrm{BMO}(\d\mu)$ function with $\|f\|_{\mathrm{BMO}(\d\mu)}\leq  4[v]_{A_1(\d\mu)}^2\leq c(\mu)$, where $c(\mu)$ is a constant which just depend on the doubling dimension of $\mu$, but not on the cube $Q$, although $v$ does depend on it.  This means that, for this function $f$,
\[
\frac{1}{Y(Q)}\int_Q|f(x)-f_{Q,\mu}|\,\d w(x) \leq B\|f\|_{\mathrm{BMO}(\d\mu)}\leq  B \cdot c(\mu),
\]
that is,
\begin{equation}\label{eq:stepBMOAinfty}
\frac{1}{\mu(Q)}\int_Q|f(x)-f_{Q,\mu}|\,\d w(x)  \leq  B\cdot c(\mu)\frac{Y(Q)}{\mu(Q)}.
\end{equation}
There exists some $\beta_\mu>1$ such that, for any $x\in L_Q:=\{x\in Q:w(x)\geq \beta_\mu^2 w_{Q,\mu}\}$,
\begin{equation}\label{eq:claimBMOAinfty}
|f(x)-f_{Q,\mu}|\geq \frac{1}{2}\log^+\left(\frac{ w(x)\chi_Q(x)}{\beta_\mu^2 w_{Q,\mu}}\right).
\end{equation}
Indeed, note first that, by Jensen's inequality and Kolmogorov's inequality (see \cite[Lemma 5.16]{Duoandikoetxea2001}), 
\[
\begin{split}
f_{Q,\mu}&=\frac{1}{\mu(Q)}\int_Q \log v(x)\,\d\mu(x)=\frac{1}{\mu(Q)}\int_Q\log\left(\frac{M_\mu(w\chi_Q)(x)}{w_{Q,\mu}}\right)^{1/2}\,\d\mu(x) \\
&\leq \log\left[\frac{1}{\mu(Q)}\int_Q \left(\frac{M_\mu(w\chi_Q)(x)}{w_{Q,\mu}}\right)^{1/2}\,\d\mu(x) \right]\leq \log\left(2\|M_\mu\|_{L^1\to L^{1,\infty}}^{1/2}\right).
\end{split}
\]
Then for any $x\in L_Q=\{x\in Q:w(x)\geq \beta_\mu^2 w_{Q,\mu}\}$,  
\[
\begin{split}
f_{Q,\mu}& \leq  \log\left(2\|M_\mu\|_{L^1\to L^{1,\infty}}^{1/2}\right)\leq \log\left(\frac{(w(x)\chi_Q(x))^{1/2}}{(w_{Q,\mu})^{1/2}}\right) \\
&\leq \log\left( \frac{M(w\chi_Q)(x)^{1/2}}{(w_{Q,\mu})^{1/2}}\right)=\log v(x)=b(x)
\end{split}
\]
if $\beta_\mu$ is chosen to be equal to $2\|M_\mu\|_{L^1\to L^{1,\infty}}^{1/2}$. With this choice then one has that, for any $x\in L_Q$, 
\[
\begin{split}
|f(x)-f_{Q,\mu}|&=f(x)-f_{Q,\mu}\geq f(x)-\log \beta_\mu=\log\left(\frac{v(x)}{\beta_\mu}\right)\\
&= \frac{1}{2}\log\left[\frac{M(w\chi_Q)(x)}{\beta_\mu^2 w_{Q,\mu}}\right]\geq \frac{1}{2}\log\left[\frac{w(x)\chi_Q(x)}{\beta_\mu^2 w_{Q,\mu}}\right].
\end{split}
\]
This proves the claimed inequality \eqref{eq:claimBMOAinfty}. Use now  inequality $w(Q)<8{c_\mu} BY(Q)$  and inequality \eqref{eq:stepBMOAinfty}    together with \eqref{eq:claimBMOAinfty} in \eqref{eq:maximal_log} to get
\[
\begin{split}
&\frac{1}{\mu(Q)}\int_Q M_\mu(w\chi_Q)(x)\,\d\mu(x) \leq C(\mu)\frac{1}{\mu(Q)}\int_Q\left[1+\log^+\left(\frac{w(x)\chi_Q(x)}{w_{Q,\mu}}\right)\right]w(x)\,\d\mu(x)\\
&\leq C(\mu)\frac{w(Q)}{\mu(Q)}(1+2\log\beta_\mu)+\frac{C(\mu)}{\mu(Q)}\int_Q\log^+\left(\frac{w(x)\chi_Q(x)}{\beta_\mu^2w_{Q,\mu}}\right)w(x)\,\d\mu(x)\\
&\leq C(\mu)\frac{Y(Q)}{\mu(Q)}\left[ 8{c_\mu}(1+2\log\beta_\mu)+ 2c(\mu)\right]B,
\end{split}
\]
or, equivalently 
\[
	\frac{1}{Y(Q)}\int_Q M_\mu(w\chi_Q)(x)\,\d\mu(x)  \leq  C(\mu)\left[ 8{c_\mu}(1+2\log\beta_\mu)+ 2c(\mu)\right]B.
\]
Since the above estimate is   independent of $Q$, it has been proved that $[w]_{A_{\infty,Y}(\d\mu)}<C(\mu)\left[ 8{c_\mu}(1+2\log\beta_\mu) + 2c(\mu)\right]B$, so the desired result follows.
\end{proof}

\section*{Acknowledgements}
J. C. M.-P. is supported by the Basque Government through the BERC 2018-2021 program and by Spanish Ministry of Science, Innovation and Universities through BCAM Severo Ochoa accreditation SEV-2017-0718. He is also supported by MINECO through the MTM2017-82160-C2-1-P project funded by (AEI/FEDER, UE), acronym ``HAQMEC'', through ''la Caixa'' grant, and through the MATHROCKS project, funded by European Commission with Grant Agreement number 777778 (H2020-MSCA-RISE-2017).   He is also grateful to the people of the Universidad de Buenos Aires and the Universidad Nacional del Sur for their hospitality during his visit to Argentina in 2019.

E.R. is partially supported by grants UBACyT 20020170200057BA and PIP (CONICET)
11220110101018. This project has received funding from the European
Union's Horizon 2020 research and innovation programme under the Marie Sklodowska-Curie grant agreement No 777822.

I.P.R.-R. is partially supported by grants PIP (CONICET) 11220130100329CO and PICT 2018-02501 (Agencia I+D+i).

\bibliographystyle{amsalpha}

\providecommand{\bysame}{\leavevmode\hbox to3em{\hrulefill}\thinspace}
\providecommand{\MR}{\relax\ifhmode\unskip\space\fi MR }
\providecommand{\MRhref}[2]{%
  \href{http://www.ams.org/mathscinet-getitem?mr=#1}{#2}
}
\providecommand{\href}[2]{#2}


\begin{thebibliography}{OPRRR20}

\bibitem[CUM13]{CruzUribe2013}
David Cruz-Uribe and Kabe Moen, \emph{One and two weight norm inequalities for
  {R}iesz potentials}, Illinois J. Math. \textbf{57} (2013), no.~1, 295--323.
  \MR{3224572}

\bibitem[CUP00]{CruzUribe2000}
David Cruz-Uribe and Carlos P{\'{e}}rez, \emph{{Two-weight, Weak-type Norm
  Inequalities for Fractional Integrals, Calder\'on-Zygmund Operators and
  Commutators}}, Indiana U. Math. J. \textbf{49} (2000), no.~2, 697--721.

\bibitem[CUP02]{CruzUribe2002}
David Cruz-Uribe and Carlos P{\'{e}}rez, \emph{{On the Two-weight Problem for Singular Integral Operators}},
  Ann. Scuola. Norm.-Sci. \textbf{I} (2002), no.~5, 821--849.

\bibitem[DMRO16]{Duoandikoetxea2016}
Javier Duoandikoetxea, Francisco~J. Mart\'{\i}n-Reyes, and Sheldy Ombrosi,
  \emph{On the {$A_\infty$} conditions for general bases}, Math. Z.
  \textbf{282} (2016), no.~3-4, 955--972. \MR{3473651}

\bibitem[DZ01]{Duoandikoetxea2001}
Javier Duoandikoetxea~Zuazo, \emph{Fourier analysis}, American Mathematical
  Soc., 2001.

\bibitem[Ho14]{Ho2014}
Kwok-Pun Ho, \emph{John-nirenberg inequalities on lebesgue spaces with variable
  exponents}, Taiwanese J. Math. \textbf{18} (2014), no.~4, 1107--1118.

\bibitem[HPR12]{Hytoenen2012-2}
Tuomas~P. Hyt\"onen, Carlos P\'erez, and Ezequiel Rela, \emph{Sharp reverse
  h\"older property for $a_\infty$ weights on spaces of homogeneous type},
  Journal of Functional Analysis \textbf{263} (2012), no.~12, 3883--3899.

\bibitem[Hyt18]{Hyt}
Tuomas~P. Hyt{\"o}nen, \emph{The $l^p-to-l^q$ boundedness of commutators with
  applications to the {J}acobian operator}, 2018.

\bibitem[Iaf96]{Iaffei}
Bibiana Iaffei, \emph{Comparison of two weak versions of the {O}rlicz spaces},
  Rev. Un. Mat. Argentina \textbf{40} (1996), no.~1-2, 191--202. \MR{1450837}

\bibitem[JN61]{John1961}
Fritz John and Louis Nirenberg, \emph{On functions of bounded mean
  oscillation}, Communications on Pure and Applied Mathematics \textbf{14}
  (1961), no.~3, 415--426.

\bibitem[Jou83]{Journe1983}
Jean-Lin Journ\'e, \emph{Calder\'on-zygmund operators, pseudo-differential
  operators and the cauchy integral of calder\'on}, 1 ed., Lecture Notes in
  Mathematics, vol. 994, Springer-Verlag Berlin Heidelberg, 1983.

\bibitem[KR61]{Krasnoselskii1961}
Mark~Alexandrovich Krasnosel'skii and Yakov~Bronislavovich Rutickii,
  \emph{Convex functions and orlicz spaces}, Noordhoff, Groningen, The
  Netherlands, 1961.

\bibitem[MP20]{Martinez2020}
Javier~C. Mart\'inez-Perales, \emph{A note on generalized poincar\'e-type
  inequalities with applications to weighted improved poincar\'e-type
  inequalities}, Accepted for its publication in Ann. Acad. Sci. Fenn. Math.
  (2020).

\bibitem[OPRRR20]{Ombrosi2019}
{S}heldy {O}mbrosi, {C}arlos {P}\'erez, {E}zequiel {R}ela, and Israel~P.
  {R}ivera {R}\'ios, \emph{A note on generalized {F}ujii-{W}ilson conditions
  and $\mathrm{BMO}$ spaces}, Isr. J. Math. (2020).

\bibitem[ORSP08]{Okada2008}
Susumu Okada, Wener~J. Ricker, and Enrique~A. S\'anchez~P\'erez, \emph{Optimal
  domain and integral extension of operators}, Operator Theory: Advances and
  Applications, vol. 180, Birkh\"auser Basel, 2008.

\bibitem[P{\'e}r95]{Perez1995}
Carlos P{\'e}rez, \emph{On sufficient conditions for the boundedness of the
  hardy--littlewood maximal operator between weighted $l^p$-spaces with
  different weights}, Proceedings of the London Mathematical Society
  \textbf{s3-71} (1995), no.~1, 135--157.

\bibitem[PR19]{Perez2019}
Carlos P{\'e}rez and Ezequiel Rela, \emph{Degenerate poincar\'e-sobolev
  inequalities}, Trans. Amer. Math. Soc. \textbf{327} (2019), no.~9,
  6087--6133.

\bibitem[Ste93]{Stein1993}
Elias~M. Stein, \emph{Harmonic analysis (pms-43), volume 43}, Princeton
  University Press, Princeton, 1993.

\bibitem[Zaa67]{Zaanen1967}
Adriaan~C. Zaanen, \emph{Integration}, 2nd rev. ed. ed., North Holland,
  Amsterdam; Interscience, New York, 1967.

\end{thebibliography}

\end{document}